\documentclass[reqno,11pt]{amsart}
\usepackage{amsmath,amsxtra,amsbsy,enumerate, latexsym, amsfonts, amssymb, amsthm, amscd, stmaryrd}
\usepackage{hyperref}
\usepackage{graphics,epsf,psfrag,accents}
\usepackage{bbm, dsfont}
\usepackage{mathtools}
\usepackage{xcolor}
\usepackage{float}
\usepackage{tikz}
\usepackage[headings]{fullpage}
\usepackage{colonequals}
\usepackage{microtype}
\setlength{\oddsidemargin}{5mm}
\setlength{\evensidemargin}{5mm}
\setlength{\textwidth}{160mm}
\setlength{\headheight}{0mm}
\setlength{\headsep}{12mm}
\setlength{\topmargin}{0mm}
\setlength{\textheight}{223mm}

\setcounter{secnumdepth}{2}
\numberwithin{equation}{section}



\newcommand{\bbN}{{\ensuremath{\mathbb N}} }

\newcommand{\bbP}{{\ensuremath{\mathbb P}} }

\newcommand{\bbR}{{\ensuremath{\mathbb R}} }

\newcommand{\bbZ}{{\ensuremath{\mathbb Z}} }

\newcommand{\IBN}{{\ensuremath{\mathrm {IBN}}} }

\newcommand{\Igr}{{\ensuremath{\underline  {\rm I}_{gr}}} }


\newcommand{\ga}{\alpha}
\newcommand{\gb}{\beta}
\newcommand{\gga}{\gamma}            

\newcommand{\gep}{\varepsilon}       

\newcommand{\gl}{\lambda}
\newcommand{\gL}{\Lambda}

\newcommand{\cG}{{\ensuremath{\mathcal G}} }
\newcommand{\cO}{{\ensuremath{\mathcal O}} }

\newcommand{\cA}{{\ensuremath{\mathcal A}} }

\newcommand{\cC}{{\ensuremath{\mathcal C}} }

\newcommand{\cT}{{\ensuremath{\mathcal T}} }

\newcommand{\bP}{{\ensuremath{\mathbf P}} }

\newcommand{\cR}{{\ensuremath{\mathcal R}} }

\newcommand{\supp}{\ensuremath{\mathrm{supp} }}
\newtheorem{theorema}{Theorem}

\newcommand{\ind}{\mathbf{1}}

\newcommand{\lint}{\llbracket}
\newcommand{\rint}{\rrbracket}

\definecolor{darkred}{rgb}{0.7,0.1,0.1}

\definecolor{darkgreen}{rgb}{0.1,0.7,0.1}


\newtheorem{theorem}{Theorem}[section]
\newtheorem{lemma}[theorem]{Lemma}
\newtheorem{proposition}[theorem]{Proposition}
\newtheorem{cor}[theorem]{Corollary}
\newtheorem{rem}[theorem]{Remark}
\newtheorem{definition}[theorem]{Definition}
\newtheorem{Problem}[theorem]{Problem}
\newtheorem{Example}[theorem]{Example}
\newtheorem{question}[theorem]{Question}
\newtheorem{fact}[theorem]{Fact}

\newcommand{\RN}[1]{%
  \textup{\uppercase\expandafter{\romannumeral#1}}%
}

\makeatletter
\def\captionfont@{\footnotesize}
\def\captionheadfont@{\scshape}

\long\def\@makecaption#1#2{%
  \vspace{2mm}
  \setbox\@tempboxa\vbox{\color@setgroup
    \advance\hsize-6pc\noindent
    \captionfont@\captionheadfont@#1\@xp\@ifnotempty\@xp
        {\@cdr#2\@nil}{.\captionfont@\upshape\enspace#2}%
    \unskip\kern-6pc\par
    \global\setbox\@ne\lastbox\color@endgroup}%
  \ifhbox\@ne 
    \setbox\@ne\hbox{\unhbox\@ne\unskip\unskip\unpenalty\unkern}%
  \fi
  \ifdim\wd\@tempboxa=\z@ 
    \setbox\@ne\hbox to\columnwidth{\hss\kern-6pc\box\@ne\hss}%
  \else 
    \setbox\@ne\vbox{\unvbox\@tempboxa\parskip\z@skip
        \noindent\unhbox\@ne\advance\hsize-6pc\par}%
\fi
  \ifnum\@tempcnta<64 
    \addvspace\abovecaptionskip
    \moveright 3pc\box\@ne
  \else 
    \moveright 3pc\box\@ne
    \nobreak
    \vskip\belowcaptionskip
  \fi
\relax
}
\makeatother
\def\writefig#1 #2 #3 {\rlap{\kern #1 truecm
\raise #2 truecm \hbox{#3}}}


\makeatletter
\newsavebox{\@brx}
\newcommand{\llangle}[1][]{\savebox{\@brx}{\(\m@th{#1\langle}\)}%
  \mathopen{\copy\@brx\kern-0.5\wd\@brx\usebox{\@brx}}}
\newcommand{\rrangle}[1][]{\savebox{\@brx}{\(\m@th{#1\rangle}\)}%
  \mathclose{\copy\@brx\kern-0.5\wd\@brx\usebox{\@brx}}}
\makeatother

\title[The branching number of  intermediate growth trees]{The branching number of  intermediate growth trees}

\author[Gideon Amir]{Gideon Amir}
 \address{Gideon Amir \hfill\break
Bar-Ilan University, 5290002, Ramat Gan, Israel.}
\email{: gidi.amir@gmail.com}

\author[Shangjie Yang]{Shangjie Yang}
 \address{Shangjie Yang \hfill\break
Bar-Ilan University, 5290002, Ramat Gan, Israel.}
\email{shangjie.yang@biu.ac.il}

\keywords{Branching number, random walk, random conductance, percolation, firefighter, intermediate growth groups, permutation wreath product. \\
\textit{AMS subject classification}: 60K35, 60K37, 20E08}

\begin{document}


\begin{abstract}

We introduce an "intermediate branching number"(IBN) which captures the branching of intermediate growth trees, similar in spirit to the well-studied branching number of exponential growth trees.
We show that the IBN is the critical threshold for several random processes on trees, and analyze the IBN on some examples of interest. Our main result is an algorithm to find spherically symmetric trees with large IBN inside some permutation wreath products. We demonstrate the usefulness of these trees to the study of intermediate growth groups by using them to get the first tight bounds for the firefighter problem on some inetrmediate growth groups.

\end{abstract}

\maketitle

\section{Introduction}\label{sec:intro}

Given an infinite rooted tree, there are several ways to measure the amount of branching and structure of the tree. One obvious such measure is the volume growth (of the balls around the root). However, the growth rate does not capture many of the properties of the tree, and in particular does not capture the behavior of random walks on the tree. e.g. one can have an exponential growth tree on which simple random walk is recurrent. This motivates finding other measures. One such quantity, which proved to be very successful, is the \emph{branching number} (cf. \cite[Chapter 3]{Lyons2016trees}).  The branching number was linked to the critical values of different processes on trees such as transience/recurrence of biased random walk on trees,  percolation thresholds, and firefighting. One limitation of the branching number is that it is always $1$ for subexponential growth trees, thus it does not convey any information on them.
A step in generalizing the branching number to smaller trees was done in \cite{Collevecchio2020branching} where an analogous branching-ruin number on polynomial growth trees was introduced and studied. While it managed to capture threshold properties for several random processes on polynomial growth trees (e.g. the one-reinforced random walk), the branching-ruin number is infinite for superpolynomial growth trees. Thus there is a gap left not covered by either of these branching numbers.

In this paper we introduce a new branching number, which we call the \emph{intermediate branching number} (IBN), to capture the branching structure of intermediate growth trees (or more precisely  trees with stretched exponential growth). Its exact definition is given in section \ref{sub:IBN}. One of the motivation for studying intermediate growth trees comes from their connections to the Cayley graphs of intermediate growth groups. Even though it has been about 40 years since the introduction of the first group of intermediate growth by Grigorchuk \cite{Grigorchuk1983Milnor}, the geometry of these groups and their Cayley graphs is not well understood. Despite of lots of researches and breakthroughs in the area, many processes, which are well understood on exponential and on polynomial growth groups, remain mostly mysterious on intermediate growth groups. Thus we believe that any additional tools to study the Cayley graphs of such groups are of interest.

We begin by giving  some basic results and examples concerning intermediate growth trees and their IBN,  and analyse the IBN for the lexicographical minimal spanning tree of Nathanson's intermediate growth semigroup. We then go on to show that the IBN captures the threshold for several random processes on trees, such as random walks with  (random) conductances and percolation, similar to the connection between the branching number and the branching-ruin number on exponential and on polynomial growth trees respectively.

Our main result shows how to find "large symmetric" trees inside permutation wreath products. In other words, we prove the existence of spherically symmetric trees with a large intermediate branching number. More precisely we give a lower bound on the intermediate branching number of these trees in terms of the inverted orbit growth of the action of these groups (See section \ref{sec:tree-perwproduct} for definitions relating to permutation wreath products).
 Permutation wreath products play an important role in the construction of families of groups of intermediate growth \cite{Bartholdi2012permutational,  Bartholdi2014growth}, and in particular in the construction of intermediate growth groups with given stretched exponential growth. Our result allows us to construct spherically symmetric trees in the Cayley graphs of a large family of these groups with IBN equal to their stretched exponential growth rate. We use the trees to show that the critical threshold for the firefighting problem on these intermediate growth groups is equal to their growth rate. This constitutes the first tight threshold for the firefighting problem on intermediate growth groups.



\subsection*{Organization of the paper}
Section \ref{sec:modres} introduces our setup, notation and the definition of the intermediate branching number. Section \ref{sec:IBNmany} studies the relations between the intermediate branching number and the thresholds of certain stochastic processes on trees, such as random walks with (random) conductance and percolation. In section \ref{sec:CayleyTree} we discuss intermediate growth trees in Cayley graphs, with a focus on the lexicographic minimal spanning tree of Nathanson's semigroup example. Section \ref{sec:tree-perwproduct} is devoted to the construction of large spherically symmetric trees inside permutation wreath products. It also includes the needed background regarding permutation wreath products and inverted orbits. In section \ref{sec:fireTreeGroup} we discuss the firefighting problem in general and on intermediate growth trees and groups, and use the results from previous  sections to get the tight bounds mentioned in the introduction. Section \ref{sec:noperiodtree} is devoted to the proof of Proposition \ref{th:intersymsub}. Some of the proofs, namely the ones which we believe follow along similar lines to the exponential growth case, are given in the appendix. Appendices \ref{sec:rwphase}, \ref{sec:percolate}, \ref{sec:rwrandcond}, \ref{sec:fireIBN} are devoted to Theorems \ref{th:IBNrw}, \ref{th:percolate}, \ref{th:RWRCIBN} and \ref{th:fireintermediate}   respectively.


\section{Model and results}\label{sec:modres}

\subsection{Notation and setup}

Given a rooted graph $G=(V,E,\varrho)$ we denote by $\vert v \vert$ the distance between a vertex $v$ and the root $\rho$. When $G$ is the Cayley graph of some group we will identify $\rho$ with the identity element.

Let $\cT=(V,E, \varrho)$ be a locally finite, infinite rooted tree. For an edge $e \in E$, let $e^+$ and $e^-$ be the two end points of $e$ with $\vert e^+\vert= \vert e^-\vert+1$. That is to say, $e^+$ is  a child of $e^-$. For $\mu, \nu \in V$, let $[\mu, \nu]$ be the unique self-avoiding path connecting $\mu$ and $\nu$.
If $\mu$ is on the path $[\varrho, \nu]$, we say $\mu \le \nu$, and $\mu< \nu$  if $\mu \neq \nu$. Similarly, we define such a relation between two edges $e_1,e_2 \in E$. In addition, let $e_1 \wedge e_2$ be the common ancestor of the deepest generation of $e_1$ and $e_2$, \textit{i.e.}
\begin{equation*}
e_1 \wedge e_2 \colonequals \sup \left\{e \in E: \ e\le e_1, e\le e_2 \right\}.
\end{equation*}
Let $B_n \colonequals \left\{x \in V: \ \vert x \vert \le n \right\}$ be the set of those vertices at distance at most $n$ (in the graph distance) to $\varrho$. Moreover, set $E_n \colonequals \{ x \in V : \ \vert x\vert=n\}$ to be the set of vertices at distance exactly  $n$ to the root $ \varrho$.

We let $\bbN_0 \colonequals \bbZ \cap [0, \infty)$, and for $a,b>0$ let $\lint a, b\rint \colonequals [a,b] \cap \bbZ$. We let $\#$ denote the cardinality of a set.

\subsection{Intermediate  growth trees}
Given a rooted tree $\cT$, we define its lower and upper intermediate (stretched exponential) growth rates to be
\begin{equation}
\begin{gathered}
\underline{ \rm I}_{gr}(T):=\liminf_{n \to \infty} \log \log  \#E_n/\log n \, ; \\ \overline{\rm I}_{gr}(T):=\limsup_{n \to \infty} \log \log \# E_n/\log n.
\end{gathered}
\end{equation}
We say that a tree $\cT$ is of intermediate (stretched exponential) growth if $0<\underline{\rm I}_{gr}
(\cT)\leq \overline{\rm I}_{gr}(\cT)<1$. We say $\cT$ has intermediate growth ${\rm I}_{gr}(\cT)=\alpha$, if $\underline{\rm I}_{gr}
(\cT)\leq \overline{\rm I}_{gr}(\cT)=\ga$.
Note that $\ga$ only determines the first order of the growth rate, but there may be lower order corrections.


\medskip

 We first give what is, perhaps, the simplest example of an intermediate growth tree. Take a sequence of numbers $1,2,1,1,2,1,1,1,2,1,1,1,1,2,...$, denoted by $(a_n)_{n \in \bbN}$,  where the amount of $1$s increases by one between any two consecutive $2$s.
In other words, the sequence $(a_n)_{n \in \bbN}$ is defined as follows: for $n \ge 1$
\begin{equation}
a_n \colonequals
\begin{cases} 2 &\text{ if } n = k+\frac{(1+k)k}{2} \text{ for some } k \in \bbN,\\
1 &\text{ otherwise. }
\end{cases}
\end{equation}
We construct a tree $\cT$ such that the root $\varrho$ has only one child and every vertex $x$ with $\vert x \vert=n \ge 1$ has $a_n$ children.  
Since $$\# E_n=2^{\sum_{k=1}^{n-1} (a_k-1)},$$
we have ${\rm I}_{gr}(\cT)=1/2$.
Furthermore,
a tree is spherically symmetric if every vertex at distance $n$ to the root has the same number of children (which can depend on $n$). The example above is spherically symmetric.

\subsection{The intermediate branching number.}\label{sub:IBN}
Like in the exponential growth case, the intermediate growth rate of a tree fails to capture many of its essential properties, in particular it fails to characterize the behaviour of many random processes on the tree. In particular, it is possible to construct an example of an intermediate growth tree on which simple random walk is recurrent. (See e.g. Example \ref{example} below, which  is a variant of the famous $3-1$ tree).
Since $\Igr(\cT)$ only captures the volume growth of a tree, to characterize its structure and complexity we mimic \cite[Eq. (3.4)]{Lyons2016trees} to define the intermediate branching number.
We say that $\pi \subset E$ is a cutset if any infinite self-avoiding path connecting $\varrho$ to infinity has one unique edge in $\pi$.

\begin{definition}
 The intermediate branching number of a tree $\cT$ is
\begin{equation}\label{def:IBN}
\IBN(\cT)\colonequals \sup \left\{ \gl>0: \ \inf_{\pi \in \Pi}\sum_{e \in \pi} \exp\left(- \vert e \vert^{\gl} \right) >0 \right\}
\end{equation}
where $\Pi$ is the set of all cutsets.
\end{definition}

Note that  since $(E_n)_{n \in \bbN} \subset \Pi$, we have
\begin{equation}\label{cutgrow}
 \IBN(\cT) \le  \underline{\rm I}_{gr}(\cT).
\end{equation}
 Under additional symmetry one may hope $\IBN(\cT) = \underline{\rm I}_{gr}(\cT)$.\

\begin{fact}
For a spherically symmetric tree $\cT$, 
we have
\begin{equation}\label{eq:spherical}
\IBN(\cT)= \Igr(\cT).
\end{equation}
\end{fact}
\begin{proof}
This follows from  the max-flow min-cut theorem \cite[Theorem 3.1]{Lyons2016trees} 
  applied to the flow from the root to infinity that splits equally among the children at any vertex.
\end{proof}
Note that without the spherical symmetry requirement the equality in \eqref{eq:spherical} may fail, as can be seen by the following example:

\begin{Example}\label{example}
We take the $3-1$ tree \cite[Example 1.2]{Lyons2016trees} and
replace each edge at distance $n$  by a path of length $n$ to obtain a tree satisfying  $\Igr(\cT)=1/2$ and $\IBN(\cT)=0$.
\end{Example}

\section{The IBN as the threshold for stochastic processes on trees}
\label{sec:IBNmany}

The (exponential) branching number and the branching-ruin number are known to capture the threshold of several stochastic processes on trees. Not all these properties carry over to intermediate growth trees, but some do. In the following subsections, we demonstrate that the intermediate branching number also serves as the critical threshold for several stochastic processes.

Before going on to these results, let us give an interesting example where we do not have a satisfactory intermediate growth analog. It was shown by Lyons (cf. \cite[Theorem 3.5]{Lyons2016trees}) that for the homesick random walk (that gets a fixed bias towards the root) the branching number serves as the critical threshold for recurrence vs. transience. Collevecchio, Kious and Sidoravicius \cite[Theorem 2.1]{Collevecchio2020branching} proved that the branching-ruin number is the transience vs. recurrence threshold for the once-reinforced random walk on polynomial growth trees. This raises the following question:
\begin{Problem}
Find a "natural" random walk between the once-reinforced and the home sick random walks such that the IBN will serve as the transience vs. recurrence threshold for that walk on intermediate growth trees.
\end{Problem}

\subsection{The recurrence/transience phase transition of random walks}
In this subsection, we are concerned with the  recurrence/transience phase transition of the random walk on an intermediate growth tree $\cT=(V,E)$ with conductances given as $ c(e)=\exp(- \vert e\vert^{\gl})$ for all $e \in E$.
Similar results linking the branching number of exponential growth trees and the phase transition of biased random walks (taking $C(e)=\lambda^{-|e|}$) can be found in \cite{Lyons1990}, \cite[Theorem 3.5]{Lyons2016trees}.

We define a random walk $\textbf{X}=(X_n)_{n \ge 0}$ with $X_0=\varrho$ and for $n \ge 1$
\begin{equation}\label{def:rw}
\bbP \left[ X_n=\nu \ \vert \ X_{n-1}=\mu \right]\colonequals \frac{c([\mu, \nu]) \ind_{\{ \mu \sim \nu\}}}{\sum_{\nu': \nu' \sim \mu} c([\mu, \nu'])}
\end{equation}
where  $\mu \sim \nu$ denotes that $\mu$ and $\nu$ are connected by an edge and we allow $\mu < \nu'$ or $\mu> \nu'$.
In other words,  when $X_{n-1}=\mu$, $X_n$ picks a neighbor among all the neighbors of $\mu$ with probability proportional to the conductance. We link the phase transition of the random walk with the intermediate branching number in the following theorem.

\begin{theorem}\label{th:IBNrw}
Let $\textbf{X}=(X_n)_{n \ge 0}$ be the random walk defined above  associated with conductance $c(e)=\exp \left(- \vert e\vert^{\gl} \right)$. If $\gl>\IBN(\cT)$, $\textbf{X}$ is recurrent. If $\gl \in (0, \IBN(\cT))$, $\textbf{X}$ is transient.
\end{theorem}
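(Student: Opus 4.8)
The plan is to follow the classical electrical-network strategy used to relate the branching number of an exponential-growth tree to the recurrence/transience of biased random walk (as in \cite[Chapter 3]{Lyons2016trees}), adapted to the conductances $c(e)=\exp(-|e|^\gl)$. Recall that a reversible random walk on $\cT$ with conductances $(c(e))_{e\in E}$ is recurrent if and only if the effective resistance from $\varrho$ to infinity is infinite, equivalently (by the max-flow min-cut theorem \cite[Theorem 3.1]{Lyons2016trees} and Thomson's/the Nash-Williams criterion) there is no unit-strength flow of finite energy from $\varrho$ to infinity. So the theorem amounts to the two implications: (i) if $\gl>\IBN(\cT)$ then every flow from $\varrho$ has infinite energy (equivalently, the resistance is infinite); (ii) if $\gl<\IBN(\cT)$ then there is a flow of finite energy. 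I would handle the two regimes separately.

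For the recurrence half (i), suppose $\gl>\IBN(\cT)$. By definition of $\IBN(\cT)$ in \eqref{def:IBN}, for this $\gl$ we have $\inf_{\pi\in\Pi}\sum_{e\in\pi}\exp(-|e|^\gl)=0$, i.e.\ there are cutsets $\pi_k$ with $\sum_{e\in\pi_k}\exp(-|e|^\gl)\to 0$. The Nash-Williams inequality bounds the effective resistance below by $\sum_k \bigl(\sum_{e\in\pi_k}c(e)\bigr)^{-1}$ along disjoint cutsets; the hard point is that the $\pi_k$ as given need not be disjoint, so I would first pass to a disjoint subsequence of cutsets (by a standard surgery: given a cutset with tiny weight, one can find a cutset beyond it with still-tiny weight, since cutting off a finite portion of the tree changes the infimum by nothing), obtaining disjoint cutsets $\pi_{k_j}$ with $\sum_{e\in\pi_{k_j}}c(e)\le 2^{-j}$, whence the resistance lower bound is $\sum_j 2^j=\infty$. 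This gives recurrence.

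For the transience half (ii), suppose $0<\gl<\IBN(\cT)$. Pick $\gl'$ with $\gl<\gl'<\IBN(\cT)$; then by \eqref{def:IBN} there is $\delta>0$ with $\sum_{e\in\pi}\exp(-|e|^{\gl'})\ge\delta$ for every cutset $\pi$. By the max-flow min-cut theorem \cite[Theorem 3.1]{Lyons2016trees} applied with capacities $\tilde c(e)=\exp(-|e|^{\gl'})$, there is an admissible flow $\theta$ for these capacities with $\mathrm{Strength}(\theta)\ge\delta$, hence $0\le\theta(e)\le\exp(-|e|^{\gl'})$ for all $e$. The energy of $\theta$ with respect to the \emph{true} conductances is $\sum_e \theta(e)^2/c(e)\le\sum_e \exp(-2|e|^{\gl'})\exp(|e|^\gl)=\sum_e\exp\bigl(-2|e|^{\gl'}+|e|^\gl\bigr)$; since $\gl<\gl'$ the exponent is $\le -|e|^{\gl'}$ for $|e|$ large, and grouping edges by generation $n$ this is at most $\sum_n (\#E_n)\exp(-n^{\gl'})$, which is finite because $\gl'<\IBN(\cT)\le\Igr(\cT)$ by \eqref{cutgrow} forces the summability $\sum_n \#E_n \exp(-n^{\gl'})<\infty$ (indeed using a cutset $\pi$ contained between generations $2^m$ and $2^{m+1}$ one extracts geometric decay; alternatively one notes $\Igr(\cT)<\gl'$ would contradict the definition, and a minor refinement of \eqref{def:Igr} gives the summability). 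Normalizing $\theta$ to unit strength still yields finite energy, so the walk is transient.

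The main obstacle is the transience direction, specifically converting the pointwise capacity bound $\theta(e)\le\exp(-|e|^{\gl'})$ into a \emph{summable} energy bound: one must genuinely use the gap $\gl<\gl'$ together with a growth bound on $\#E_n$ that the hypothesis $\gl'<\IBN(\cT)$ supplies only indirectly (via \eqref{cutgrow} and the definition of $\Igr$). Care is needed because $\Igr(\cT)$ is defined with a $\liminf$, so $\sum_n \#E_n\exp(-n^{\gl'})$ need not be finite for all $\gl'<\Igr(\cT)$; the clean fix is to not use $E_n$ as the cutset but to work directly with the flow $\theta$ and bound $\sum_e\theta(e)^2/c(e)$ by $\sum_e \theta(e)\cdot\exp(-|e|^{\gl'})\exp(|e|^\gl)\le \sum_e\theta(e)\exp(-|e|^{\gl'}/2)$ for large $|e|$, then resum $\sum_e\theta(e)\,g(|e|)=\sum_n g(n)\bigl(\text{flow crossing generation }n\bigr)=\mathrm{Strength}(\theta)\sum_n g(n)\cdot(\text{something}\le 1)$ — more precisely, since the total flow through generation $n$ equals $\mathrm{Strength}(\theta)$, one gets $\sum_e\theta(e)\exp(-|e|^{\gl'}/2)=\mathrm{Strength}(\theta)\sum_n \exp(-n^{\gl'}/2)<\infty$. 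This avoids any appeal to $\#E_n$ altogether and is the step I would write out most carefully.
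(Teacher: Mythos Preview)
Your proposal is correct and follows the same route as the paper: Nash--Williams over pairwise disjoint cutsets for recurrence, and for transience the max-flow min-cut theorem with capacities $\exp(-|e|^{\gl'})$ followed by a finite-energy estimate. The paper packages your ``clean fix'' by quoting \cite[Proposition~3.4]{Lyons2016trees} with weights $w_n=\exp(-n^{\gl'}+n^{\gl})$, which is precisely the identity $\sum_e \theta(e)^2/c(e)\le \sum_e \theta(e)\,w_{|e|}=\mathrm{Strength}(\theta)\sum_n w_n<\infty$ that you wrote out by hand; your detour through $\#E_n$ is indeed unnecessary and your final argument is the right one.
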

The proof for  Theorem \ref{th:IBNrw} is adapted from \cite{Lyons2016trees}.  For the sake of completeness, we include a proof in Appendix \ref{sec:rwphase}.


\subsection{The intermediate branching number and percolation}
Given an intermediate growth tree, we consider the following percolation. We first fix $\gl \in (0,1)$.  Each edge is open independently, and for $e \in E$,
\begin{equation}\label{perc:prob}
\bbP \left[ e \text{ is open}  \right]=\exp \left(- \vert e\vert^{-1+\gl} \right).
\end{equation}
We define the critical parameter about percolation as
\begin{equation}\label{perc:threshold}
\Theta(\cT) \colonequals \sup \left\{ \gl \ge 0: \  \bbP \left[ \varrho  \leftrightarrow \infty \right]>0 \right\}.
\end{equation}

\begin{theorem}\label{th:percolate}
 In the setting of \eqref{perc:prob} and \eqref{perc:threshold},  we have
\begin{equation}
\Theta(\cT)=\IBN(\cT).
\end{equation}
\end{theorem}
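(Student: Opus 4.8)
The plan is to prove $\Theta(\cT) = \IBN(\cT)$ by establishing the two inequalities separately, using the standard second‑moment / first‑moment machinery for percolation on trees, adapted to the stretched‑exponential scale $\exp(-|e|^{\gl})$ in place of the exponential scale $\lambda^{-|e|}$ of \cite[Chapter 3]{Lyons2016trees}.

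\medskip
\textbf{Step 1: $\Theta(\cT) \ge \IBN(\cT)$ via a second moment argument.} Fix $\gl < \IBN(\cT)$; I must show $\bbP[\varrho \leftrightarrow \infty]>0$ for the percolation with $\bbP[e \text{ open}]=\exp(-|e|^{-1+\gl})$. The point is to exhibit a random nonzero flow supported on the open cluster of $\varrho$. Following the classical recipe, pick $\gl'$ with $\gl<\gl'<\IBN(\cT)$ and use that, by the definition \eqref{def:IBN}, $\inf_{\pi\in\Pi}\sum_{e\in\pi}\exp(-|e|^{\gl'})=:\delta>0$. By the max‑flow min‑cut theorem \cite[Theorem 3.1]{Lyons2016trees} applied with capacities $c_e := \exp(-|e|^{\gl'})$, there is a unit-strength admissible flow $\theta$ on $\cT$ (after normalizing by $\delta$) with $\theta(e)\le c_e/\delta$. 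Define the random mass $W := \sum_{\text{self-avoiding rays }\xi} \prod_{e\in\xi}\big(\text{something}\big)$; more concretely, set for each edge $e$ the indicator $\eta_e$ that all edges on $[\varrho,e]$ are open, and consider the random flow $\theta_\omega(e) := \theta(e)\,\eta_e / \bbP[\eta_e]$ restricted to where it remains admissible, or more cleanly use the branching‑process / energy method: let $Z_n := \sum_{e\in E_n} \theta(e)\,\ind\{[\varrho,e^-]\text{ all open}\}\,/\,\bbP[[\varrho,e^-]\text{ open}]$. One checks $\bbE Z_n = \mathrm{Strength}(\theta)$, a constant, while
\begin{equation*}
\bbE[Z_n^2] = \sum_{e,e'\in E_n} \frac{\theta(e)\theta(e')}{\bbP[e\text{ anc.\ open}]\bbP[e'\text{ anc.\ open}]}\,\bbP[\text{all ancestors of both }e,e'\text{ open}],
\end{equation*}
which collapses to $\sum_{e,e'} \theta(e)\theta(e') / \bbP[\text{ancestors of }e\wedge e'\text{ open}]$. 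The key estimate is that $1/\bbP[[\varrho,f]\text{ open}] = \exp\big(\sum_{g\le f}|g|^{-1+\gl}\big) \asymp \exp(c|f|^{\gl})$, so the double sum is controlled by $\sum_{f} \big(\sum_{e\ge f}\theta(e)\big)^2 \exp(c|f|^{\gl})$; using $\sum_{e\ge f}\theta(e)\le \theta(f)\le c_f/\delta = \exp(-|f|^{\gl'})/\delta$ and $\gl<\gl'$, the series $\sum_f \exp(c|f|^{\gl})\exp(-2|f|^{\gl'})$ is summable over generations (since per generation the number of edges $\#E_n\le\exp(\text{poly})$... here I must be careful: instead one sums over the tree using $\sum_{f\in E_m}\theta(f)\le 1$ and bounds by $\sup_{f\in E_m}\exp(c m^{\gl}-m^{\gl'})$, geometrically summable in $m$ because $\gl'>\gl$). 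Hence $\sup_n \bbE[Z_n^2]<\infty$, so by Paley–Zygmund $\liminf_n \bbP[Z_n>0]>0$; since $\{Z_n>0\}$ is decreasing and $\{Z_n>0\text{ for all }n\}\subseteq\{\varrho\leftrightarrow\infty\}$ up to the usual Fatou/martingale argument, we get $\bbP[\varrho\leftrightarrow\infty]>0$, i.e.\ $\gl\le\Theta(\cT)$.

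\medskip
\textbf{Step 2: $\Theta(\cT) \le \IBN(\cT)$ via a first moment / cutset argument.} Fix $\gl > \IBN(\cT)$; I want $\bbP[\varrho\leftrightarrow\infty]=0$. By definition \eqref{def:IBN}, for every $\gl'\in(\IBN(\cT),\gl)$ we have $\inf_{\pi\in\Pi}\sum_{e\in\pi}\exp(-|e|^{\gl'})=0$, so there is a sequence of cutsets $\pi_k$ with $\sum_{e\in\pi_k}\exp(-|e|^{\gl'})\to 0$. For any cutset $\pi$, if $\varrho\leftrightarrow\infty$ then some edge $e\in\pi$ together with all its ancestors is open, so
\begin{equation*}
\bbP[\varrho\leftrightarrow\infty] \le \sum_{e\in\pi}\bbP[[\varrho,e]\text{ all open}] = \sum_{e\in\pi}\exp\Big(-\sum_{g\le e}|g|^{-1+\gl'}\cdot\tfrac{?}{}\Big).
\end{equation*}
Wait — the percolation parameter in the event uses exponent $-1+\gl$, not $-1+\gl'$; I keep the true parameter $\gl$ and note $\sum_{g\le e}|g|^{-1+\gl}\ge c|e|^{\gl}\ge c|e|^{\gl'}$ for $|e|$ large (since $\gl>\gl'$), giving $\bbP[[\varrho,e]\text{ open}]\le \exp(-c|e|^{\gl'})\le \exp(-|e|^{\gl'})$ eventually (absorbing constants by passing to a slightly smaller exponent still above $\IBN(\cT)$). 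Thus $\bbP[\varrho\leftrightarrow\infty]\le \sum_{e\in\pi_k}\exp(-|e|^{\gl'})\to 0$, so the probability is $0$, i.e.\ $\gl\ge\Theta(\cT)$. Combining the two steps and letting $\gl'\to\gl$ appropriately yields $\Theta(\cT)=\IBN(\cT)$.

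\medskip
\textbf{Main obstacle.} The delicate point is the second‑moment bound in Step 1: converting $\bbE[Z_n^2]$ into a sum over the common‑ancestor edge $e\wedge e'$ and then showing the resulting series $\sum_{f}\big(\sum_{e\ge f}\theta(e)\big)^2\big/\bbP[[\varrho,f]\text{ open}]$ is finite. This requires two things to mesh: (i) the precise asymptotics $\sum_{g\le f}|g|^{-1+\gl}\sim \gl^{-1}|f|^{\gl}$ (an elementary but essential comparison of a sum with an integral), and (ii) the gain from choosing the flow against the strictly larger exponent $\gl'$, so that $\theta(f)^2\le \exp(-2|f|^{\gl'})$ beats $\exp(c|f|^{\gl})$ generation‑by‑generation. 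One must also handle constants and the subexponential volume growth carefully so that the per‑generation estimate is genuinely summable; this is where the flexibility in picking intermediate exponents $\gl<\gl'<\gl''<\IBN(\cT)$ (one to define $\delta$, one to build $\theta$) is used, exactly as in the exponential‑growth template but with the algebra of stretched exponentials replacing that of geometric series. The first‑moment direction is routine by comparison.
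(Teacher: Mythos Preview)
Your first–moment direction (Step~2) is essentially the paper's argument: bound $\bbP[\varrho\leftrightarrow\infty]$ by $\sum_{e\in\pi}\bbP[\varrho\leftrightarrow e^+]$, use $\sum_{i\le n}i^{-1+\gl}\asymp n^{\gl}$, and take the infimum over cutsets. No issues there (the constant in front of $|e|^{\gl}$ is harmless for the IBN, exactly as you note).

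For the lower bound $\Theta(\cT)\ge\IBN(\cT)$ you take a genuinely different route from the paper. The paper does not run a second–moment computation directly; instead it invokes Lyons's inequality \cite[Theorem~5.14]{Lyons2016trees},
\[
\bbP[\varrho\leftrightarrow\infty]\ \ge\ \frac{\cC(\varrho\leftrightarrow\infty)}{1+\cC(\varrho\leftrightarrow\infty)},
\]
with the conductances $c(e(x))=\bbP[\varrho\leftrightarrow x]/(1-\bbP[e(x)\text{ open}])$, observes that $c(e(x))\ge c\exp(-|x|^{\gl'})$ for some $\gl'\in(\gl,\IBN(\cT))$, and then appeals to the already–proved Theorem~\ref{th:IBNrw} (plus Rayleigh monotonicity) to conclude $\cC(\varrho\leftrightarrow\infty)>0$. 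In other words, the paper recycles its random–walk transience result, which was itself obtained via the energy criterion. Your direct Paley--Zygmund argument is self-contained and avoids this detour; it is of course the same mathematics underneath (Lyons's Theorem~5.14 is a packaged second–moment bound), but your version does not rely on Theorem~\ref{th:IBNrw}. The trade-off: the paper's proof is two lines given the earlier machinery, while yours reproves that machinery in situ.

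Your second–moment estimate is correct in outline, and the key step you flag as the ``main obstacle'' is indeed the heart of it: writing
\[
\bbE[Z_n^2]\ \le\ \sum_{m\ge 1}\exp(C m^{\gl})\sum_{f\in E_m}\theta(f)^2\ \le\ \sum_{m\ge 1}\exp(C m^{\gl})\,\Big(\max_{f\in E_m}\theta(f)\Big)\sum_{f\in E_m}\theta(f)\ \le\ \frac{1}{\delta}\sum_{m\ge 1}\exp\big(C m^{\gl}-m^{\gl'}\big),
\]
which converges since $\gl'>\gl$. One cosmetic point: rather than invoking a martingale limit, it is cleaner to argue that $\{Z_n>0\}\subset\{\varrho\leftrightarrow\text{level }n\}$, apply Paley--Zygmund to get a uniform lower bound on $\bbP[Z_n>0]$, and pass to the decreasing limit $\{\varrho\leftrightarrow\infty\}=\bigcap_n\{\varrho\leftrightarrow\text{level }n\}$.
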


 Theorem \ref{th:percolate} is analogous to \cite{Lyons1990}, \cite[Theorem 5.15]{Lyons2016trees} linking the branching number and the percolation threshold in exponential growth trees. We include a proof in Appendix \ref{sec:percolate} for the sake of completeness.


\subsection{Random walks with random conductance}
In \cite{Collevecchio2019branching} the authors study a random walk in heavy-tailed random conductances on polynomial growth trees. They prove a phase transition between recurrence and transience depending on the tail of the law of the conductances, and show that the branching-ruin number is the critical threshold for this transition.
Our aim in this section is to show a similar result for intermediate growth trees and show that the critical threshold for recurrence vs transience is the intermediate branching number.
We start by describing the random conductance model.

Let $(C_e)_{e \in E}$ be a sequence of i.i.d. random variables with common law $\bbP$ supported on $(0, \infty)$.
We assume there exists $\gl \in (0,1)$ such that for all $t \ge 1$
\begin{equation}\label{asm:conduct}
\bbP \left[  C_1 < \exp \left( - t^{\gl}\right)  \right]=L(t) \cdot t^{\gl-1}
\end{equation}
where $L: [1,\infty) \mapsto \bbR_+$ is a slowly varying function, \textit{ i.e.} for all $a>0$
$$ \lim_{t \to \infty}  \frac{L(at)}{L(t)}=1.$$
In other words, the random conductance $C_e$ has a heavy tail at $0$. Given a realization of $(C_e)_{e \in E}$, we define the random walk $\textbf{X}=(X_n)_{n  \ge 0}$ with  conductance $(C_e)_{e \in E}$ exactly the same as \eqref{def:rw}. From now on, we abbreviate $\textbf{X}=(X_n)_{n  \ge 0}$, the random walk on random conductance, as RWRC.

\begin{theorem}\label{th:RWRCIBN}
If $\gl> \ga=\IBN(\cT)$, then RWRC 
is recurrent almost surely. If $\gl< \ga=\IBN(\cT)$, then RWRC is transient a.s. 
\end{theorem}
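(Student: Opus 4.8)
The plan is to reduce the RWRC statement to the deterministic conductance result of Theorem \ref{th:IBNrw} via a Borel--Cantelli / comparison argument, exploiting the effective-resistance characterization of recurrence and transience. Recall that the random walk with conductances $(C_e)$ is recurrent iff the effective resistance from $\varrho$ to $\infty$ is infinite, and transient iff it is finite; equivalently, by Thomson's principle and the max-flow min-cut theorem \cite[Theorem 3.1, Theorem 3.2]{Lyons2016trees}, transience is equivalent to the existence of a unit flow of finite energy, and recurrence to the vanishing of the max-flow under the capacity constraints $\theta(e)\le C_e$ for every cutset. So the whole argument will be about comparing the random capacities $C_e$ with the deterministic profile $\exp(-|e|^{\gl})$.

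First I would handle the \textbf{recurrent regime} $\gl > \ga = \IBN(\cT)$. Fix $\gl'$ with $\ga < \gl' < \gl$. The idea: show that almost surely, for all but finitely many edges $e$, one has $C_e \le \exp(-|e|^{\gl'})$ — or more precisely, that this fails for only ``few'' edges in a quantitatively controlled way. From \eqref{asm:conduct}, $\bbP[C_e > \exp(-t^{\gl'})] $ is not summable over $t$, so a naive first-moment bound over all edges fails; instead one must use that the number of edges at level $n$ is $\exp(n^{\ga + o(1)})$ while $\bbP[C_e > \exp(-n^{\gl'})] = \bbP[C_1 \ge \exp(-n^{\gl'})]$, and by \eqref{asm:conduct} applied with exponent effectively $\gl$ (note the tail is governed by the \emph{true} parameter $\gl$ in the law, and $\exp(-n^{\gl'}) \gg \exp(-n^{\gl})$ for large $n$ since $\gl' < \gl$, so this probability is close to $1$, which is the wrong direction). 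The correct route is the opposite: set $\gl''$ with $\gl < \gl''$... — actually the clean statement is that for the law \eqref{asm:conduct}, $\bbP[C_1 < \exp(-t^{\gl})] \sim L(t) t^{\gl - 1} \to 0$, so \emph{large} conductances (close to $1$) are typical and genuinely small conductances are rare. Hence the obstruction to recurrence is a sparse set of edges with $C_e$ of order $1$. The key computation is: given a cutset $\pi$, $\sum_{e \in \pi} C_e \ge \sum_{e \in \pi} \exp(-|e|^\gl) \cdot \ind_{\{C_e \ge \exp(-|e|^\gl)\}}$, and one shows via a second-moment or large-deviations argument on each level that deleting the rare small-conductance edges leaves a subtree $\cT'$ with $\IBN(\cT') = \IBN(\cT)$ almost surely (this is where sub-exponential sparsity of bad edges enters), on which the deterministic comparison with $\exp(-|e|^{\gl})$ and Theorem \ref{th:IBNrw} (applied with parameter $> \IBN(\cT')$) gives recurrence.

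For the \textbf{transient regime} $\gl < \ga = \IBN(\cT)$, I would build an explicit admissible flow of finite energy. Fix $\gl'$ with $\gl < \gl' < \ga \le \Igr(\cT)$; by definition of $\IBN$ there is a sequence of cutsets along which $\sum_{e \in \pi} \exp(-|e|^{\gl'})$ stays bounded below, equivalently (max-flow min-cut) there is a deterministic admissible flow $\theta_0$ with $\theta_0(e) \le \exp(-|e|^{\gl'})$ and positive strength. Now on a typical realization most edges have $C_e$ much larger than $\exp(-|e|^{\gl'})$: indeed $\bbP[C_e < \exp(-|e|^{\gl'})] = L(|e|)|e|^{\gl'-1}$... no — here one needs $\bbP[C_e < \exp(-|e|^{\gl'})]$ with $\gl' > \gl$, i.e. $\exp(-|e|^{\gl'}) < \exp(-|e|^{\gl})$, so this probability is at most $\bbP[C_1 < \exp(-|e|^{\gl})] = L(|e|)|e|^{\gl-1}$, which \emph{is} summable when multiplied against the level sizes? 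No: level sizes are $\exp(n^{\ga+o(1)})$, which dominates $n^{\gl-1}$. So again a direct union bound fails, and the honest approach is to \emph{thin} the flow: reroute $\theta_0$ around the (random, sparse) set of edges where $C_e < \exp(-|e|^{\gl'})$. One shows this bad set has density decaying fast enough down each ray that a positive-strength sub-flow survives with finite energy $\sum_e \theta(e)^2 / C_e \le \sum_e \theta(e)^2 \exp(|e|^{\gl'}) < \infty$ (the last sum finite because $\theta(e) \le \exp(-|e|^{\gl'})$ forces $\theta(e)^2 \exp(|e|^{\gl'}) \le \exp(-|e|^{\gl'})$, summable over a cutset-structured tree when $\gl' < \Igr$). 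By the energy/flow criterion \cite[Theorem 3.2]{Lyons2016trees}, RWRC is transient a.s.

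\textbf{Main obstacle.} The crux in both directions is the same: the number of edges per level, $\exp(n^{\ga+o(1)})$, overwhelms the per-edge tail probabilities $L(n) n^{\gl-1}$, so naive Borel--Cantelli over edges is useless. The real work is a multiscale / percolation-style argument controlling the geometry of the random ``bad-edge'' set along cutsets — showing that even though bad edges are numerous globally, along any single ray (and more delicately, along the cutsets that witness $\IBN$) they are sparse enough not to change the branching number, uniformly. This is the analogue of the heavy-tailed-conductance analysis in \cite{Collevecchio2019branching}, and adapting their block/cutset decomposition to the stretched-exponential scale — keeping the slowly varying $L$ harmless throughout — is where the genuine difficulty lies.
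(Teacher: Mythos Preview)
Your proposal has a genuine conceptual gap in the recurrence half and a quantitative gap in the transience half; in both cases the missing idea is the one the paper actually uses.

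\textbf{Recurrence.} You write that ``large conductances (close to $1$) are typical'' and therefore ``the obstruction to recurrence is a sparse set of edges with $C_e$ of order $1$''. This is backwards: large conductances are precisely what \emph{prevents} recurrence (if $C_e\equiv 1$ the walk is simple random walk on a branching tree, hence transient). Your subsequent plan --- delete the rare small-conductance edges to get $\cT'$ with $\IBN(\cT')=\IBN(\cT)$, then invoke Theorem~\ref{th:IBNrw} --- fails by Rayleigh monotonicity in the wrong direction: on $\cT'$ the true conductances satisfy $C_e\ge \exp(-|e|^{\gl})$, so the RWRC on $\cT'$ is \emph{more} transient than the deterministic walk with $c(e)=\exp(-|e|^{\gl})$, and recurrence of the latter tells you nothing. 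The paper's route is different in kind: it works not with $C_e$ but with the path quantity $\Psi_{RC}(e)=\bigl(\sum_{g\le e}C_g^{-1}\bigr)^{-1}$, which is controlled by the \emph{minimum} of $C_g$ along the ray. Since each $g\le e$ independently satisfies $C_g<\exp(-|e|^{\gl})$ with probability $L(|e|)|e|^{\gl-1}$, the probability that \emph{none} of the $|e|$ edges does is at most $\exp(-L(|e|)|e|^{\gl})$, a stretched-exponential bound. Summing this over a sequence of cutsets $\pi_n$ witnessing $\IBN(\cT)<\gl$ and applying Borel--Cantelli gives $\Psi_{RC}(e)\le\exp(-|e|^{\gl})$ for all $e\in\pi_n$ and all large $n$, whence $RT(\cT,\psi_{RC})<1$ and recurrence via the Collevecchio--Kious--Sidoravicius criterion (Proposition~\ref{prop:CKSthm}). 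The point you missed is that the \emph{rare} small-conductance edges are exactly what drives recurrence, because one such edge per ray suffices.

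\textbf{Transience.} Your flow argument stalls at the energy estimate. With $\theta(e)\le\exp(-|e|^{\gl'})$ and $C_e\ge\exp(-|e|^{\gl'})$ on ``good'' edges you get $\theta(e)^2/C_e\le\exp(-|e|^{\gl'})$, but $\sum_{e\in E}\exp(-|e|^{\gl'})=\sum_n \#E_n\,\exp(-n^{\gl'})$ diverges because $\#E_n=\exp(n^{\ga+o(1)})$ and $\gl'<\ga$. So the energy is not finite and the flow does not certify transience as written. (The deterministic Theorem~\ref{th:IBNrw} avoids this via the auxiliary weights $w_n$ and \cite[Proposition~3.4]{Lyons2016trees}, which is more than a raw energy bound.) The paper again goes through $RT(\cT,\psi_{RC})$: it declares $e$ open when $C_g\ge\exp(-|g|^{\gl})$ for all $g\le e$, checks this percolation is (in fact exactly) quasi-independent with $1-\psi_C(e)=L(|e|)|e|^{\gl-1}\le C|e|^{-(1-\ga+\gep)}$, and then invokes a general lemma (Proposition~\ref{prop:prepercolate}) to conclude $\IBN(\cC(\varrho))\ge \ga-\gep$ with positive probability. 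On the open cluster one has $\sum_{g\le e}C_g^{-1}\le |e|\exp(|e|^{\gl})$, which gives $RT(\cT,\psi_{RC})>1$ and hence transience; Kolmogorov's $0$--$1$ law upgrades ``positive probability'' to ``almost surely''. Your ``thin the flow around bad edges'' heuristic is pointing at this percolation picture, but the mechanism that makes it work is the quasi-independent percolation machinery from \cite{Collevecchio2019branching,Collevecchio2020branching}, not an energy computation.
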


 We follow the blueprint in \cite{Collevecchio2019branching} to prove Theorem \ref{th:RWRCIBN}, which is presented in Appendix \ref{sec:rwrandcond}.

\section{Intermediate growth trees and Cayley graphs}\label{sec:CayleyTree}

In the next two sections we study the intermediate branching number of several families of trees inside Cayley graphs of intermediate growth groups and semigroups.
The motivation is twofold. On the one hand, such trees would provide interesting examples and may inherit some properties from the Cayley graphs. On the other hand, we hope that finding good trees inside the Cayley graphs would provide tools to  understand better the geometry of these graphs and their underlying groups.

\subsection{Lexicographical minimal spanning trees}
One natural family of trees of intermediate growth are the \textit{lexicographical minimal spanning trees}(LMST) of Cayley graphs of intermediate growth. 
Such trees were studied for exponential growth groups (see \cite{Lyons1995walks,Lyons2016trees}). Since the vertices at distance $n$ from the root in the LMST is simply the set of group elements of length $n$, the growth rate of the LMST is simply the growth rate of the group (w.r.t. the same generating set).
It is interesting to note that unlike the exponential growth case, there are examples groups of "oscillating" intermediate growth (e.g. in \cite[Theorem 2.3]{Kassabov2013osilating},   \cite{Bartholdi2014growth},    
 \cite{Brieussel2014Growth}) for which the  LMST satisfies $\underline{\rm I}_{gr}(\cT)< \overline{\rm I}_{gr}(\cT)$. In addition,
one important structural properties of the lexicographical minimal spanning tree is that it is  \emph{sub-periodic}.

 \begin{definition}
For a given tree $\cT=(V,E)$ and $x\in V$, let $\cT_x$ be the subtree consisting of $x$ and all descendants of $x$.
We say that  $\cT$ is $M$-sub-periodic if for all $x \in V$ there exists an injective map $f: \cT_x \mapsto \cT$ satisfying $\vert f(x) \vert \le M$ and keeping the adjacency of the vertices. If there exists one such $M \in \bbN_0$, we say that $\cT$ is sub-periodic.
\end{definition}
In fact, it is straightforward to see that any LMST is $0$-sub-periodic.

\begin{rem}
While it is easy to come up with examples of sub-periodic spherically symmetric exponential growth trees (e.g. binary tree), it turns out one cannot have the same for intermediate growth trees. That is the following proposition, whose proof is given in Section \ref{sec:noperiodtree}.
\begin{proposition}\label{th:intersymsub}
There is no spherically symmetric,   sub-periodic and subexponential growth tree, \textit{i.e.}
\begin{equation}\label{assum:subexp}
 \lim_{n \to \infty} \frac{\log \# E_n}{n}=0,
\end{equation}
and satisfying $\lim_{n \to \infty} \# E_n=\infty.$
\end{proposition}
\end{rem}

For sub-periodic exponential growth trees,  a theorem of Furstenberg (\cite{Furstenberg67},\cite[Theorem 3.8]{Lyons2016trees}) shows that the branching number is equal to the growth rate.
This implies that for exponential growth groups, the LMST has branching number equal to the group's growth rate.
However, this line of proof no longer works for intermediate growth trees, as was proved recently by Tang:
\begin{theorem}[{\cite[Theorem 1.2]{Tang2022branching}}]\label{th:tang}
There exists a sub-periodic tree of intermediate growth $T$ for which $\IBN(\cT)<\underline{\rm I}_{gr}(\cT)$.
\end{theorem}
Note that Tang's exmaple is not an LMST, but rather some amalgamation of "shifts" of the "3-1" tree.
Furthermore,
in general we do not know how to calculate the IBN for LMST of intermediate growth groups. In particular, we ask the following question:
\begin{question}
Is it true that $\IBN(\cT)=\underline{\rm I}_{gr}(T)$ for any ${\rm LMST}$\ of an intermediate growth group? What about the ${\rm LMST}$ of Grigorchuk's group?
\end{question}

\subsection{Nathanson's semigroup example}

Since Cayley graphs of intermediate growth groups (and their LMST) are very hard to understand, we analyze in this section the LMST of Nathanson's intermediate growth semigroup.
%
%
%
 In \cite{Nathanson1999intermediate}, Nathanson provided an interesting example of a semigroup of intermediate growth: a semigroup of $2 \times 2 $ matrices generated by the set $A=\{a,b \}$ w.r.t. multiplication where
\begin{equation}\label{gen:semigr}
a \colonequals \begin{pmatrix}
1 & 1\\
0 & 1
\end{pmatrix}
\quad
\text{ and }
\quad
b \colonequals \begin{pmatrix}
1 & 0\\
1 & 0
\end{pmatrix}.
\end{equation}
Its volume growth is given in the following theorem.
 \begin{theorema}  {\cite[Theorem 1]{Nathanson1999intermediate}}\label{th:Nathanson}
  Let $S$ be the semigroup generated by $\{ a,b\}$ defined in \eqref{gen:semigr}.
Then there exists a constant $c>0$ such that for all $n$ sufficiently large,
\begin{equation}
2^{c \sqrt{n}/\log n}  \le  \# B_n \le 2 n^{2 \sqrt{n}+2}.
\end{equation}
 \end{theorema}
We define a lexicographical order on $S$ by setting $b<a$ to obtain its lexicographic minimal spanning tree as in Figure \ref{fig:semi}.
There are three types of words in $E_n$: (1) $a^n$; (2) $a^i b a^{j}$  with  $i,j \in \bbN_0 \text{ and } i+j=n-1$; and (3)
\begin{equation}\label{fig:princ}
 a^i \underbrace{ b a^{p_1-1}  \cdots   ba^{p_1-1}}_{r_1 \text{ copies }}  \cdots \underbrace{b a^{p_k-1}  \cdots ba^{p_k-1}}_{r_k \text{ copies }} b a^j
\end{equation}
  where
  $i+j+1+\sum_{\ell=1}^k r_{\ell}p_{\ell} =n$  and  $p_1<p_2< \cdots <p_k \text{ are primes}$. All these three types of words are written in the lexicographic minimal sense. For two words $g \in E_n$, $h \in E_{n+1}$ written in the forms stated above, there is an edge linking $g$ with $h$ if $h=ga$ or $h=gb$.
 The figures (M) and (R) in Figure \ref{fig:semi} explain the rule in \eqref{fig:princ}.
The following lemma concerns the size of sphere at distance $n$.

 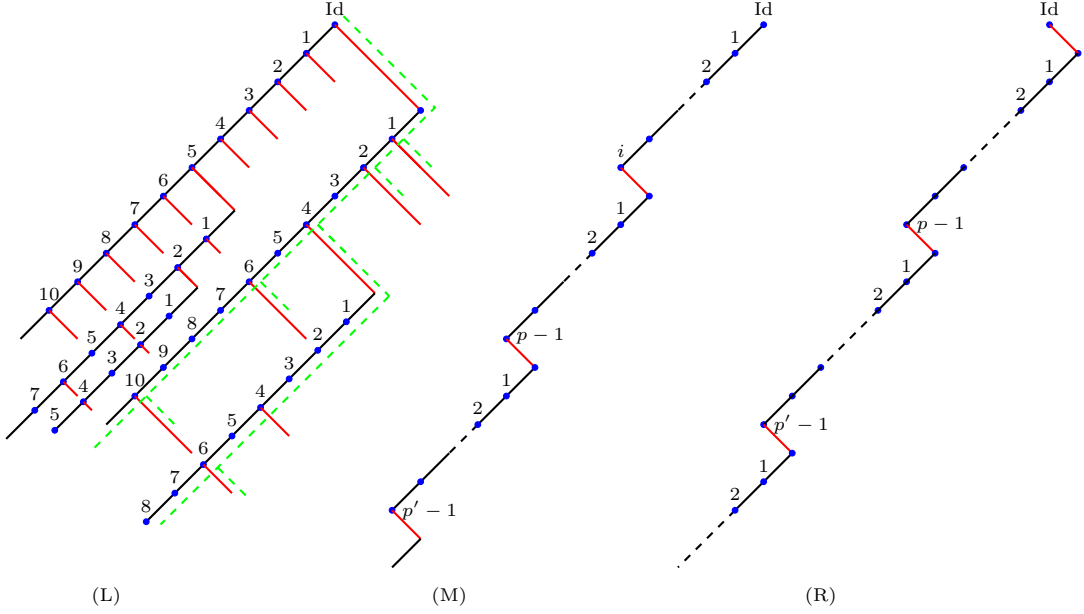
\begin{figure}[h]
 \centering
   \begin{tikzpicture}[scale=.38,font=\tiny]
   \node[above] at (0,0) {$\mathrm{Id}$};
    \draw[thick] (0,0) -- (-11,-11);
     \foreach \x in {0,-1,-2,...,-10} {\draw[fill,blue] (\x,\x) circle [radius=0.1];}
     \foreach \x in {1,2,...,10} { \node[above] at (-\x,-\x) {\x}; }
          \foreach \x in {1,2,...,10} { \draw[thick,red] (-\x,-\x) -- (-\x+1,-\x-1); }

   \draw[thick,red] (-5,-5) -- (-5+1.5,-5-1.5);
    \draw[thick]  (-5+1.5,-5-1.5)-- (-5+1.5-8,-5-1.5-8);
   \foreach \x in {1,2,...,7} { \node[above] at (-5+1.5-\x,-5-1.5-\x) {\x}; }
   \foreach \x in {1,2,...,7} { \draw[fill,blue](-5+1.5-\x,-5-1.5-\x)  circle [radius=0.1]; }
   \foreach \x in {1,2,4,6} { \draw[red,thick](-5+1.5-\x,-5-1.5-\x)--(-5+1.5-\x+0.5,-5-1.5-\x-0.5); }
   \draw[red,thick](-5+1.5-2,-5-1.5-2)--(-5+1.5-2+0.7,-5-1.5-2-0.7);
   \draw[thick](-5+1.5-2+0.7,-5-1.5-2-0.7)--(-5+1.5-2+0.7-5,-5-1.5-2-0.7-5);
    \foreach \x in {1,2,...,5} { \node[above] at (-5+1.5-2+0.7-\x,-5-1.5-2-0.7-\x) {\x}; }
       \foreach \x in {1,2,...,5} { \draw[fill,blue] (-5+1.5-2+0.7-\x,-5-1.5-2-0.7-\x) circle [radius=0.1]; }
    \foreach \x in {2,4} { \draw[red,thick] (-5+1.5-2+0.7-\x,-5-1.5-2-0.7-\x)--(-5+1.5-2+0.7-\x+0.3,-5-1.5-2-0.7-\x-0.3); }

     \draw[thick,red] (0,0) -- (1+1+1,-1-1-1);
     \draw[thick]  (1+1+1,-1-1-1)--(1-10+1-1+1,-1-10-1-1-1);
   \foreach \x in {0,-1,-2,...,-10} {\draw[fill,blue] (1+1+1+\x,-1+\x-1-1) circle [radius=0.1];}
        \foreach \x in {1,2,...,10} { \node[above] at (1+2-\x,-\x-2-1) {\x}; }

   \draw[thick,red] (1+1,-3-1) -- (3,-5);
      \draw[thick,red] (1+1,-3-1) -- (3+1,-5-1);
       \draw[thick,red] (0+1,-4-1) -- (2+1,-6-1);
       \draw[thick,red] (0-2+1,-4-2-1) -- (2-2+1+0.4,-6-2-1-0.4);
       \draw[thick,red] (0-4+1,-4-4-1) -- (2-4+1,-6-4-1);
       \draw[thick,red] (0-8+1,-4-8-1) -- (2-8+1,-6-8-1);

   \draw[thick](2-2+1+0.4,-6-2-1-0.4)--(2-2+1+0.4-8,-6-2-1-0.4-8);
   \foreach \x in {1,2,...,8} { \node[above] at (2-2+1+0.4-\x,-6-2-1-0.4-\x) {\x}; }
   \foreach \x in {1,2,...,8} { \draw[fill,blue]  (2-2+1+0.4-\x,-6-2-1-0.4-\x)circle [radius=0.1]; }
 \draw[thick,red](2-2+1+0.4-4,-6-2-1-0.4-4)--(2-2+1+0.4-3,-6-2-1-0.4-5);
     \draw[thick,red](2-2+1+0.4-4-2,-6-2-1-0.4-4-2)--(2-2+1+0.4-3-2,-6-2-1-0.4-5-2);

    \node at (-8,-20){(L)};
    \node at (4,-20){(M)};
    \node at (17,-20){(R)};

\draw[green,thick,dashed](0+0.3,0+0.3)--(2+1+0.3+0.2,-2-1+0.3-0.2)--(2+1+0.3-7+0.2-5,-2-1+0.3-7-0.2-5);
 \foreach \x in {1,2,4,6,10} { \draw[green,thick,dashed](2+1+0.3+0.1-\x,-2-1+0.3-0.3-\x)--(2+1+0.3+0.1-\x+1,-2-1+0.3-0.3-\x-1); }
\draw[green,thick,dashed](2+1+0.3+0.1-4,-2-1+0.3-0.3-4)--(2+1+0.3+0.1-4+2+0.5,-2-1+0.3-0.3-4-2-0.5);
  \draw[green,thick,dashed](2+1+0.3+0.1-4+2+0.5,-2-1+0.3-0.3-4-2-0.5)--(2+1+0.3+0.1-4+2+0.5-8,-2-1+0.3-0.3-4-2-0.5-8);
  \draw[green,dashed,thick](2+1+0.3+0.1-4+2+0.5-6,-2-1+0.3-0.3-4-2-0.5-6)--(2+1+0.3+0.1-4+2+0.5-6+1,-2-1+0.3-0.3-4-2-0.5-6-1);

 \draw[thick] (15,0) --(13,-2);
\draw[thick,dashed] (13,-2)--(12,-3);
 \draw[thick]  (11+1,-4+1)-- (5+4+1,-6+1);
 \foreach \x in {0,1,2,4,5} { \draw[fill,blue]  (15-\x,-\x)circle [radius=0.1]; }
\node[above] at(15,0){Id};
\node[above] at(14,-1){$1$}; \node[above] at(13,-2){$2$};\node[above] at(15-6+1,-6+1){$i$};
\draw[thick,red] (15-6+1,-6+1)--(10+1,-7+1);

\draw[thick] (10+1,-7+1)--(10-3+1+1,-7-3+1+1);
\draw[thick,dashed] (10-3+1+1,-7-3+1+1)--(10-5+1+2,-7-5+1+2);
\draw[thick] (10-5+1+1+1,-7-5+1+1+1)--(10-7+1+1+1,-7-7+1+1+1);
  \foreach \x in {0,1,2,4,5} { \draw[fill,blue]  (1+10-\x,1-7-\x)circle [radius=0.1]; }
\node[above] at(10,-7){$1$};\node[above] at(10-1,-7-1){$2$}; \node[right] at(10-6+1+1,-7-6+0.1+2){$p-1$};
\draw[red,thick](10-7+1+2,-7-7+1+2)--(10-7+2+2,-7-7+2);
\draw[thick](10-7+2+2,-7-7+2)--(10-7+2,-7-7);
\draw[thick,dashed](10-7+2,-7-7)--(10-7+1,-7-7-1);
\draw[thick](10-7+1,-7-7-1)--(10-7-1,-7-7-3);
  \foreach \x in {0,1,2,4,5} { \draw[fill,blue]  (7-\x,-12-\x)circle [radius=0.1]; }
\node[above] at(7-1,-12-1){$1$};\node[above] at(7-2,-12-2){$2$}; \node[right] at(7-5,-12-5){$p'-1$};
\draw[red,thick](7-5,-12-5)--(7-5+1,-12-5-1);
\draw[thick](7-5+1,-12-5-1)--(7-5,-12-5-2);


 \node[above] at(25,0){Id};
 \draw[fill,blue]  (25,0)circle [radius=0.1];
    \foreach \x in {0,1,2,4,5,6} { \draw[fill,blue]  (26-\x,-1-\x)circle [radius=0.1]; }
    \foreach \x in {1,2} { \node[above] at (26-\x,-1-\x){\x}; }
\node[right] at (26-6,-1-6){$p-1$};

    \draw[thick,red] (25,0) -- (26,-1);
    \draw[thick]  (26,-1)--(26-2,-1-2);
    \draw[thick,dashed]   (26-2,-1-2)--(26-4,-1-4);
    \draw[thick]  (26-4,-1-4)--(26-6,-1-6);

    \draw[thick,red]  (26-6,-1-6)--(26-5,-1-7);
    \foreach \x in {0,1,2,4,5,6} { \draw[fill,blue]  (26-5-\x,-1-7-\x)circle [radius=0.1]; }
    \foreach \x in {1,2} { \node[above] at (26-5-\x,-1-7-\x){\x}; }
    \node[right] at (26-5-6,-1-7-6){$p'-1$};

   \draw[thick]  (26-5,-1-7)--(26-7,-1-9);
   \draw[thick,dashed]  (26-7,-1-9)--(26-9,-1-11);
   \draw[thick]  (26-9,-1-11)--(26-11,-1-13);

   \draw[thick,red]  (26-11,-1-13)--(26-10,-1-14);

    \draw[thick] (26-10,-1-14)--(26-10-2,-1-14-2);
    \draw[thick,dashed] (26-10-2,-1-14-2)--(26-10-4,-1-14-4);
     \foreach \x in {0,1,2} { \draw[fill,blue]  (26-10-\x,-1-14-\x)circle [radius=0.1]; }
    \foreach \x in {1,2} { \node[above] at (26-10-\x,-1-14-\x){\x}; }

   \end{tikzpicture}

  \caption{ A graphical explanation: a black edge represents $a$ while a red edge represents $b$ with a blue dot standing for a vertex. (L): the general picture of the lexicographic minimal spanning tree (note that not all the branches are shown.), and the green dashed lines depicts where the flows splits; (M): A generic branch of those vertices of the form $a^i b a^{p-1}ba^{p'-1}b \cdots$ with $p \le p'$ being primes; (R): A generic branch of those vertices of the form $ b a^{p-1}ba^{p'-1}b \cdots$ with $p \le p'$ being primes.
   }\label{fig:semi}
 \end{figure}

\begin{lemma}\label{cor:Nathanson}
 Recalling $E_n= \{ v \in V: \ \vert v \vert=n\}$, we have
\begin{equation}
\lim_{n \to \infty} \frac{\log \log \# E_n }{\log n}
= \frac{1}{2}.
\end{equation}
\end{lemma}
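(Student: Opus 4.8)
## Proof Proposal for Lemma \ref{cor:Nathanson}

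The plan is to translate Nathanson's volume bound (Theorem \ref{th:Nathanson}) into a statement about the sphere sizes $\# E_n$ of the lexicographic minimal spanning tree $\cT$ of the semigroup $S$, and then extract the limit of $\log\log \# E_n / \log n$. First I would recall the elementary sandwich $\# E_n \le \# B(n) \le \sum_{k=0}^{n} \# E_k \le (n+1) \max_{0 \le k \le n} \# E_k$. The upper direction is immediate: from Theorem \ref{th:Nathanson}, $\# E_n \le \# B(n) \le 2 n^{2\sqrt n + 2}$, so $\log \# E_n \le C \sqrt n \log n$ and hence $\limsup_n \log\log \# E_n / \log n \le 1/2$. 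The work is all in the lower bound $\liminf_n \log\log \# E_n/\log n \ge 1/2$.

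For the lower bound I would argue that the growth is "almost monotone" along spheres, so that the lower bound $2^{c\sqrt n / \log n} \le \# B(n)$ can be pushed onto a single sphere $E_m$ with $m$ comparable to $n$. Concretely: since $\# B(n) \le (n+1)\max_{0\le k\le n}\# E_k$, there is some $m = m(n) \le n$ with $\# E_m \ge \# B(n)/(n+1) \ge 2^{c\sqrt n/\log n}/(n+1)$. This gives $\log\log \# E_m \ge (1/2)\log n - O(\log\log n)$, while $\log m \le \log n$, so $\log\log \# E_m/\log m \ge 1/2 - o(1)$ — but only along the subsequence $m(n)$, which a priori could be sparse. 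To upgrade this to a genuine $\liminf$ over all $n$, I would use a structural monotonicity property of this particular tree: because the tree is built by appending generators and $a$ has infinite order (the branches in Figure \ref{fig:semi} are infinite rays with occasional $b$-branchings), one has $\# E_{n+1} \ge \# E_n$, or at least $\# E_n$ is non-decreasing up to bounded multiplicative factors. Granting $\# E_{n}$ non-decreasing (which I would justify from the explicit description of $\cT$ in this section — every vertex has at least one child along the $a$-ray, so spheres don't shrink once a branching has occurred, modulo the initial segment), the subsequence bound immediately propagates: for $n \ge m(n_0)$ we get $\# E_n \ge \# E_{m(n_0)}$ for an appropriate choice, yielding $\liminf_n \log\log\# E_n/\log n \ge 1/2$.

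The main obstacle is precisely this monotonicity / propagation step: Theorem \ref{th:Nathanson} only controls balls, and a priori $\# E_n$ could oscillate, with most of the volume concentrated on a few large spheres. Resolving it requires either (i) a clean combinatorial argument that $n \mapsto \# E_n$ is non-decreasing for this tree — plausible since each element $s \in S$ with $|s| = n$ has the child $sa$ with $|sa| = n+1$, giving an injection $E_n \hookrightarrow E_{n+1}$, which is exactly what is needed — or (ii) a finer version of Nathanson's counting that directly lower-bounds the number of distinct matrix products of length exactly $n$. I expect route (i) to work: one must only check that $|sa| = |s| + 1$ in the word metric on $S$, i.e. that appending $a$ never creates a shortcut, which follows because $a$ is the lexicographically-largest generator and the spanning tree is geodesic, so the tree-distance equals the graph-distance and $sa$ is genuinely at level $n+1$. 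With that injection in hand, $\# E_n \le \# E_{n+1}$ holds for all $n$, the subsequence estimate propagates, and combining with the upper bound gives $\lim_n \log\log\# E_n/\log n = 1/2$.
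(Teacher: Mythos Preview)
Your overall strategy is sound and reaches the same conclusion as the paper, but the organization of the lower bound differs. The paper does not invoke Theorem~\ref{th:Nathanson} as a black box there; it re-enters Nathanson's construction, writes down the explicit family $ba^{p_1-1}b\cdots ba^{p_r-1}b$ with distinct primes $p_i\le\sqrt n$, pigeonholes on their exact length to locate some $m_0\le n-1$ carrying at least $2^{c'\sqrt n/\log n}/n$ of them, and then \emph{left}-pads with $a^{n-m_0-1}$ to land in $E_n$. You instead combine the ball bound with a monotonicity $\#E_n\le\#E_{n+1}$ coming from \emph{right}-padding $s\mapsto sa$. Both routes rest on the same underlying mechanism---multiplication by $a$ raises word length by exactly one---so the difference is mostly packaging; your version is a bit more conceptual, the paper's is more explicit.

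There is, however, a genuine gap in your justification of the key step $|sa|=|s|+1$. The reason you offer (``$a$ is the lexicographically-largest generator and the spanning tree is geodesic'') does not establish this: being lex-largest plays no role, and geodesicity of the tree only says tree-distance equals Cayley-distance, not that $sa$ sits one level below $s$. The correct argument is algebraic. First, $a$ is invertible as a matrix, so $sa=s'a$ forces $s=s'$, which gives injectivity. Second, any element of the form $tb$ has second column zero, whereas the second column of $sa$ is the sum of the two columns of $s$, which is nonzero for every $s\in S$ (the $(1,1)$-entry of any product of $a$'s and $b$'s is at least $1$). Hence $sa$ is never of the form $tb$, so every word representing $sa$ ends in $a$; stripping that final $a$ gives a word for $s$, and therefore $|sa|=|s|+1$. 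With this repair your monotonicity argument goes through and the proof is complete.
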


\begin{proof}
By Theorem \ref{th:Nathanson}, we have
\begin{equation}
\limsup_{n \to \infty} \frac{\log \log \# E_n}{\log n} \le \limsup_{n \to \infty} \frac{\log \log \# B_n}{\log n} = \frac{1}{2}.
\end{equation}

Concerning the lower bound on $\liminf_{n \to \infty} \frac{\log \log \# E_n}{\log n}$,
it is a consequence of the proof in \cite[Theorem 1]{Nathanson1999intermediate}.
For the sake of completeness, we include a proof here.  As observed in \cite[Theorem 1]{Nathanson1999intermediate}, $b^2=b$ and
\begin{equation}
ba^kb=
\begin{pmatrix}
k+1 & 0\\
k+1 & 0
\end{pmatrix},
\end{equation}
and then
for any positive integers $k_1, \cdots, k_r$,
\begin{equation}
b a^{k_1} ba^{k_2}b \cdots ba^{k_r}b=\left( \prod_{i=1}^r(k_i+1) \right) b.
\end{equation}
Taking $p_1,p_2, \cdots, p_r$ to be distinct prime numbers smaller than or equals to $\sqrt{n}$, we have
\begin{equation}\label{distprim}
 b a^{p_1-1} ba^{p_2-1}b \cdots ba^{p_r-1}b= \left( \prod_{i=1}^{r} p_i \right) b.
\end{equation}
Moreover, by Chebyshev's theorem \cite[Theorem 6.3]{Nathanson1996additive}, there exist two universal constants $c,c'>0$ such that
\begin{equation}\label{Chebyshev}
\frac{c' \sqrt{n}}{\log n} \le \# \{p \le \sqrt{n}:\ p \text{ is a prime number } \} \le \frac{c \sqrt{n}}{\log n}.
\end{equation}
By \eqref{Chebyshev},  the length of such word in \eqref{distprim} is bounded above by
$$ 1+ \sum_{i=1}^r p_i \le 1+r \sqrt{n} \le 1+  \frac{ cn }{\log n} \le n$$
for all $n$ sufficiently large.  Then each of such elements belongs to $B_n$. Moreover, any two such elements are distinct since each positive integer is a unique product of primes. Whence, every subset of primes up to $\sqrt{N}$ corresponds to an element of the semigroup $S$. Write
\begin{equation*}
S_m \colonequals \left\{b a^{p_1-1} ba^{p_2-1}b \cdots ba^{p_r-1}b: \ p_1<p_2<\cdots < p_r \text{ are primes}, \forall \; i \le r, \ p_i \le \sqrt{n}; \; \sum_{i=1}^{r}p_i=m  \right\},
\end{equation*}
and we have
\begin{equation}
\sum_{m=0}^{n-1} \# S_m  \ge 2^{c' \sqrt{n}/ \log n}
\end{equation}
where we have used \eqref{Chebyshev}.
Therefore, there exists at least one $m_0 \le n-1$ such that $$\# S_{m_0}  \ge  2^{c' \sqrt{n}/ \log n}/n.$$  Observe that for any two distinct positive integers $k, \ell \ge 2$,  $b a^{k-1}b a^{\ell-1}b =ba^{k \ell-1} b$ and
$$k+ \ell+1= \vert b a^{k-1}b a^{\ell-1}b  \vert \le \vert ba^{k \ell-1} b\vert=k \ell+1.$$
Therefore, every element in $S_m$ is written in the shortest expression.
By $a^{n-m_0-1} S_{m_0} \subset E_n$, we conclude the proof.

\end{proof}

As we know the structure of the lexicographic minimal spanning tree $\cT$, we can calculate its $\IBN$ in the following proposition.

\begin{proposition}\label{prop:mintree}
 For the lexicographic minimal spanning tree $\cT$ of the semigroup $S$ generated by $\{a, b\}$ defined in \eqref{gen:semigr} and setting $b<a$ for the lexicographic order, we have
\begin{equation}
\IBN(\cT)=\frac{1}{2}.
\end{equation}

\end{proposition}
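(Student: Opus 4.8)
The strategy is to establish the two inequalities $\IBN(\cT)\le 1/2$ and $\IBN(\cT)\ge 1/2$ separately, using the explicit description of the lexicographic minimal spanning tree $\cT$ of $S$ given in \eqref{fig:princ}. For the upper bound, I would exhibit a family of economical cutsets witnessing $\IBN(\cT)\le 1/2$. The natural choice is $\pi_n=E_n$: by \eqref{cutgrow} we have $\IBN(\cT)\le\Igr(\cT)$, and Lemma \ref{cor:Nathanson} gives $\lim_n \log\log\#E_n/\log n = 1/2$, so for any $\gl>1/2$ we have $\#E_n=\exp(o(n^{\gl}))$ along a subsequence, hence $\sum_{e\in E_n}\exp(-n^{\gl})\to 0$; thus $\IBN(\cT)\le\Igr(\cT)=1/2$. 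Actually, since the liminf and limsup of $\log\log\#E_n/\log n$ both equal $1/2$, this gives $\Igr(\cT)=1/2$ outright, so the upper bound is essentially immediate from the already-proved volume estimate.

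The substantive part is the lower bound $\IBN(\cT)\ge 1/2$, i.e. showing that for every $\gl<1/2$ there is a constant $\delta>0$ with $\sum_{e\in\pi}\exp(-|e|^{\gl})\ge\delta$ for every cutset $\pi$. By the max-flow min-cut theorem (\cite[Theorem 3.1]{Lyons2016trees}, as invoked in the excerpt), it suffices to construct an admissible flow $\theta$ from $\varrho$ to infinity with positive strength, where edge $e$ carries capacity $c_e=\exp(-|e|^{\gl})$. Here is where the structure of $\cT$ enters: along the "spine" $a^i$ and its branchings into words $a^iba^{p_1-1}b\cdots$ using primes $p\le\sqrt{n}$, the tree has, near generation $n$, roughly $\exp(c\sqrt{n}/\log n)$ distinct vertices (this is exactly the counting behind Lemma \ref{cor:Nathanson}, coming from \eqref{Chebyshev} and the fact that subsets of primes below $\sqrt n$ give distinct elements of short length). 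I would route the flow so that it splits at each branch point (the green dashed lines in Figure \ref{fig:semi}) in proportion that keeps each branch alive; since the number of branches by generation $n$ is $\exp(\Theta(\sqrt n/\log n))=\exp(n^{1/2+o(1)})$, the flow through a single edge at generation $n$ need only be of order $\exp(-n^{1/2+o(1)})$, which dominates $c_e=\exp(-n^{\gl})$ for any fixed $\gl<1/2$ once $n$ is large. One must check admissibility ($0\le\theta(e)\le c_e$ and Kirchhoff's law at each vertex), which reduces to a bookkeeping argument: allocate to each branch a share proportional to the eventual growth of its subtree, so that conservation holds automatically and the capacity constraint is the inequality $\exp(-n^{1/2+o(1)})\le\exp(-n^{\gl})$.

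The main obstacle I anticipate is making the flow construction rigorous in the face of the irregular, non-spherically-symmetric branching of $\cT$: the branch points occur at generations determined by partial sums of primes, and different branches (the three word-types in \eqref{fig:princ}, and within type (3) the various prime patterns) grow at genuinely different rates, so one cannot simply "split equally." The clean way around this is to not use all of $\cT$ but to extract a large spherically symmetric subtree $\cT'\subset\cT$ with $\Igr(\cT')=1/2$ — concretely, by fixing at each stage a canonical enumeration of primes $p\le\sqrt n$ and taking, at generation $n$, a prescribed number $m_n=\exp(n^{1/2+o(1)})$ of surviving vertices that can each be extended, then appealing to \eqref{eq:spherical} which gives $\IBN(\cT')=\Igr(\cT')$ for spherically symmetric trees. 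Since $\IBN$ is monotone under taking subtrees (any cutset of $\cT$ restricts to a cutset of $\cT'$, so the infimum over cutsets can only grow), $\IBN(\cT)\ge\IBN(\cT')=1/2$. This sidesteps the analysis of the full branching geometry and reduces everything to verifying, via the prime-counting bound \eqref{Chebyshev}, that such a spherically symmetric subtree with the right $\Igr$ exists inside $\cT$ — which is precisely the content already extracted in the proof of Lemma \ref{cor:Nathanson}.
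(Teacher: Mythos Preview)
Your upper bound is identical to the paper's: both invoke \eqref{cutgrow} and Lemma~\ref{cor:Nathanson}.

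For the lower bound, your first instinct --- construct a flow that splits at the branch points marked in green in Figure~\ref{fig:semi} --- is exactly what the paper does, and it is far simpler than you anticipate. The paper sends all flow through $b$, and along each ray splits \emph{equally} the first time the current $a$--exponent reaches $p-1$ for a prime $p$ not yet used on that ray. Your worry that ``different branches grow at genuinely different rates, so one cannot simply split equally'' is misplaced: admissibility only requires $\theta(e)\le\exp(-|e|^{\gl})$, and after $k$ equal splits the flow is $c\cdot 2^{-k}$. The key point is that on any ray in the flow's support, an edge at depth $\ell$ has already passed through $k$ distinct primes $p_1<\cdots<p_k$ with $\ell\le \kappa(p_k)+\sum_i p_i\le p_k^2$, so by Chebyshev $k\gtrsim \sqrt{\ell}/\log\ell$ and hence $c\cdot 2^{-k}\le\exp(-\ell^{\gl})$ for any fixed $\gl<1/2$ once $c$ is small. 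No proportional allocation is needed; the growth rate of individual subtrees never enters.

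Your proposed ``clean way'' via a spherically symmetric subtree $\cT'\subset\cT$ with $\Igr(\cT')=1/2$ has a genuine gap. Lemma~\ref{cor:Nathanson} only bounds $\#E_n$; it does not construct any subtree, let alone a spherically symmetric one. Spherical symmetry demands that \emph{every} retained vertex at a given level have the same number of retained children, and in $\cT$ the branch points along different rays occur at depths governed by different partial sums of primes, so synchronising them is far from automatic. (Compare the stretched $3$--$1$ tree mentioned after \eqref{eq:spherical}: there $\Igr=1/2$ but every infinite subtree contains a single ray, so no spherically symmetric subtree with positive $\Igr$ exists.) You would have to exhibit an explicit spherically symmetric $\cT'$ and verify its growth, which is at least as much work as the direct flow argument you dismissed.
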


 \begin{proof}
 By \eqref{cutgrow}, we have
\begin{equation}
\IBN(\cT) \le \liminf_{n \to \infty} \frac{\log \log \# E_n}{\log n}=  \frac{1}{2}.
\end{equation}
Now we show $\IBN(\cT)  \ge 1/2$. For any $\gl < \frac{1}{2}$, we designate a flow from $\mathrm{Id}$ to infinity as follows: $\theta(\{\mathrm{Id}, a\})=0,\theta(\{\mathrm{Id}, b\})=c>0$, and the flow along any self-avoiding path from $\mathrm{Id}$ to infinity splits equally when it visits ${p-1}$ the first time with $p$ being prime. We refer to Figure \ref{fig:semi} for a graphic explanation. Now we verify that this flow is admissible, \textit{i.e.} $\theta(e) \le \exp({-\vert e \vert^{\gl}})$. For any vertex at distance $\ell =j+\sum_{i=1}^k p_i$ where $p_1<p_2< \cdots <p_k$ are primes and $0 \le j <\kappa(p_k)$ with
$$\kappa(p_k) \colonequals \inf \{ p>p_k: p \text{ is prime } \} .$$
The flow in the edge $e$ between this vertex and its parent is
\begin{equation}\label{flow:semigr}
 c \cdot 2^{-\# \{ p \le p_k: \ p \text{ is  prime}\}}.
\end{equation}
By Cheyshev's Theorem \cite[Theorem 6.3]{Nathanson1996additive}, for all $p_k$ sufficiently large, we have $q(p_k) \le 2 p_k$  and then
$$\ell \le \kappa (p_k)+ \sum_{i=1}^{p_k}i \le 2p_k +\sum_{i=1}^{p_k}i \le p_k^2.$$
Therefore, by  Cheyshev's Theorem, for all $p_k$ sufficiently large, the r.h.s. in \eqref{flow:semigr} is bounded above by
\begin{equation}
c \cdot 2^{-\frac{p_k}{2\log p_k}} \le c 2^{-\frac{\sqrt{\ell}}{2\log \sqrt{\ell}}} \le e^{-\ell^{\gl}}.
\end{equation}
Moreover, we can choose $c>0$ sufficiently small such that the inequalities above hold for all $p_k$.
Therefore, the designated flow is admissible, and by \cite[Theorem 3.1]{Lyons2016trees} we have $\gl_c \ge \frac{1}{2}$. We conclude the proof.

 \end{proof}

\section{Finding good trees inside permutation wreath products}\label{sec:tree-perwproduct}

In this section we will show how to construct spherically symmetric trees with high IBN inside permutation wreath products of intermediate growth groups. The lower bound we will get on the IBN of the tree will be given in terms of the maximal inverted orbit growth.

\subsection{Permutation wreath products}\label{subse:pwp}
Let $G$ be a  group acting on a set $X$ from the right, and let $A$ be another group. Borrowing terminologies from "lamplighter groups", we will call $G$ the "base" group, and refer to $A$ as the "lamp" group. The permutation wreath product $W=A\wr_X G$ is the semidirect product of $\sum_X A$ with $G$ where $\sum_X A$ consists of those functions $f: X \mapsto A$ with $\supp(f)\colonequals \{ x \in X: \ f(x)\neq 1_{A}\}$ being finite.
A generic element of $W$ is written as $fg$ with  $f \in \sum_X A$ and $g \in G$.
Moreover, for  $f_1,f_2 \in \sum_X A$ and $g_1,g_2 \in G$, the product between $ f_1 g_1$ and $ f_2 g_2$ is defined as
$$ ( f_1 g_1) \cdot (f_2 g_2) \colonequals  (g_2^{-1}\cdot f_1)  f_2  g_1g_2$$
where $(g^{-1} \cdot f)(x) \colonequals f(x g^{-1})$ for all $x \in X$. We distinguish a vertex $x_0 \in X$ and identify $a \in A$ with $f_a \in \sum_X A$ where
\begin{equation}\label{gen2func}
f_a(x) \colonequals
\begin{cases}
a, &\text{ if } x=x_0,\\
1_A, & \text{ if } x \neq x_0.
\end{cases}
\end{equation}
Let $S_A$ and $S_G$ be the generating sets of $A$ and $G$ respectively. We will define a generating set for $W$ by taking all products of the form  $(a_i g_i)_{n \in \bbN}$ with $g_i \in S_G, a_i=1_A$ or $g_i=1_G, a_i \in S_A$. Let $u=a_1 g_1 a_2 g_2 \cdots  a_{\ell} g_{\ell} \equalscolon f_u  g_u $ and $v=u  a_{\ell +1}g_{\ell+1} \equalscolon  f_v g_v$, and then we say
\begin{itemize}
\item[(1)] if $g_{\ell+1} \in S_G$ and $a_{\ell+1}=1_A$, there is an edge of "$G$" type from $u$ to $v$ and $g_v=g_u  g_{\ell+1}, f_v=g_{\ell+1}^{-1} \cdot f_u $;
\item[(2)] if $g_{\ell+1}=1_G$ and $a_{\ell+1} \in S_A$, there is an edge of "$A$" type from $u$ to $v$ and
$g_u=g_v, f_v=f_u$ except at $x_0$ where $f_v(x_0)=f_u(x_0) a_{\ell+1}$.
\end{itemize}
With this rule, the Cayley graph $(W, S_A \cup S_G)$ is determined.
Indeed for $u=a_1 g_1 a_2 g_2 \cdots  a_{\ell} g_{\ell}= f_u  g_u $ , we know that $g_u=g_1 \cdots g_{\ell}$ and $f_u= ((g_1\cdots g_{\ell})^{-1} \cdot f_{a_1}) ((g_2\cdots g_{\ell})^{-1} \cdot f_{a_2}) \cdots  (g_{\ell}^{-1} \cdot f_{a_{\ell}})$. Then we have  $\supp( f_v) \subset \{ x_0g_{\ell}, x_0g_{\ell-1}g_{\ell}, \cdots, x_0 g_1g_2 \cdots g_{\ell}\}$.
For $w=g_1g_2\cdots g_{\ell}$, we define its inverted orbit as
\begin{equation*}
\cO(w) \colonequals \{  x_0g_{\ell}, x_0g_{\ell-1}g_{\ell}, \cdots, x_0 g_1g_2 \cdots g_{\ell}  \}.
\end{equation*}

\subsection{Constructing good trees}

Our main theorem says that if the inverted orbit growth is at least $n^\alpha$ then there is a tree in the Cayley graph with $\IBN \geq \alpha$.

\begin{theorem}\label{th:fightertree}
Assume $\# S_A \ge 2$ and that there exists $C_2>0$ such that for all $n \in \bbN$ there is a word $w_n$ of length $n$ satisfying
\begin{equation}\label{assum2}
\#\cO(w_n)  \ge C_2 n^{\ga}.
\end{equation}
Then there is a  spherically symmetric tree $\cT$ in the Cayley Graph
$(W,S_A \cup S_G)$ satisfying $$\IBN(\cT) \ge \ga.$$
\end{theorem}

\subsection*{Proof of Theorem \ref{th:fightertree}}
We divide the proof into several steps.\\
\textbf{Step 1}: We first find an infinite word $w=g_1g_2\cdots$ satisfying
\begin{equation}\label{lwbd:inforbit}
 \text{for all } n \in \bbN, \quad   \#\cO(g_1\cdots g_n) \geq Cn^{\alpha} .
\end{equation}

By \eqref{assum2},  for all $n \in \bbN$ there exists a word $\xi_n=g_1^{(n)}g_2^{(n)}\cdots g_{2^n}^{(n)} \in G$ with $g_i^{(n)} \in S_G $ such that $\# \cO (\xi_n) \ge C_2 2^{\ga n}.$
We construct an infinite word $w=\xi_1 \xi_2 \xi_3 \cdots \xi_n \cdots$
 by concatenating $(\xi_n)_{n \in \bbN}$.  For $0 \le i <2^{n+1}$,  observe
 \begin{equation}\label{inclusionIO}
 \begin{gathered}
 \cO(\xi_1 \xi_2 \cdots \xi_n g^{(n+1)}_1\cdots g^{(n+1)}_i) \supset \cO (\xi_n)g^{(n+1)}_1\cdots g^{(n+1)}_i,\\
\vert \xi_1 \xi_2 \cdots \xi_n g^{(n+1)}_1\cdots g^{(n+1)}_i \vert \le 4 \vert \xi_n \vert,
\end{gathered}
 \end{equation}
 where $|\xi|$ denotes the number of generators in the word $\xi$. Therefore $w=\xi_1 \xi_2 \cdots$ satisfies our requirements.

\textbf{Step 2}:
We now want to use the infinite word $w$ and the wreath structure of the permutation wreath product to construct our tree. The basic idea is to use the lamps to branch the path. We want to split 
whenever we encounter a "new" branching position 
that is when the inverted orbit increases by $1$. However, if the orbit of a segment inside the infinite word is a loop in the base group, then this may cause a loop in our constructed graph, so that it may not be a tree. An important observation is that the problematic loops are only those in which the size of the inverted orbit does not increase (otherwise, the branching breaks the loop in $W$.)
We therefore describe a way to "delete" those loops where the inverted orbit does not increase. We make this more precise below.

We start with the following observation
\begin{equation}\label{build:IO}
 \cO(g_1 \cdots g_n g_{n+1})= \left\{ x_0 g_{n+1}\right\} \cup \cO(g_1 \cdots g_n) g_{n+1},
\end{equation}
and in particular $\# \cO(g_1 \cdots g_n g_{n+1}) \ge \# \cO(g_1 \cdots g_n)$.
Moreover, for a loop $g_{i+1}g_{i+2} \cdots g_{i+j}=1_{G}$, we say that this loop does not contribute to the inverted orbit if
\begin{equation}\label{unuse:loop}
\text{for all } \, k \in \lint 1, j\rint, \quad
 x_0g_{i+k} \in \cO(g_1\cdots g_i)g_{i+1} \cdots g_{i+k}.
\end{equation}
In this case, we have
\begin{equation}\label{orbit:erase}
\cO(g_1\cdots g_i g_{i+1} \cdots g_{i+j} g_{i+j+1})= \cO(g_1\cdots g_i g_{i+j+1}).
\end{equation}

This implies that the erasure of such a loop does not affect the inverted orbit of any prefix containing this loop, and therefore  we can delete it from the word $w$ without affecting the inverted orbit growth.
Notice on the other hand, that if a loop  does not satisfy \eqref{unuse:loop}, we will not 
delete it, as deleting such a loop will decrease the inverted orbit. Such loops however will not cause a problem as they add at least one new point in the inverted orbit which we use to branch and thus break the loop in the permutation wreath product.

\textbf{Step 3}: We next give an algorithm for erasing only those loops in $w$ satisfying \eqref{unuse:loop}. We use the convention $\inf \emptyset= \infty.$

\begin{lemma} \label{lema:goodloop}
There exists an infinite word $ q= q_1 q_2 \cdots$ satisfying: for all $n \in \bbN$
\begin{equation}\label{qorbsize2}
   \# \cO(q_1 \cdots  q_n) \ge C_2 \left(\frac{n}{4}\right)^{\ga},  
\end{equation}
and if $ q_{i+1} \cdots  q_{i+j}=1_{G}$ is a loop, it does not satisfies \eqref{unuse:loop}. That is to say, any loop in $ q$ increase the size of the inverted orbit at least by one.
\end{lemma}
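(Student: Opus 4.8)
The plan is to build $q$ by a greedy left-to-right loop-erasure on $w=g_1g_2\cdots$: reading one letter at a time, I keep a finite word $q^{(t)}$ (the output after having consumed $g_1,\dots,g_t$), with $q^{(0)}$ empty; given $q^{(t-1)}$ I append $g_t$ and then repeatedly delete the longest suffix of the current word that is a loop ($=1_G$) and is \emph{useless} for its preceding prefix in the sense of \eqref{unuse:loop}, stopping when none remains; this defines $q^{(t)}$, and I set $q:=\lim_t q^{(t)}$ once that limit is shown to exist.

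First I would record the orbit-invariance of this operation. If $\ell=g_{i+1}\cdots g_{i+j}$ is a useless loop for $u=g_1\cdots g_i$, then an induction on $k\le j$ using \eqref{unuse:loop} and \eqref{build:IO} gives $\cO(g_1\cdots g_{i+k})=\cO(u)g_{i+1}\cdots g_{i+k}$; in particular $\#\cO$ is constant along $\ell$ and $\cO(u\ell)=\cO(u)$, which is exactly \eqref{orbit:erase}. Iterating \eqref{build:IO} also shows $\cO(vs)=A(s)\cup\cO(v)\pi(s)$ with $A(s),\pi(s)$ depending only on the continuation $s$, so $\cO(u)=\cO(v)$ forces $\cO(us)=\cO(vs)$; combining the two, $\cO(u\ell s)=\cO(us)$ for every $s$. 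Consequently the invariant $\cO(q^{(t)})=\cO(g_1\cdots g_t)$ is preserved throughout the algorithm (an append transforms the orbit as in \eqref{build:IO}, and every deletion leaves it untouched). I would also note the converse, which is the engine for the second assertion: since $x_0g_{i+k}$ always belongs to $\cO(g_1\cdots g_{i+k})$, the same induction shows that a loop which \emph{fails} \eqref{unuse:loop} must make $\#\cO$ strictly increase somewhere along it.

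Next I would check that $q$ is well defined and infinite, and deduce the orbit bound \eqref{qorbsize2}. By \eqref{lwbd:inforbit} together with the invariant, $\#\cO(q^{(t)})=\#\cO(g_1\cdots g_t)\to\infty$; and since $\#\cO(g_1\cdots g_t)$ is non-decreasing in $t$ while $\#\cO(q^{(t)})\le|q^{(t)}|$, once $\#\cO(g_1\cdots g_t)\ge n$ the length of $q^{(t)}$ stays $\ge n$ for all later $t$, so --- deletions acting only on the suffix --- the first $n$ letters of $q^{(t)}$ never change again. This yields the limit word $q$. Writing $m(n)$ for the step at which the $n$-th permanent letter is placed, one has $m(n)\ge n$, and at that step every letter preceding the one just placed is already permanent (a deletion removing such a letter would have to delete the permanent letter sitting at the end), so $q^{(m(n))}=q_1\cdots q_n$ exactly, and it carries no useless suffix-loop. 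Hence $\#\cO(q_1\cdots q_n)=\#\cO(g_1\cdots g_{m(n)})\ge\#\cO(g_1\cdots g_n)\ge C_2(n/4)^{\ga}$. Finally, were $q_{i+1}\cdots q_{i+j}=1_G$ a useless loop of $q$, then at step $m(i+j)$ the word $q^{(m(i+j))}=q_1\cdots q_{i+j}$ would contain it as a useless suffix-loop, contradicting that the deletion phase of that step had terminated; so every loop of $q$ makes $\#\cO$ grow by at least one, by the converse above.

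The main obstacle is not any estimate but getting the bookkeeping of the algorithm right: one must check that a deletion step always removes a \emph{complete} useless loop --- so that the invariance of the second paragraph genuinely applies even though the deleted block can contain letters that had been kept at earlier steps --- and that ``permanence propagates to the left'', i.e.\ that a prefix of $q$ literally equals $q^{(t)}$ at the moment its last letter becomes permanent. Once these structural facts are established, all the quantitative statements follow directly from the identities \eqref{build:IO}, \eqref{unuse:loop} and \eqref{orbit:erase}.
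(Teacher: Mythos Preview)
Your proof is correct and takes a somewhat different route from the paper's. The paper works \emph{globally} on $w$: it locates, in order of their starting indices, maximal useless loops $g_{L_i}\cdots g_{U_i}$ inside the original word (those with $\#\cO(g_1\cdots g_{U_i})=\#\cO(g_1\cdots g_{L_i-1})$) and deletes these blocks all at once to form $q$; the absence of useless loops in the output is then asserted rather than argued (it can be justified by pulling a hypothetical useless loop of $q$ back to a useless loop of $w$ starting strictly before some $L_k$, contradicting minimality of $L_k$). Your algorithm is instead the \emph{online} variant: you feed in letters one at a time, deleting useless \emph{suffix}-loops greedily, and you prove the no-useless-loop property directly by noting that any such loop in $q$ would be a useless suffix-loop of $q^{(m(i+j))}=q_1\cdots q_{i+j}$, on which the deletion phase had already run to completion without acting. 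The invariant $\cO(q^{(t)})=\cO(g_1\cdots g_t)$, together with the trivial bound $\#\cO\le|\cdot|$, is what drives both the stabilization of prefixes and the ``permanence propagates to the left'' step you flag at the end; these are indeed the only points requiring care, and your sketch handles them. Either approach yields \eqref{qorbsize2} in the same way, via $m(n)\ge n$ and \eqref{lwbd:inforbit}. Your bookkeeping is more explicit than the paper's (which is closer to a sketch), at the price of a slightly more elaborate erasure procedure.
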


\begin{proof}
We take the word $w=g_1 g_2 \cdots g_n \cdots$ stated in
Step 1
and delete all those loops satisfying \eqref{unuse:loop} in order of appearance, to obtain the word $ q$.
We locate these kind of loops by specifying the starting and ending indices. The starting and ending points of the first loop are
\begin{equation*}
\begin{gathered}
L_1 \colonequals \inf \{ n \ge 1: \exists k \in \bbN, g_n g_{n+1} \cdots g_{n+k}=1_G \text{ and } \# \cO(g_1 \cdots g_{n+k}) = \# \cO(g_1 \cdots g_{n-1})  \},
\\
U_1 \colonequals \sup \{ \ell> L_1: g_{L_1} g_{L_1+1}\cdots g_\ell=1_G \text{ and } \# \cO(g_1 \cdots  g_{\ell})  = \# \cO(g_1 \cdots g_{L_1-1})  \},
\end{gathered}
\end{equation*}
where $\cO(g_1 \cdots g_{n-1}) \colonequals \{ x_0\}$ for $n=1$. If $L_1= \infty$, there is no loop satisfying \eqref{unuse:loop} and then we do not need to erase any loop in $w$. If $L_1< \infty$, by \eqref{orbit:erase} and \eqref{lwbd:inforbit} we have $U_1< \infty.$
Supposing $(L_i,U_i)_{1\le i<m}$ is defined with $U_{m-1}< \infty$, we define
\begin{equation*}
\begin{gathered}
L_m \colonequals \inf \{ n > U_{m-1}: \exists k \in \bbN, g_n g_{n+1} \cdots g_{n+k}=1_G \text{ and } \# \cO(g_1 \cdots g_{n+k})  = \# \cO(g_1 \cdots g_{n-1})  \},
\\
U_m \colonequals \sup \{ \ell> L_m: g_{L_m} g_{L_m+1}\cdots g_\ell=1_G \text{ and } \# \cO(g_1 \cdots  g_{\ell})  = \# \cO(g_1 \cdots g_{L_m-1})  \}.
\end{gathered}
\end{equation*}
If $L_m= \infty$,  then $(L_i,U_i)_{1\le i<m}$ is the final sequence. Else if $L_m< \infty$, then  $U_m< \infty$
 by \eqref{orbit:erase} and \eqref{lwbd:inforbit}. We can iterate this algorithm until some $L_i=\infty$ or we obtain an infinite sequence $(L_i,U_i)_{i \in \bbN}$ with $\lim_{i \to \infty} L_i= \infty.$
This algorithm is well-defined:  for any prefix $g_1g_2 \cdots g_n$, by \eqref{lwbd:inforbit} we delete at most $n-Cn^{\ga}$
letter  since we only erase those loops satisfying \eqref{unuse:loop}.
Therefore, for all $n$ sufficiently large, and for a prefix $g_1 \cdots g_n$ of $w$,  to decide which letters in $g_1 \cdots g_n$ will be deleted,  we just need to run the algorithm with the  prefix $g_1 \cdots g_n g_{n+1} \cdots g_{2n}$.
 After the loop erasure procedure in $q$, we obtain an infinite word $ q$ without loops satisfying \eqref{unuse:loop}:
$$  q = g_1 g_2 \cdots g_{L_1-1} g_{U_1+1}g_{U_1+2} \cdots g_{L_2-1} g_{U_2+1}g_{U_2+1} \cdots g_{L_i-1} g_{U_i+1} g_{U_i+2}\cdots g_{L_{i+1}-1} \cdots .$$
In words, we delete the loops $(g_{L_i} g_{L_i+1} \cdots g_{U_i})_{i \in \bbN}$ in $q$ to obtain $ q.$
We relabel the indices to write  $ q=  q_1 q_2 \cdots q_n \cdots $. The lower bound in
\eqref{qorbsize2}  follows from \eqref{lwbd:inforbit} and the fact that
the algorithm does not delete any invert orbit point in any prefix of $q$.
\end{proof}

\textbf{Step 4}: With $ q$ at hand, we are ready to construct a spherically symmetric tree $\cT$ in the Cayley graph $(W, S_A \cup S_B)$ by sequentially choosing the vertices for $\cT$ layer by layer, and linking vertices to the previous layer by an edge of type "$A$" or "$G$" according to the rules in Subsection \ref{subse:pwp}.

We fix two generators $a_1,a_2 \in S_A$ and let $*$ represent a generic element in $\{ a_1,a_2\}$.
The algorithm for picking the vertices for $\cT$  is as follows:
as we walk along the word $ q$ (in the Cayley $(G,S_G)$), we place $*$ if and only if the size of the inverted orbit increases by one. 
The root of the tree is $1_W$, and $a_1, a_2 \in V(\cT)$. We mark those positions where the size of the inverted orbit increases by one when we go along the word $ q$ as follows:
$M_1 \colonequals 1$ and for all $n \ge 2$,
\begin{equation}
M_n \colonequals \inf \left\{\ell: \# \cO(q_1 \cdots q_{\ell}) \ge  n \right \}.
\end{equation}
For any $n$, we define
\begin{equation}
\zeta(n) \colonequals \sup \left\{ \ell: M_{\ell} \le n \right\},
\end{equation}
and then a generic vertex in $V(\cT)$ is of the form:
$$*q_1q_2\cdots q_{M_2-1}*q_{M_2}\cdots q_{M_3-1}*q_{M_3}q_{M_3+1} \cdots q_{M_4-1}*q_{M_4}\cdots q_{M_{\zeta(n)}-1} *q_{M_{\zeta(n)}}q_{M_{\zeta(n)}+1}\cdots q_n .$$
In other words, for any prefix of $ q$, we add a generic lamp generator $* \in \{ a_1, a_2\}$ exactly in front of those $q_i$s where the size of the inverted orbit increases by one. Note that after adding $*$ before $(q_{M_i})_{i \in \bbN}$ in the loops of $ q$ (in the Cayley graph $(G,S_G)$), there is no loop in $\cT$ any more.

Now we can calculate $\IBN(\cT)$. Since by construction $\cT$ is a spherically symmetric tree, we have $\IBN(\cT)=\Igr(\cT)$.
To calculate $\Igr(\cT)$, we estimate $\# E_N $ where $E_n=\{x \in V(\cT): \ \vert x \vert=n \}$ with the graph distance $\vert \cdot \vert$ in $\cT$. At distance $n$ in $\cT$, we count the number of splittings
\begin{equation}
\gL(n) \colonequals     \sup \left\{ \ell: \vert \cO(q_1 \cdots q_{\ell}) \vert+ \ell \le n \right\}
\end{equation}
Since $\vert \cO(q_1 \cdots q_{\ell}) \vert \le \ell$, we have $\gL(n) \ge (n-2)/2$.
As $\# E_n = 2^{\vert \cO(q_1 \cdots q_{\Lambda(n)})  \vert}$, by \eqref{qorbsize2}   we have
\begin{equation}
\liminf_{n \to \infty} \frac{\log \log \# E_n }{\log n}\geq \ga.
\end{equation}
\qed

\subsection{A tree in the wreath permutation group based on the first Grigorchuk group.} \label{se:firefighterexample}
The first Grigorchuk group $G_{012}$ is a subgroup of the automorphism group of the binary tree. It
 is  generated by $\{ a,b,c,d\}$ which are defined by the wreath recursion  as follows: let $ \gep$ be the transposition of $(0, 1)$, and the wreath recursion $\psi$ is defined as
\begin{equation}\label{gri:recur}
\psi: a \mapsto \gep \llangle  1,1 \rrangle , \quad b \mapsto \llangle a, c \rrangle, \quad c \mapsto \llangle a, d \rrangle, \quad d \mapsto \llangle 1, b\rrangle.
\end{equation}
The generators $\{ a,b,c,d\}$ satisfy
\begin{equation}\label{pre:rule}
a^2=b^2=c^2=d^2=1, \quad bc=cb=d, \quad bd=db=c, \quad cd=dc=b.
\end{equation}
We refer to \cite{Grigorchuk2008introduction} and \cite[Chapter 10]{Mann2012grow} for introductions to the first Grigorchuk group.
Bartholdi and Erschler prove the following theorem concerning the inverted orbit for $G_{012}$ acting on the rightmost branch of the binary tree, \textit{i.e.} $x_0=1^{\infty}$ where the binary tree is labeled by $\{0,1 \}^{\infty}$ with two vertices linked by an edge if one is obtained by appending a "$0$" or "$1$" to the other.

\begin{theorem}
[  {\cite[Proposition 4.7]{Bartholdi2012permutational}}] \label{th:BarErs}
Let $\eta$ be the real root of the polynomial $X^3+X^2+X-2$ and set $\ga=\log 2/ \log(2/\eta) \approx 0.7674$. Taking $G=G_{012}$, $A$ some finite group, $W=A\wr_X G_{012}$ and $x_0=1^{\infty}$, then there exists some $C>0$ such that for any $n\geq 1$ there exists a word $w_n\in G$ of length $n$ with $\#\cO(w_n)  \ge C n^{\ga}$.
\end{theorem}
Note that by  \cite[Proposition 4.4]{Bartholdi2012permutational}, for any word $w\in G$ we have $\#\cO(w_n)  \leq C' |w|^{\ga}$.
By \cite[Theorem 5.2]{Bartholdi2012permutational}, for any finite group $A$ with $\#A \ge 3$, the growth rate of the permutation wreath product $W=A\wr_X G_{012}$ satisfies $\# B_n \approx e^{n^\ga}$ (where  $B_n$ denotes the ball of radius $n$ in $W$).
Combining this with Theorem \ref{th:fightertree} we obtain the following corollary.

\begin{cor}\label{cor:grigtree}
For any finite group $A$ with $\#A \ge 3$, there exists a spherically symmetric tree $\cT$ inside the Cayley graph of $W=A\wr_X G_{012}$ that satisfies $\IBN(\cT)=\Igr(W) = \ga$.
\end{cor}

We remark that the above construction and corollary can be extended to more general Grigorchuk groups $G_{\omega}$, such as the groups used in \cite{Bartholdi2014growth}.

\section{The firefighting problem on intermediate growth trees and groups}\label{sec:fireTreeGroup}
\subsection{The firefighting problem}
Let $\cG=(V,E)$ be a locally finite, connected, infinite graph, with $E$ denoting its edges and $V$ denoting its vertices.  Given $g=(g_n)_n$ a sequence of nonnegative integers, we play a firefighting game on $\cG$ as follows: ($W_0 \colonequals \emptyset$)
\begin{itemize}
\item[1.] the initial fire is set on a finite subset $Z_0 \subset V$;
\item[2.] for $n \ge 1$, at round $n \in \bbN$, we can pick a subset $S_n \subset V \setminus (Z_{n-1} \cup W_{n-1}) $ with $\# S_n   \le g_n$ to protect all the vertices in $S_n$. Once a vertex is protected (or on fire), it keeps the state forever.
\item[3.] for $n \ge 1$, set
\begin{equation}
\begin{gathered}
W_n=W_{n-1} \cup S_n= \cup_{i=1}^n S_n,\\
Z_n=Z_{n-1} \cup \left\{ v \in V\setminus W_n: \ \exists u \in Z_{n-1}, u \sim v \right\}.
\end{gathered}
\end{equation}
That is to say, $W_n$ are the protected vertices up to round $n$, and in every round the fire spreads to its nearest neighbors except those protected vertices.
\end{itemize}
Given $Z_0$,  let  $\Upsilon: \bbN \mapsto V$ be the map defined by $\Upsilon(n)=S_n$ described above, and we say that $\Upsilon$ is legal. Furthermore, if  the fire stops growing after finitely many step, \textit{i.e.} $\# \cup_{n=0}^{\infty} Z_n  < \infty$,  we say that $\Upsilon$ is containable for $Z_0$ or the fire $Z_0$ is containable w.r.t. $\Upsilon$.
Otherwise, we say that the fire percolates.
Furthermore, the graph $\cG$ is said to be $g$-containable if for any finite subset $Z_0$ there is a corresponding containable map $\Upsilon$ satisfying $ \# \Upsilon(n) \le g(n)$ for all $n \in \bbN$. Understanding the relation between $\cG$ and its containment functions is considered an asymptotic version of the firefighting game introduced by Hartnell in \cite{hartnell1995firefighter}. It is interesting to note that containment is a quasi-isometric invariant (see \cite{Dyer2017coarse}).

The firefighter problem is of particular interest in the case of Cayley graphs, where one can hope to find connections between the geometry of the group $\cG$ and the containment threshold. Precise asymptotic bounds are known for exponential growth groups (\cite{Lehner2019firefighting}) and for polynomial growth groups (\cite{AmBaKofire}), but no such tight bounds were known for any intermediate growth groups.
Our main aim in this section is to find such bounds for a family of intermediate growth groups. The groups we will use are permutation wreath products. More precisely our bounds will be tight for a certain family of permutation wreath products over Grigorchuk's group, studied by Bartholdi and Erschler \cite{Bartholdi2012permutational}.

\subsection{Firefighting on intermediate growth graphs and groups}
A graph $\cG$ satisfies intermediate containment of rate $\gga \in (0,1)$ if there is some $C>0$ and $g=(g_n)_{n \in \bbN}$ with $g_n \le C \exp(n^{\gga})$ for all $n \in \bbN$ such that $\cG$ is $g$-containable.
Due to the monotonicity (cf. Lemma \ref{lema:initialball}), there is a critical parameter $\gl_c$ such that if $\gga> \gl_c$ then $\cG$ satisfies intermediate containment of rate $\gga$  and if $\gga< \gl_c$ then $\cG$  does not satisfy intermediate containment of rate $\gga$. For $\gl_c=0$ or $\gl_c \ge 1$, the graph $\cG$ is not of intermediate containment type in our definition.
Our first theorem on firefighting says that for intermediate growth trees, the intermediate branching number is the critical parameter for containment. This can be seen as an analogous result to Lehner's theorem for exponential growth trees \cite[Theorem 3.1]{Lehner2019firefighting}, and we delay the proof to the Appendix \ref{sec:fireIBN}.

\begin{theorem}\label{th:fireintermediate}
Let $\cT$ be a locally finite, infinite tree satisfying $\IBN(\cT)= \ga \in (0,1)$,
and then we have $\IBN(\cT)= \gl_c(\cT)$.
\end{theorem}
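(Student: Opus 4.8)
The plan is to prove the two inequalities $\gl_c(\cT) \le \IBN(\cT)$ and $\gl_c(\cT) \ge \IBN(\cT)$ separately, in each case using the cutset characterization \eqref{def:IBN} together with the max-flow min-cut theorem of \cite[Theorem 3.1]{Lyons2016trees}. The guiding intuition is that a containment strategy against a fire spreading on a tree amounts, roughly, to "cutting off" each infected branch: at round $n$ the fire has reached depth $n$, and the number of branches one must still block is controlled by a cutset of the tree, while the budget $g_n \approx \exp(n^\gga)$ is what one can afford to spend per round. Matching the budget growth rate $\gga$ to the decay rate $\exp(-|e|^\gl)$ of cutset sums is exactly the content of the theorem.

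\textbf{Step 1: $\gl_c(\cT) \le \IBN(\cT)$ (containment is possible above $\IBN$).} Fix $\gga \in (\IBN(\cT), 1)$. By \eqref{def:IBN} there is a cutset $\pi$ with $\sum_{e \in \pi} \exp(-|e|^{\gga'}) $ arbitrarily small for any $\gga' < \gga$; more usefully, I want a family of cutsets $\pi_k$ pushed to depth $\ge k$ with $\sum_{e\in\pi_k}\exp(-|e|^{\gl'})\to 0$ for some $\gl'$ slightly below $\gga$ (this is the standard reformulation: $\inf_\pi \sum_e \exp(-|e|^{\gl'})=0$ for $\gl' \in (\IBN,\gga)$, and one can restrict to cutsets beyond any given depth since finitely many edges contribute a bounded amount). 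Given an arbitrary finite initial fire $Z_0 \subset B(R)$, by the monotonicity lemma (Lemma \ref{lema:initialball}) it suffices to contain a fire started from all of $B(R)$. I would then build the strategy greedily: wait until the fire has spread to some large depth $m$ (chosen so that $\sum_{e \in \pi_m}\exp(-|e|^{\gl'})$ is tiny), and on the edges of a suitable cutset $\pi_m$ deployed across rounds, protect at round $n$ the endpoints of those edges $e \in \pi_m$ with $|e| \approx n$. The number of such edges with $|e| = n$ is at most $\#\pi_m \cap E_n$, and since $\sum_{e \in \pi_m} \exp(-|e|^{\gl'})$ is small while each term is $\exp(-n^{\gl'})$, one gets $\#(\pi_m \cap E_n) \lesssim \exp(n^{\gl'}) \le \exp(n^\gga)$ for $n$ large — so the budget $g_n = C\exp(n^\gga)$ suffices. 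Cutting every edge of a cutset contains the fire. Some care is needed because the fire advances one level per round while we may want to protect edges at many depths; the resolution is that we only ever need to protect an edge $e \in \pi_m$ before the fire reaches $e^-$, i.e. by round $|e|-1$, and the per-round cost at round $n$ is the number of cutset edges at depth $n+1$, which is the bound just described.

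\textbf{Step 2: $\gl_c(\cT) \ge \IBN(\cT)$ (containment fails below $\IBN$).} Fix $\gga < \IBN(\cT)$ and any legal strategy $\Upsilon$ with $\#\Upsilon(n) \le C\exp(n^\gga)$, starting from (say) a single infected vertex at the root's child. I claim the fire percolates. The set $W = \bigcup_n W_n$ of ever-protected vertices, together with the constraint that $\#(W \cap E_n) \le \sum_{k\le n} \#\Upsilon(k) \le C' n \exp(n^\gga)$, is too thin to disconnect $\varrho$ from infinity: if the fire were contained then $W$ would contain a cutset $\pi$ (the minimal protected vertices on each ray), and this cutset satisfies $\sum_{e\in\pi}\exp(-|e|^{\gl}) = \sum_n \#(\pi\cap E_n)\exp(-n^\gl) \le \sum_n C' n\exp(n^\gga)\exp(-n^\gl)$, which converges to a finite value but — and this is the point — can be made $\to 0$ as we push $\pi$ to large depths, for any $\gl \in (\gga, \IBN(\cT))$. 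Hence $\inf_\pi \sum_{e\in\pi}\exp(-|e|^\gl)=0$ for such $\gl$, contradicting $\gl < \IBN(\cT)$ and the definition \eqref{def:IBN}. (To get "$\pi$ pushed to large depth," note that if the fire is contained, then for every $m$ there is a containing cutset inside $W$ consisting only of edges at depth $\ge$ some $m$, because the fire eventually stops spreading; alternatively invoke the standard fact that if $\inf_\pi \sum > 0$ then the infimum over cutsets avoiding $B(m)$ is also positive.) This half really is just the cutset criterion run in reverse, using that a thin protected set cannot be a cutset with positive $\gl$-capacity.

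\textbf{Main obstacle.} The delicate point is bookkeeping the scheduling in Step 1: the firefighter protects $g_n$ vertices at round $n$, the fire spreads exactly one generation per round, and the cutset edges to be blocked sit at varying depths. One must verify that a single fixed cutset $\pi_m$ can be "blocked in time" with per-round cost never exceeding $\#(\pi_m \cap E_{n+1})$, and that this in turn is $\le C\exp((n+1)^\gga)$ uniformly — which uses the slightly-below-$\gga$ exponent $\gl'$ to absorb the polynomial factor $n$ and any constants. A secondary subtlety is the reduction from arbitrary finite $Z_0$ to a ball via Lemma \ref{lema:initialball}, and the symmetric reduction in Step 2 to a single infected vertex (percolation from any fire implies percolation from a sub-fire). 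Neither is deep, but getting the quantifiers on $m$, $n$, $\gl'$ in the right order is where the proof has to be written carefully. I expect no issues from the max-flow min-cut input; it is only needed implicitly, since in Step 2 we argue directly that $W$ fails to be a cutset, and in Step 1 we exhibit an explicit cutset.
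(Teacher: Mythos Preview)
Your overall strategy matches the paper's: both directions go through the cutset characterization of $\IBN$, combined with Lehner's surrounding-set criterion (Lemma~\ref{lema:ballcriterial}). Step~1 is essentially correct and parallels the paper; the only difference is cosmetic --- the paper works with a single exponent $\gl>\ga$ and a carefully chosen $\gep=\exp(-k^\gl)-\exp(-(k+1)^\gl)$ together with the elementary inequality $x^\gl+(1-x)^\gl\ge 1$ to absorb the shift by $k$, whereas you introduce an auxiliary $\gl'\in(\ga,\gga)$ to swallow the polynomial factors. Both work.

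Step~2, however, has a genuine gap. You start the fire at a \emph{single} vertex and argue that any containing strategy yields a cutset $\pi\subset W$ with
\[
\sum_{e\in\pi}\exp(-|e|^{\gl})\;\le\;\sum_{n}C'n\exp(n^\gga-n^\gl)\;<\;\infty.
\]
But a single finite bound does not contradict $\inf_\Pi\sum_e\exp(-|e|^\gl)>0$; you need this sum to be arbitrarily small. Your proposed fix --- ``push $\pi$ to large depths'' --- does not work from a single-vertex fire: the protected set $W$ is what it is, and need not contain any deep cutset (think of a tree that is a long path near the root before branching; the fire is stopped at depth $2$ and $W$ contains no deep cutset at all). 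The monotonicity you invoke, ``percolation from any fire implies percolation from a sub-fire'', runs the wrong direction: a larger initial fire is harder to contain, not easier, so containment of a large fire implies containment of a sub-fire, not the reverse.

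The fix is exactly what the paper does: to show non-containment you must exhibit \emph{some} bad $Z_0$, and the right choice is $Z_0=B(k)$ for $k$ large. A surrounding set for $B(k)$ is automatically a cutset at depth $>k$, so its $\gl$-sum is bounded by the tail $\sum_{n>k}C'n\exp(n^\gga-n^\gl)$, which tends to $0$ as $k\to\infty$. Choosing $k$ so that this tail is below $\gep=\inf_\Pi\sum_e\exp(-|e|^\gga)>0$ (with $\gga\in(\gl,\ga)$) gives the contradiction. Once you replace the single vertex by $B(k)$, your argument goes through.
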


It is easy and well-known that for any group, the growth rate of the spheres is an upper bound for the firefighting problem. Using the above theorem and monotonicity, it follows that if one can find a tree $\cT$ inside the Cayley graph $\cG=(G,S_G)$ of $G$ with $\IBN(\cT)=\Igr(\cG)$ then the critical threshold for figherfighting on $\cG$ will be $\IBN(\cT)$. A similar strategy was done for the exponential case in \cite{Lehner2019firefighting}, using lexicographical minimal spanning trees. The lexicographical spanning tree is sub-periodic, and by a theorem of Furstenberg (cf. \cite{Furstenberg67} \cite[Theorem 3.8]{Lyons2016trees}) sub-periodic trees have a (exponential) branching number equal to their growth rate. However, for the IBN, sub-periodicity does not imply that the intermediate branching number is equal to the growth (see Theorem \ref{th:tang}). Therefore, instead, we use the trees constructed in section \ref{th:fightertree}.

For the case of permutation wreath products over Grigorchuk groups, the upper and lower bounds coincide, and we can get the exact threshold for the firefighting problem.
More precisely, by  \cite[Theorem 5.2]{Bartholdi2012permutational}, when $A$ is a finite group with $\# A \ge 3$, the growth rate of the permutation wreath product $W=A\wr_X G_{012}$ satisfies $\# B_n \approx e^{n^\ga}$, and by Corollary \ref{cor:grigtree}, there is a subtree $\cT$ of the Cayley graph of $W$ with $\IBN(\cT)=\ga$. Therefore by the above paragraph we get

\begin{cor} Let $A$ be a finite group with $\#A \ge 3$. 
The critical threshold for firefighter problem on the permutation wreath product $W=A\wr_X G_{012}$ is $\ga$.
\end{cor}

 As in the previous section, we note that the above reasoning and corollary can be extended to more general Grigorchuk groups $G_{\omega}$ as in \cite{Bartholdi2014growth}. 

\section{Proof of Proposition \ref{th:intersymsub}}
\label{sec:noperiodtree}
\begin{proof}
Suppose there exits  such a tree, and it is $M$-subperiodic.  Let $(a_n)_{n \in \bbN_0}$ denote a sequence of positive integers such that a vertex at generation $n$ has $a_n$ children.
By $M$-sub-periodicity, for all $j >M$, there exists $i=i(j) \le M$ such that
\begin{equation}\label{subperi:relation}
\forall n \in \bbN_0, \quad a_{j+n} \le a_{i+n}.
\end{equation}
In particular, $$ \sup_{n \in \bbN_0}a_n= \max_{0 \le n \le M}a_n \colonequals U \ge 2. $$
Therefore,
$$  2^{\sum_{n=0}^{n-1} \ind_{\{ a_n \ge 2\}}} \le  \# E_n  \le U^{\sum_{n=0}^{n-1} \ind_{\{ a_n \ge 2\}}}. $$
Therefore it is enough to analyze a simpler sub-periodic spherical symmetric tree $\cT'$ associated with the sequence $(b_n)_{n \in \bbN_0}$ where
$$ \forall n \in \bbN_0, \quad b_n \colonequals 2 \ind_{\{ a_n \ge 2\}}+\ind_{\{ a_n=1\}}$$ is $M$-subperiodic and $\lim_{n \to \infty} E_n(\cT')=\infty$.
From now on, for simplicity, we assume $a_n \in \{ 1,2\}$ for all $n \in \bbN_0$, and let $c_n=a_n-1$.

\medskip
For $\gep>0$ sufficiently small,  by \eqref{assum:subexp}, there exists $N \in \bbN$ sufficiently large such that
$$  \sum_{k=0}^{N-1} c_k \le \gep N . $$
Taking $\ell=M+1 \ge 1$, we cut the interval $\lint 0, \ell \lfloor N/\ell \rfloor-1 \rint$ into intervals of size $\ell$, and count the number of those intervals without $c_n$ taking value $1$, \textit{i.e.}
\begin{equation}\label{ct:inervals}
\# \cA \colonequals \# \left\{j \in \lint 0, \ell \lfloor N/\ell \rfloor-1 \rint: \  \forall n \in \lint j \ell, (j+1)\ell-1 \rint, c_n=0 \right\}  \ge \left\lfloor \frac{N}{\ell}   \right\rfloor-\gep N \ge \frac{N}{2\ell}.
\end{equation}
Setting $\kappa \colonequals \min\{j: \ j \in \cA\}$,  we claim
\begin{equation}\label{claim:no1}
\left\{ n \in \lint \kappa \ell, \ell \lfloor N/\ell \rfloor-1 \rint:\ c_n=1 \right\}= \emptyset.
\end{equation}
Otherwise, let $n_0= \min \left\{ n \in \lint \kappa \ell, \ell \lfloor N/\ell \rfloor-1 \rint:\ c_n=1 \right\} $, and note that $n_0 \ge (\kappa+1)\ell$ by the definition of $\kappa$.

\begin{figure}[h]
 \centering
   \begin{tikzpicture}[scale=.4,font=\tiny]
    \draw (25,-1) -- (52,-1);
    \draw[->] (25,-1) -- (52,-1) node[anchor=north west]{x};
     \foreach \x in {25,...,51} {\draw (\x,-1.3) -- (\x,-1);}

     \draw[red, thick] (25,-1)--(28,-1);
     \draw[red, thick] (29,-1)--(32,-1);
     \draw[red, thick] (33,-1)--(36,-1);
     \draw[blue, thick] (37,-1)--(40,-1);
     \draw[red, thick] (41,-1)--(44,-1);

     \foreach \x in {25,26,28,31,35,36,43} {\draw[fill] (\x,-1) circle [radius=0.1];}
    \foreach \x in {41,42} {\draw[fill, blue] (\x,-1) circle [radius=0.1];}
     \node[below] at (43,-1.3) {$n_0$};

    \draw[blue, thick] (45,-1)--(48,-1);

     \node[left] at (25,-1) {$0$};
     \node[below] at (51,-1.3) {$N$};

   \end{tikzpicture}
   \caption{\label{fig:spheresym}  A graphical explanation with $\ell=\kappa=4$: black dots are the positions where $c_n=1$, the other positions (including blue dots) are where $c_n=0$.
       Blue intervals are those in $\cA$.  By subperiodicity, we can move the black dot at $n_0$ to the left at least $1$ and at most $\ell$, and thus it should be blue. }
 \end{figure}
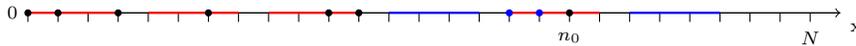
\noindent
Taking $j=M+1$ and applying the analogous relation \eqref{subperi:relation} for $(c_n)_{n \in \bbN_0}$, we obtain
\begin{equation}\label{moveblog}
 c_{n_0} \le c_{n_0-(j-i)} =0
\end{equation}
where $i=i(M+1)$ is a number depending on $M+1$ as claimed before \eqref{subperi:relation}, and the equality is a consequence of $0<j-i \le M+1$,
$c\restriction_{\lint \kappa \ell, n_0-1 \rint}=0$ and
$$\kappa \ell \le (\kappa+1)\ell-(M+1) \le n_0-(j-i) \le n_0-1.$$
 This is a contradiction to the assumption $c_{n_0}=1$. Therefore, we prove  \eqref{claim:no1}.

By induction and \eqref{moveblog}, we obtain $c_n=0$ for all $n \ge \kappa \ell.$
This contradicts the assumption $\lim_{n \to \infty} \# E_n = \infty$.
Therefore, we conclude the proof.
\end{proof}

\

 \subsection*{\bf{ Acknowledgment}}
The authors thank Mikhail Belolipetsky, Russell Lyons and Olivier Thom for helpful discussions, and we acknowledge Daniel Kious and Russell Lyons for comments on references.  G.A. is supported by the Israel science foundation grant 957/20, and S.Y. is supported by the Israel science foundation grants  1327/19 and 957/20.
This work has been performed partially when S.Y. was a postdoc fellow in UFBA and a visiting fellow in IMPA.

\begin{appendix}

\section{ Proof of Theorem \ref{th:IBNrw}} \label{sec:rwphase}

We first deal with the statement about recurrence. Since $\gl> \IBN(\cT)$, then
\begin{equation}
\inf_{\pi \in \Pi} \sum_{e \in \pi} \exp \left( - \vert e \vert^{\gl}\right)=0.
\end{equation}
Therefore, there exists a sequence of pairwise disjoint cutsets $(\pi_n)_{n \ge 1} \subset \Pi$ with $\pi_n \cap \pi_m=\emptyset$ for $n \neq m$, satisfying
$$\forall \ n \in \bbN, \quad \sum_{e \in  \pi_n}\exp \left(- \vert  e \vert^{\gl} \right) \le \frac{1}{n}.$$
We apply the Nash-Williams Criterion \cite[Page 37]{Lyons2016trees} to give a lower bound on the effective resistance, defined in \cite[Eq. (2.4)]{Lyons2016trees}, between $\varrho$ and infinity:
\begin{equation}
\cR( \varrho \leftrightarrow \infty) \ge \sum_{n=1}^{\infty} \left(  \sum_{e \in \pi_n} \exp \left(-\vert e \vert \right)^{\gl} \right)^{-1}= \infty.
\end{equation}
Thus, $\textbf{X}$ is recurrent.\\

Now we move to treat the statement about transience by applying \cite[Proposition 3.4]{Lyons2016trees}. 
We take $\gl' \in (\gl, \IBN(\cT))$ and define
$w_n \colonequals \exp \left( -n^{\gl'}+n^{\gl} \right)$ for $n \in \bbN$. We have
$$ \sum_{n=1}^{\infty} w_n < \infty, $$
and $w_{\vert e \vert} c(e)= \exp(- \vert e \vert^{\gl'}) $. By \cite[Theorem 3.1]{Lyons2016trees}, we have
\begin{equation}
\begin{aligned}
&\max \left\{ \mathrm{Strength}(\theta): \ \theta \text{ flows from $\varrho$ to $\infty$ satisfying } \forall e \in E, \ 0 \le \theta(e) \le c(e) w_{\vert e \vert} \right\}\\
=&\inf_{\pi \in \Pi} \sum_{e \in \pi} c(e) w_{\vert e \vert}>0
\end{aligned}
\end{equation}
where the last inequality uses $\gl'< \IBN(\cT)$.
By \cite[Proposition 3.4]{Lyons2016trees}, there exists a flow $\theta$ satisfying $0 \le \theta(e) \le w_{\vert e \vert} c(e)$ for all $e \in E$ whose  energy is finite. Therefore, by \cite[Theorem 2.11]{Lyons2016trees}, $\textbf{X}$ is transient.
\qed

\section{Proof of Theorem \ref{th:percolate}}\label{sec:percolate}

We first show that $\Theta(\cT) \le \IBN(\cT)$ by the first moment method as in \cite[Proposition 5.8]{Lyons2016trees}.
For $x \in V$, let $e(x)$ denote the edge connecting $x$ with its parent $\accentset{\leftharpoonup}{x}$.
For any cutset $\pi$ separating $\varrho$ from $\infty$, we have
\begin{equation*}
 \{ \varrho \leftrightarrow \infty \} \subset \bigcup_{x:\ e(x)\in \pi}\{ \varrho \leftrightarrow x\},
\end{equation*}
and then
\begin{equation}
\bbP \left[ \varrho \leftrightarrow \infty \right] \le \inf_{\pi \in \Pi} \sum_{x: \ e(x)\in \pi} \bbP \left[ \varrho \leftrightarrow x \right]
\end{equation}
where $\Pi$ is the set of cutsets. Moreover, $\bbP \left[ \varrho \leftrightarrow x \right]=\prod_{y \le x} \exp(-\vert y\vert^{-1+\gl})$, and there exist two constant $c(\gl), C(\gl)>0$ such that for all $n \in \bbN$
$$ c(\gl) \le \frac{1}{n^{\gl}} \sum_{i=1}^n i^{-1+ \gl} \le C(\gl). $$
Then for $\gl> \IBN(\cT)$, we have
\begin{equation}
\bbP \left[ \varrho \leftrightarrow \infty \right] \le \inf_{\pi \in \Pi} \sum_{x: \ e(x)\in \pi} \exp \left(  -c \vert x\vert^{\gl}\right)=0.
\end{equation}
Therefore, $\Theta(\cT) \le \IBN(\cT)$.

\medskip
Now we turn to show $\theta(\cT) \ge \IBN(\cT)$ by applying \cite[Theorem 5.14]{Lyons2016trees}, \textit{i.e.}
\begin{equation}
\bbP \left[ \varrho \leftrightarrow \infty \right] \ge \frac{\cC(\varrho \leftrightarrow \infty)}{1+ \cC(\varrho \leftrightarrow \infty) }
\end{equation}
where, as in \cite[Equation (5.13)]{Lyons2016trees}, the conductance $c(e(x))$ of the edge $e(x)$ is defined by
\begin{equation}\label{perc:conductance}
c(e(x))\colonequals \frac{\bbP \left[ \varrho \leftrightarrow x \right]}{1-\bbP \left[ e(x) \text{ is open}\right]}=\frac{1}{1-\exp \left( -\vert x\vert^{-1+\gl} \right)}\prod_{\varrho< y \le x} \bbP \left[ e(y) \text{ is open }\right].
\end{equation}
Whence, it suffices to show that for $\gl \in (0, \IBN(\cT))$, we have $\cC(\varrho \leftrightarrow \infty) >0.$
By \eqref{perc:conductance}, for $\gl' \in (\gl, \IBN(\cT))$, there exists a constant $c>0$ such that for all $x \neq \varrho$
$$c(e(x)) \ge \exp \left( - C(\gl) \vert x \vert^{\gl} \right) \ge c \exp(-\vert  x\vert^{\gl'}).$$
By Theorem \ref{th:IBNrw} and Rayleigh's Monotonicity Principle, we obtain $\cC(\varrho \leftrightarrow \infty)>0$ to conclude the proof.

\qed

\section{Proof of Theorem \ref{th:RWRCIBN}}
\label{sec:rwrandcond}
We define $\psi_{RC}: E \mapsto [0,1]$ by $\psi_{RC}(e)=1$ for $\vert e \vert =1$,  and for $\vert e \vert >1$
\begin{equation}
\psi_{RC}(e)\colonequals
\frac{\sum_{g<e} C_g^{-1}}{\sum_{g \le e} C_g^{-1}}.
\end{equation}
which is the probability that the RWRC, restricted to $[\varrho, e^+]$ and starting from $e^-$, hits $e^+$ before hitting $\varrho$.
Let
\begin{equation}
\Psi_{RC}(e) \colonequals \prod_{g \le e} \psi_{RC}(e)=\frac{1}{\sum_{g \le e}C_g^{-1}}
\end{equation}
which is the probability of RWRC, restricted to $[\varrho, e^+]$ and starting at $\varrho$, hits $e^+$ before returning to $\varrho$.
For any function $\psi: E \mapsto [0,1]$,   setting $\Psi(e)\colonequals \prod_{g \le e}\psi(g)$ and
\begin{equation}
RT(\cT, \psi) \colonequals \sup \left\{\gamma > 0:  \inf_{\pi \in \Pi}\sum_{e \in \pi} \Psi(e)^{\gamma}>0 \right\},
\end{equation}
we state   \cite[Theorem 2.5]{Collevecchio2020branching} in our setting.

\begin{proposition}[Theorem 2.5 in \cite{Collevecchio2020branching}] \label{prop:CKSthm}
For any locally finite, infinite tree $\cT$, we have
\begin{itemize}
\item if $RT(\cT, \psi_{RC})<1$ $\bbP$-a.s., then RWRC associated with $\psi_{RC}$ is recurrent;
\item if $RT(\cT, \psi_{RC})>1$ with positive $\bbP$-probability, then RWRC associated with $\psi_{RC}$ is transient.
\end{itemize}
\end{proposition}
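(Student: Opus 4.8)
The plan is to recognize Proposition~\ref{prop:CKSthm} as a direct specialization of \cite[Theorem~2.5]{Collevecchio2020branching} --- a general recurrence/transience dichotomy for nearest-neighbour Markov chains on trees whose restriction to every ray is a birth--death chain with up-probabilities prescribed by a function $\psi\colon E\to[0,1]$ --- once we check that RWRC fits that framework with $\psi=\psi_{RC}$. The one substantive point to verify is the probabilistic meaning of $\psi_{RC}$ and $\Psi_{RC}$: because $\cT$ is a tree, every excursion of the walk from a vertex $v$ into a subtree containing neither the current target nor $\varrho$ returns to $v$, so, by the strong Markov property, the probability that RWRC started at $e^-$ reaches $e^+$ before $\varrho$ is unchanged if we delete all side-branches, i.e.\ it equals the same probability for the birth--death chain on $[\varrho,e^+]$ with conductances $(C_g)_{g\le e}$; by the series law for resistances this equals $\sum_{g<e}C_g^{-1}/\sum_{g\le e}C_g^{-1}=\psi_{RC}(e)$ (and it is $1$ when $|e|=1$). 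Consequently the ray-trace of RWRC is exactly the birth--death chain with up-probabilities $\psi_{RC}$, and $\Psi_{RC}(e)=\prod_{g\le e}\psi_{RC}(g)=\bigl(\sum_{g\le e}C_g^{-1}\bigr)^{-1}$ is the probability that RWRC started at $\varrho$ reaches $e^+$ before returning to $\varrho$ --- precisely the quantity through which \cite{Collevecchio2020branching} define the critical exponent $RT$.

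Granting this identification, the two halves follow as in \cite{Collevecchio2020branching}. For the recurrence half one uses the cutset upper bound on the escape probability: for any cutset $\pi\in\Pi$, the event that RWRC started at $\varrho$ never returns to $\varrho$ is contained in $\bigcup_{e\in\pi}\{\text{RWRC reaches }e^+\text{ before returning to }\varrho\}$, so $\bbP_{\varrho}[\text{no return}]\le\sum_{e\in\pi}\Psi_{RC}(e)$; since $\Psi_{RC}(e)\le1$, the map $\gamma\mapsto\Psi_{RC}(e)^{\gamma}$ is non-increasing, hence $\{\gamma>0:\ \inf_{\pi}\sum_{e\in\pi}\Psi_{RC}(e)^{\gamma}>0\}$ is a sub-interval of $(0,\infty)$ containing $(0,RT(\cT,\psi_{RC}))$, so $RT(\cT,\psi_{RC})<1$ forces $\inf_{\pi}\sum_{e\in\pi}\Psi_{RC}(e)=0$, whence $\bbP_{\varrho}[\text{no return}]=0$ and RWRC is recurrent. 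For the transience half one fixes $\gamma\in(1,RT(\cT,\psi_{RC}))$, so that $\inf_{\pi}\sum_{e\in\pi}\Psi_{RC}(e)^{\gamma}=\delta>0$, and runs a second-moment (Paley--Zygmund) estimate along a nested exhausting sequence of cutsets for the number of edges $e\in\pi$ the walk ``traverses cleanly'' (reaches $e^+$ before first returning to $\varrho$): its first moment is $\sum_{e\in\pi}\Psi_{RC}(e)$ and its second moment is controlled by $\sum_{e,e'\in\pi}\Psi_{RC}(e)\Psi_{RC}(e')/\Psi_{RC}(e\wedge e')$, and the extra exponent $\gamma>1$ assumed in the hypothesis is exactly what dominates the ancestral (diagonal) contributions, yielding a uniform lower bound on the probability of ever crossing $\pi$ and hence a strictly positive escape probability.

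In transporting the statement to our random setting one applies \cite[Theorem~2.5]{Collevecchio2020branching} conditionally on the environment $(C_e)_{e\in E}$; since $\psi_{RC}$, $\Psi_{RC}$ and $RT(\cT,\psi_{RC})$ are measurable functions of $(C_e)_{e\in E}$, the hypothesis $RT(\cT,\psi_{RC})<1$ $\bbP$-a.s.\ gives $\bbP_{\varrho}[\text{no return}\mid(C_e)_{e\in E}]=0$ $\bbP$-a.s., i.e.\ recurrence a.s., and $RT(\cT,\psi_{RC})>1$ on a positive-probability event gives transience there, which is automatically a.s.\ because recurrence of RWRC --- being invariant under modification of finitely many conductances --- is a tail event of the i.i.d.\ family $(C_e)_{e\in E}$, hence of probability $0$ or $1$.

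The step I expect to require the most care is not in this proposition itself --- whose content is essentially a citation --- but in the clean reduction of the ray-trace of RWRC to a $\psi_{RC}$-birth--death chain (discarding side-excursions via the tree structure and the strong Markov property) and, inside the cited argument, the second-moment bookkeeping of the transience half; everything else (measurability in the environment, the $0$--$1$ law) is routine.
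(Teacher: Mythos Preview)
Your proposal is correct and matches the paper's treatment: the paper does not prove Proposition~\ref{prop:CKSthm} at all but simply imports it verbatim from \cite[Theorem~2.5]{Collevecchio2020branching}, stated ``in our setting''. Your additional verification that the ray-trace of RWRC is the birth--death chain with up-probabilities $\psi_{RC}$, together with the sketch of the cutset/second-moment argument and the $0$--$1$ tail reasoning, goes beyond what the paper provides but is entirely consistent with it.
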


\subsection{Recurrence}

\begin{proposition}\label{prop:recurRWRC}
If $\gl> \ga=\IBN(\cT)$, then RWRC associated with $\psi_{RC}$ is recurrent almost surely.
\end{proposition}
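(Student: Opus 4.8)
The plan is to apply the recurrence criterion of Proposition \ref{prop:CKSthm}, so it suffices to show that $RT(\cT, \psi_{RC}) < 1$ almost surely. Since $\Psi_{RC}(e) = \big(\sum_{g \le e} C_g^{-1}\big)^{-1}$, the cutset sums defining $RT$ become $\sum_{e \in \pi}\big(\sum_{g\le e}C_g^{-1}\big)^{-\gamma}$, and we must show that for every $\gamma > 1$ the infimum over cutsets $\pi \in \Pi$ of these sums is zero, $\bbP$-a.s. Because $\gl = \IBN(\cT)$, we already know from \eqref{def:IBN} that for any $\gl' > \gl$ we have $\inf_{\pi}\sum_{e\in\pi}\exp(-|e|^{\gl'}) = 0$; so the strategy is to compare $\Psi_{RC}(e)^{\gamma}$ against $\exp(-|e|^{\gl'})$ along a well-chosen sequence of cutsets, pushing the exponent slightly above $\gl$ but keeping it below some $\gl' < $ any fixed target (here we may pick $\gl'$ anywhere in $(\gl, 1)$ since $\gl < 1$).

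The key steps, in order, are as follows. First, a deterministic lower bound: $\sum_{g \le e} C_g^{-1} \ge \max_{g \le e} C_g^{-1} \ge C_{g_0}^{-1}$ for any single ancestor edge $g_0 \le e$; more usefully, by the tail assumption \eqref{asm:conduct}, for a typical edge at depth $n$ the running maximum $\max_{g\le e} C_g^{-1}$ is at least $\exp(c\, n^{\gl})$ for a suitable constant. Second, the Borel–Cantelli / union-bound estimate: fix $\gep > 0$ small and consider the event that some edge $e$ at generation $n$ has $\sum_{g \le e} C_g^{-1} < \exp(n^{\gl})$; using \eqref{asm:conduct} (so $\bbP[C_1^{-1} > \exp(t^{\gl})] = L(t)t^{\gl-1}$) together with the growth bound $\#E_n \le \exp(n^{\gl + \gep})$ valid infinitely often along the liminf in \eqref{growparam} (note $\IBN(\cT) \le \Igr(\cT)$, and intermediate growth with parameter $\ga$ forces $\log\log\# B(n)/\log n \to \ga$ along a subsequence), show the probability that \emph{all} edges at generation $n$ have large $\Psi_{RC}$ — i.e. $\sum_{g\le e}C_g^{-1} \ge \exp(n^{\gl})$ for every $e \in E_n$ — is positive, in fact tending to $1$ or at least bounded below along a subsequence $n_k$. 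The relevant computation is that $\bbP[\sum_{g\le e}C_g^{-1} < \exp(n^{\gl})]$ is small because it requires \emph{all} $n$ ancestor edges to have $C_g^{-1} < \exp(n^{\gl})$, an event of probability $(1 - L(n)n^{\gl-1})^{n} \approx \exp(-n^{\gl}L(n))$, and then a union bound over the $\exp(n^{\gl+\gep})$ edges of $E_n$ still leaves room — this is where the slowly varying $L$ and the precise matching of exponents $\gl$ vs. the volume exponent must be handled carefully. Third, on the (positive-probability, along $n_k$) event that every $e \in E_{n_k}$ has $\Psi_{RC}(e) \le \exp(-n_k^{\gl})$, we get $\sum_{e \in E_{n_k}}\Psi_{RC}(e)^{\gamma} \le \#E_{n_k}\,\exp(-\gamma n_k^{\gl}) \le \exp(n_k^{\gl+\gep} - \gamma n_k^{\gl}) \to 0$ provided $\gep$ is chosen with $\gl + \gep < \gl$... which is impossible, so the right move is instead to use the cutsets from \eqref{def:IBN} directly: take disjoint cutsets $\pi_m$ with $\sum_{e\in\pi_m}\exp(-|e|^{\gl'}) \le 2^{-m}$ and show $\Psi_{RC}(e)^{\gamma} \le \exp(-|e|^{\gl'})$ for all $e$ deep enough, a.s., via Borel–Cantelli applied edge-by-edge along $\pi_m$. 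Concretely, the event $\{\Psi_{RC}(e)^\gamma > \exp(-|e|^{\gl'})\} = \{\sum_{g\le e}C_g^{-1} < \exp(|e|^{\gl'}/\gamma)\}$ has probability $\le (1 - L(m_e)m_e^{\gl-1})^{|e|}$ for an appropriate scale $m_e \asymp |e|^{\gl'/(\gamma\gl)}$, which is summable over $e$ in a fixed cutset once $\gl' < \gamma \gl$; since $\gamma > 1$ we may pick $\gl' \in (\gl, \min(1,\gamma\gl))$, and then conclude $RT(\cT,\psi_{RC}) \le \gl' \cdot$(something)$< 1$ after optimizing — or more cleanly, conclude $RT(\cT,\psi_{RC}) \le 1$ by letting $\gamma \downarrow 1$ and $\gl' \downarrow \gl$, which combined with $\gl < 1$ gives strict inequality.

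The main obstacle I expect is the second/third step: making the Borel–Cantelli union bound work uniformly over cutsets. The cutsets $\pi_m$ achieving small $\exp(-|e|^{\gl'})$-sums can have very many edges and edges at wildly varying depths, so one cannot naively sum failure probabilities over all edges of all cutsets. The resolution — following the blueprint of \cite{Collevecchio2019branching} — is presumably to fix \emph{one} sequence of disjoint cutsets $(\pi_m)$ first (coming from the definition of $\IBN(\cT)$ with exponent $\gl'$), then for each $m$ show that with probability $\to 1$ every edge $e \in \pi_m$ satisfies the desired bound $\Psi_{RC}(e)^{\gamma}\le \exp(-|e|^{\gl'})$ — using that each $e \in \pi_m$ lies at depth $\ge$ some $\ell_m \to \infty$ and $\sum_{e\in\pi_m}\exp(-|e|^{\gl'})$ is tiny, so even a crude union bound over $\pi_m$ of per-edge failure probabilities (each itself exponentially small in the edge depth) is controlled — and finally extract an a.s. statement by a further Borel–Cantelli over $m$. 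Getting the interplay of the three exponents ($\gl$ from the conductance tail, $\gl'$ from the chosen cutsets, $\gamma$ from $RT$) lined up so that everything is simultaneously summable, while only using that $\gl < 1$ and $\gamma > 1$ with $\gl' $ free in between, is the delicate bookkeeping at the heart of the argument.
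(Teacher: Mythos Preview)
Your eventual strategy --- fix cutsets $(\pi_m)$ coming from the definition of $\IBN(\cT)$ with exponent just above $\alpha$, then use Borel--Cantelli over $m$ to show that a.s.\ for large $m$ every $e\in\pi_m$ satisfies $\Psi_{RC}(e)\le\exp(-|e|^{\gl})$ --- is exactly the paper's approach, and the probability estimate $\bbP[\sum_{g\le e}C_g^{-1}<\exp(|e|^{\gl})]\le(1-L(|e|)|e|^{\gl-1})^{|e|}\le\exp(-L(|e|)|e|^{\gl})$ is the right one.

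There is, however, a genuine gap in how you close the argument. To apply Proposition~\ref{prop:CKSthm} you need $RT(\cT,\psi_{RC})<1$ \emph{strictly}. Showing $\inf_\pi\sum_{e\in\pi}\Psi_{RC}(e)^{\gamma}=0$ for every $\gamma>1$ only yields $RT\le 1$, and your sentence ``combined with $\gl<1$ gives strict inequality'' is a non sequitur: the parameter $\gl$ governs the conductance tail, not the value of $RT$. The fix is immediate once you notice that the direction of $\gamma$ is backwards. Since $\Psi_{RC}(e)\le 1$, the sums $\sum_{e\in\pi}\Psi_{RC}(e)^{\gamma}$ are \emph{decreasing} in $\gamma$; to prove $RT<1$ you must exhibit some $\gamma<1$ for which the infimum vanishes. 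The paper simply takes $\gamma=1-\gep$: once you know $\Psi_{RC}(e)\le\exp(-|e|^{\gl})$ on $\pi_m$, you get
\[
\sum_{e\in\pi_m}\Psi_{RC}(e)^{1-\gep}\le\sum_{e\in\pi_m}\exp\bigl(-(1-\gep)|e|^{\gl}\bigr)\le C\sum_{e\in\pi_m}\exp\bigl(-|e|^{\alpha+\gep}\bigr)\to 0,
\]
using only $\gl>\alpha+\gep$ (so choose $\gep$ with $\alpha+2\gep<\gl$). No optimization over $\gamma$ or $\gl'$ is needed, and the bad event $\{\Psi_{RC}(e)>\exp(-|e|^{\gl})\}$ should be taken independent of $\gamma$ rather than as $\{\Psi_{RC}(e)^{\gamma}>\exp(-|e|^{\gl'})\}$, which needlessly entangles the exponents. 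The initial detour through the level cutsets $E_n$ and the volume growth is a dead end, as you noted; drop it.
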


\begin{proof}

For fixed $\gep>0$ sufficiently small, we want to apply Proposition \ref{prop:CKSthm} for which we need to show
\begin{equation}
\inf_{\pi \in \Pi} \sum_{e \in \pi}  \Psi_{RC}(e)^{1- \gep}=0.
\end{equation}
We first show that typically $\Psi_{RC}(e)$ is small:
\begin{multline}
\bbP \left[  \sum_{g \le e} C_g^{-1} \le \exp\left(  \vert  e \vert^{\gl} \right) \right] \le
 \bbP \left[  \bigcap_{g \le e} \left\{  C_g^{-1} \le \exp\left(  \vert  e \vert^{\gl} \right)  \right\} \right]\\
= \prod_{g \le e} \bbP \left[   \left\{  C_g^{-1} \le \exp\left(  \vert  e \vert^{\gl} \right)  \right\} \right] = \left(1 -  \bbP\left[  C_e < \exp \left(- \vert e \vert^{\gl} \right)  \right]\right)^{\vert e \vert}
\le \exp \left( - L(\vert e \vert) \vert e \vert^{\gl} \right)
\end{multline}
where we have used \eqref{asm:conduct} and the inequality $1-x \le \exp(-x)$ for all $x \in [0,1]$ in the last inequality.

Taking $\gep>0$ such that  $\ga + 2 \gep< \gl$, by definition of $\IBN$ in \eqref{def:IBN} there exists a sequence of cutsets $(\pi_n)_n \subset \Pi$ satisfying
\begin{equation}\label{choose:cutsets}
\forall n \in \bbN, \quad \quad \sum_{e \in \pi_n} \exp \left( - \vert e \vert^{\ga+ \gep} \right)\le \exp(-n).
\end{equation}
 Concerning $(\pi_n)_n \subset \Pi$, we have
\begin{multline}
\bbP \left[  \bigcup_{e \in \pi_n} \left\{\sum_{g \le e} C_g^{-1} \le \exp \left( \vert e \vert^{\gl} \right) \right\} \right]
\le \sum_{e \in \pi_n}  \bbP \left[   \sum_{g \le e} C_g^{-1} \le \exp \left( \vert e \vert^{\gl} \right)  \right]
\le \sum_{e \in \pi_n} \exp \left( - L(\vert e \vert) \vert e \vert^{\gl} \right),
\end{multline}
and then
\begin{multline}
\sum_{n=1}^{\infty} \bbP \left[  \bigcup_{e \in \pi_n} \left\{\sum_{g \le e} C_g^{-1} \le \exp \left( \vert e \vert^{\gl} \right) \right\} \right] \le \sum_{n=1}^{\infty} \sum_{e \in \pi_n} \exp \left( - L(\vert e \vert) \vert e \vert^{\gl} \right)
\le  \sum_{n=1}^{\infty} \sum_{e \in \pi_n} C \exp \left( - \vert e \vert^{\ga+ \gep} \right)< \infty,
\end{multline}
where we have used \eqref{choose:cutsets} and $\ga +\gep < \gl$.
By Borel-Cantelli Lemma, for almost surely all fixed realization $(C_e)_{e \in E}$, and all $n \ge N((C_e)_{e \in E})$, we have
$$ \forall e \in \pi_n, \quad \quad \sum_{g \le e}C_g^{-1}  \ge \exp \left(  \vert e \vert^{\gl}\right) $$
and thus
\begin{equation}
\liminf_{n \to \infty }\sum_{e \in \pi_n} \Psi_{RC}(e)^{1-\gep}=  \liminf_{n \to \infty } \sum_{e \in \pi_n} \left( \sum_{g \le e} C_g^{-1}  \right)^{-(1- \gep)}
\le \liminf_{n \to \infty } \sum_{e \in \pi_n} \exp \left(- (1-\gep) \vert e \vert^{\gl} \right)=0
\end{equation}
where the last equality is by $\gl > \ga$.
Therefore, we have $RT(\cT, \psi_{RC}) \le 1- \gep$ and conclude the proof by Proposition \ref{prop:CKSthm}.

\end{proof}

\subsection{Transience}
The goal of this subsection is the following proposition.
\begin{proposition} \label{prop:transience}
If $\gl< \ga=\IBN(\cT)$, then the RWRC associated with $\psi_{RC}$ is transient.
\end{proposition}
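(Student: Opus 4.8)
The plan is to establish transience via Proposition~\ref{prop:CKSthm}, so it suffices to show that with positive $\bbP$-probability (in fact, almost surely) we have $RT(\cT, \psi_{RC}) > 1$. Concretely, picking $\gep > 0$ small enough that $\gl + 2\gep < \ga = \IBN(\cT)$, I would aim to prove
\begin{equation*}
\inf_{\pi \in \Pi} \sum_{e \in \pi} \Psi_{RC}(e)^{1+\gep} > 0 \quad \bbP\text{-a.s.}
\end{equation*}
Since $\Psi_{RC}(e) = \left(\sum_{g \le e} C_g^{-1}\right)^{-1}$, this amounts to a \emph{lower} bound $\Psi_{RC}(e) \ge \exp(-\vert e\vert^{\gl'})$ for a suitable $\gl' \in (\gl, \ga)$, uniformly over all $e$ deep enough in the tree; equivalently an \emph{upper} bound $\sum_{g \le e} C_g^{-1} \le \exp(\vert e\vert^{\gl'})$. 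The difficulty is that this must hold \emph{simultaneously for all vertices} $e$, not just along one cutset, so a naive Borel--Cantelli over a single sequence of cutsets (as in the recurrence proof) is not enough — we have to control the sum of inverse conductances along every ray.

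The key tail estimate is the converse direction of \eqref{asm:conduct}: for $t \ge 1$, $\bbP[C_1^{-1} > \exp(t^{\gl})] = \bbP[C_1 < \exp(-t^{\gl})] = L(t) t^{\gl - 1}$, which is summable when we sum over $t \in \bbN$ because $\gl < 1$ and $L$ is slowly varying. So first I would show that almost surely, for all but finitely many edges $e \in E$, $C_e^{-1} \le \exp(\vert e\vert^{\gl''})$ for any fixed $\gl'' \in (\gl, \gl')$ — here one needs $\sum_e \bbP[C_e^{-1} > \exp(\vert e\vert^{\gl''})]$ to converge, which requires summability of $\#E_n \cdot \exp(-n^{\gl''} \cdot (\text{slowly varying correction}))$ type terms; since $\ga = \IBN(\cT) \le \Igr(\cT)$ controls $\#E_n \le \exp(n^{\ga + \gep})$ eventually, and $\gl'' > \ga$ is \emph{not} guaranteed, this crude bound fails in general. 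Instead I would work cutset-by-cutset: fix a sequence of cutsets $(\pi_n)$ witnessing $\IBN(\cT) > \gl + 2\gep$, i.e. $\inf_n \sum_{e \in \pi_n} \exp(-\vert e\vert^{\gl + 2\gep}) \ge \delta > 0$, and for each $e \in \pi_n$ estimate $\bbP[\sum_{g \le e} C_g^{-1} > \exp(\vert e\vert^{\gl + \gep})]$. Writing $M = \vert e\vert$, this probability is at most $M \cdot \bbP[C_1^{-1} > M^{-1}\exp(M^{\gl+\gep})] \le M \cdot L(M') (M')^{\gl-1}$ with $M^{-1}\exp(M^{\gl+\gep}) = \exp((M')^{\gl})$, so $(M')^{\gl} = M^{\gl+\gep} - \log M$ and $(M')^{\gl-1}$ decays like $\exp(-\tfrac{1-\gl}{\gl} M^{\gl+\gep}(1+o(1)))$, which is far smaller than $\exp(-M^{\gl+2\gep})$. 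Hence $\sum_{e \in \pi_n} \bbP[\Psi_{RC}(e) < \exp(-\vert e\vert^{\gl+\gep})] \le \sum_{e \in \pi_n}\exp(-\vert e\vert^{\gl+2\gep}) \cdot (\text{small}) \to 0$, and in fact is summable in $n$ after passing to a sparse subsequence of cutsets; by Borel--Cantelli, almost surely for all large $n$ and all $e \in \pi_n$ we get $\Psi_{RC}(e) \ge \exp(-\vert e\vert^{\gl+\gep})$.

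The main obstacle — and the reason the transience direction is genuinely harder than recurrence — is that $RT(\cT, \psi_{RC}) > 1$ requires the lower bound on $\Psi_{RC}$ to hold on \emph{every} cutset $\pi$, including adversarially chosen ones that "hide" in bad regions, whereas the Borel--Cantelli argument only controls the distinguished cutsets $\pi_n$. To bridge this I would invoke the max-flow/min-cut machinery: once I know $\liminf_n \sum_{e \in \pi_n} \Psi_{RC}(e)^{1+\gep} > 0$ along the distinguished $\pi_n$, I want to instead directly construct a unit-strength flow $\theta$ on $\cT$ with $\theta(e) \le \Psi_{RC}(e)^{1+\gep}$, or equivalently show $\inf_{\pi \in \Pi}\sum_{e \in \pi}\Psi_{RC}(e)^{1+\gep} > 0$, via \cite[Theorem 3.1]{Lyons2016trees}. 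The cleanest route is to compare $\Psi_{RC}(e)^{1+\gep}$ from below with a deterministic quantity: on the good event (all large edges $e$ in the tree satisfy $C_e^{-1} \le \exp(\vert e\vert^{\gl+\gep/2})$, argued along cutsets as above but now for \emph{all} $e$ by a union bound over $n$ of the events "some $e \in E_n$ is bad", using $\#E_n$ grows subexponentially and the tail is stretched-exponentially small) one gets $\sum_{g \le e} C_g^{-1} \le \vert e\vert \exp(\vert e\vert^{\gl+\gep/2}) \le \exp(\vert e\vert^{\gl+\gep})$ for all $e$ with $\vert e\vert$ large, hence deterministically (off a finite set) $\Psi_{RC}(e)^{1+\gep} \ge \exp(-(1+\gep)\vert e\vert^{\gl+\gep}) \ge \exp(-\vert e\vert^{\gl+2\gep})$. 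Then $\inf_{\pi}\sum_{e \in \pi}\Psi_{RC}(e)^{1+\gep} \ge c\,\inf_{\pi}\sum_{e\in\pi}\exp(-\vert e\vert^{\gl+2\gep}) > 0$ since $\gl+2\gep < \IBN(\cT)$, giving $RT(\cT,\psi_{RC}) \ge 1+\gep > 1$ and transience by Proposition~\ref{prop:CKSthm}. The one delicate point to get right is the union bound $\sum_n \#E_n \sup_{e \in E_n}\bbP[C_e^{-1} > \exp(n^{\gl+\gep/2})]$: here $\#E_n$ can be as large as $\exp(n^{\ga-\gep})$-ish but the per-edge tail is $\exp(-\tfrac{1-\gl}{\gl}n^{\gl+\gep/2}(1+o(1)))$, and since $\gl + \gep/2$ need not exceed $\ga$, one must instead run this union bound \emph{only along the sparse distinguished cutsets} $\pi_n$ and then upgrade via the flow construction restricted to the region between consecutive $\pi_n$'s — this localization is the technical heart of the proof.
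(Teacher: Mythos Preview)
Your tail estimate contains a decisive error. You write that $(M')^{\gl-1}$ "decays like $\exp(-\tfrac{1-\gl}{\gl}M^{\gl+\gep}(1+o(1)))$", but from $(M')^{\gl}\approx M^{\gl+\gep}$ one gets $M'\approx M^{(\gl+\gep)/\gl}$ and hence $(M')^{\gl-1}\approx M^{(\gl+\gep)(\gl-1)/\gl}$, which is only a \emph{polynomial} decay in $M=|e|$. Equivalently, from \eqref{asm:conduct} one has $\bbP[C_e^{-1}>\exp(|e|^{\gl+\gep/2})]=L(t)\,t^{\gl-1}$ with $t=|e|^{(\gl+\gep/2)/\gl}$, so the per-edge "bad" probability is of order $|e|^{-(1-\gl)(1+\gep/(2\gl))}$ up to slowly varying factors. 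This is far too weak to beat the size of any cutset in an intermediate growth tree: a cutset near depth $n$ carries roughly $\exp(n^{\ga'})$ edges for some $\ga'<\ga$, and a polynomial-in-$n$ tail cannot make the union bound $\sum_{e\in\pi_n}\bbP[\text{bad}]$ small, let alone summable. The same obstruction kills your proposed "sparse distinguished cutsets" fix --- sparsifying in $n$ does not help when each individual cutset already has super-polynomially many edges. So your plan of establishing $\Psi_{RC}(e)\ge\exp(-|e|^{\gl'})$ for \emph{all} deep $e$ (or even for all $e$ in a sequence of cutsets) simply cannot go through by Borel--Cantelli.

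The paper circumvents this by a percolation argument rather than a union bound. One declares $e$ open when $C_e^{-1}\le\exp(|e|^{\gl})$; this is an independent (hence quasi-independent) edge percolation with $\psi_C(e)=1-L(|e|)|e|^{\gl-1}\ge 1-C|e|^{-(1-\ga+\gep)}$ for $\gl=\ga-2\gep$. Proposition~\ref{prop:prepercolate} (a second-moment/quasi-independence result in the spirit of \cite[Theorem~5.14]{Lyons2016trees}) then shows that with positive probability the cluster $\cC(\varrho)$ is infinite and satisfies $\IBN(\cC(\varrho))\ge\ga-\gep$. On this random subtree every edge is good \emph{by construction}, so $\sum_{g\le e}C_g^{-1}\le|e|\exp(|e|^{\gl})$ and $\Psi_{RC}(e)^{1+\delta}\ge c\exp(-|e|^{\ga-3\gep/2})$ for $e\in\cC(\varrho)$; since any cutset of $\cT$ restricts to a cutset of $\cC(\varrho)$, and $\ga-3\gep/2<\IBN(\cC(\varrho))$, one concludes $RT(\cT,\psi_{RC})>1$ with positive probability. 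Kolmogorov's $0$--$1$ law upgrades this to almost surely. The point is that the second-moment percolation machinery replaces the hopeless first-moment union bound; this is the step your proposal is missing.
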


We adopt the definition of quasi-independence in \cite[Definition 4.1]{Collevecchio2019branching}.
\begin{definition} An edge percolation is quasi-independent if there exists a constant $C_Q \in (0, \infty)$ such that for any two edges $e_1,e_2 \in E$ with their common ancestor $e_1 \wedge e_2$ satisfy
\begin{equation}
\begin{aligned}
\bP \left( e_1, e_2 \in C(\varrho) \ \vert \ e_1\wedge e_2 \in C(\varrho)   \right) \le C_Q \cdot & \bP \left( e_1 \in C(\varrho) \ \vert \ e_1\wedge e_2 \in C(\varrho)   \right) \\
\cdot &\bP \left( e_2 \in C(\varrho) \ \vert \ e_1\wedge e_2 \in C(\varrho)   \right),
\end{aligned}
\end{equation}
where  $C(\varrho)$ is the connected component of the root  $\varrho$.
\end{definition}

\begin{proposition}[Proposition 4.2 in \cite{Collevecchio2019branching}]\label{prop:CHK4.2}
Consider an edge-percolation (not necessarily independent), such that
edges at generation $1$ are open almost surely, and for $e_1 \in E$ with $\vert e_1\vert>1$
\begin{equation}\label{percprob:openedge}
\bP \left( e_1 \in C(\varrho) \ \vert \ e_0 \in C(\varrho) \right)= \psi(e_1)>0
\end{equation}
where $e_0 \sim e_1$ and $e_0 < e_1$. If $RT(\cT, \psi)<1$, then $C(\varrho)$ is finite almost surely. If the percolation is quasi-independent and $RT(\cT, \psi)>1$, then $C(\varrho)$ is infinite with positive $\bP-$probability.
\end{proposition}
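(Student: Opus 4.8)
The plan is to prove the two assertions separately by moment methods, following the classical treatment of (quasi-Bernoulli) percolation on trees in \cite{Lyons1990} and \cite[Chapter 5]{Lyons2016trees}. Two preliminary facts are used throughout. Since generation-$1$ edges are open almost surely, telescoping \eqref{percprob:openedge} along $[\varrho,e]$ gives the marginal $\bP(e\in C(\varrho))=\Psi(e)$, and more generally $\bP(e\in C(\varrho)\mid f\in C(\varrho))=\Psi(e)/\Psi(f)$ for $f\le e$; I would spend a short paragraph deducing this conditional identity from the path--Markov structure of the percolation, since only the one-step statement \eqref{percprob:openedge} is assumed (it does hold in the applications, e.g.\ for $\psi_{RC}$, where $\psi_{RC}(e)$ depends only on the conductances along $[\varrho,e]$). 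Also, since $\Psi\le 1$, the map $\gamma\mapsto\inf_{\pi\in\Pi}\sum_{e\in\pi}\Psi(e)^{\gamma}$ is non-increasing, so $\{\gamma>0:\inf_{\pi}\sum_{e\in\pi}\Psi(e)^{\gamma}>0\}$ is an initial segment of $(0,\infty)$ with right endpoint $RT(\cT,\psi)$.

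For the finiteness claim, assume $RT(\cT,\psi)<1$; then $\gamma=1$ lies beyond that initial segment, so $\inf_{\pi\in\Pi}\sum_{e\in\pi}\Psi(e)=0$. If $C(\varrho)$ is infinite it contains an edge of every cutset $\pi$, so a union bound and the marginal formula give
\[
\bP\bigl(|C(\varrho)|=\infty\bigr)\le\inf_{\pi\in\Pi}\sum_{e\in\pi}\bP(e\in C(\varrho))=\inf_{\pi\in\Pi}\sum_{e\in\pi}\Psi(e)=0 .
\]
This part uses neither quasi-independence nor max-flow--min-cut.

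For the infiniteness claim, assume quasi-independence and $RT(\cT,\psi)>1$, and fix $\gamma\in(1,RT(\cT,\psi))$, so $\delta\colonequals\inf_{\pi\in\Pi}\sum_{e\in\pi}\Psi(e)^{\gamma}>0$. By the max-flow--min-cut theorem on trees (\cite[Theorem 3.1]{Lyons2016trees} with capacities $c(e)=\Psi(e)^{\gamma}$) there is an admissible flow $\theta$ from $\varrho$ to $\infty$ with $0\le\theta(e)\le\Psi(e)^{\gamma}$ and $\mathrm{Strength}(\theta)=\delta$; let $\hat\theta\colonequals\theta/\delta$ be the associated unit flow. Choose cutsets $(\pi_n)_{n\ge1}$ with $\min_{e\in\pi_n}|e|\to\infty$ (e.g.\ $\pi_n=E_n$) and put $M_n\colonequals\sum_{e\in\pi_n}\bigl(\hat\theta(e)/\Psi(e)\bigr)\ind_{\{e\in C(\varrho)\}}$; flow conservation gives $\bE[M_n]=\sum_{e\in\pi_n}\hat\theta(e)=1$. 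For the second moment, bound the diagonal directly and organise the off-diagonal part of $\sum_{\pi_n\times\pi_n}$ by the strict common ancestor $f=e_1\wedge e_2$: quasi-independence together with the marginal/conditional formulas yields $\bP(e_1,e_2\in C(\varrho))\le C_Q\,\Psi(e_1)\Psi(e_2)/\Psi(f)$, and since no edge of $\pi_n$ is an ancestor of $f$, flow conservation in the subtree below $f$ gives $\sum_{e\in\pi_n,\,e>f}\hat\theta(e)=\hat\theta(f)$; collecting terms,
\[
\bE[M_n^2]\le (1+C_Q)\sum_{f\in E}\frac{\hat\theta(f)^2}{\Psi(f)} \equalscolon (1+C_Q)\,\mathcal{E} .
\]
If $\mathcal{E}<\infty$, Cauchy--Schwarz (Paley--Zygmund) gives $\bP(C(\varrho)\cap\pi_n\neq\emptyset)\ge\bE[M_n]^2/\bE[M_n^2]\ge\bigl((1+C_Q)\mathcal{E}\bigr)^{-1}$ for all $n$; since $\cT$ is locally finite, $\{|C(\varrho)|=\infty\}=\bigcap_n\{C(\varrho)\cap\pi_n\neq\emptyset\}$ by K\"onig's lemma, and these events decrease in $n$, so $\bP(|C(\varrho)|=\infty)\ge\bigl((1+C_Q)\mathcal{E}\bigr)^{-1}>0$.

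The main obstacle is the finiteness of the flow energy $\mathcal{E}=\sum_{f}\hat\theta(f)^2/\Psi(f)$. From $\hat\theta(f)\le\Psi(f)^{\gamma}/\delta$ one gets $\hat\theta(f)^2/\Psi(f)\le\delta^{-1}\hat\theta(f)\,\Psi(f)^{\gamma-1}$, so the issue is to convert the strict gain $\gamma-1>0$ into summability; grouping the edges into dyadic shells $\{2^{-k-1}<\Psi(f)\le2^{-k}\}$ turns $\mathcal{E}$ into a geometric series in $k$ once the flow crossing each shell is controlled, which is where the choice of $\theta$ (saturating the capacities $\Psi^{\gamma}$) and a second use of $RT(\cT,\psi)>1$ come in. In full generality this is precisely the finite-energy/boundary-capacity criterion for quasi-independent percolation on trees; I would carry it out as in \cite{Collevecchio2019branching} (cf.\ also \cite{Lyons1990}, \cite[Chapter 5]{Lyons2016trees}). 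Together with the verification of the conditional identity $\bP(e\in C(\varrho)\mid f\in C(\varrho))=\Psi(e)/\Psi(f)$, this is the only non-routine point.
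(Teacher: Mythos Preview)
The paper does not prove this proposition; it is quoted verbatim from \cite{Collevecchio2019branching} and used as a black box. So there is no ``paper's own proof'' to compare to, and I evaluate your argument on its merits.

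Your subcritical half is fine. Also, your worry about the conditional identity is unnecessary: on a tree, $\{e\in C(\varrho)\}\subset\{f\in C(\varrho)\}$ whenever $f\le e$, so $\bP(e\in C(\varrho)\mid f\in C(\varrho))=\bP(e\in C(\varrho))/\bP(f\in C(\varrho))=\Psi(e)/\Psi(f)$ follows immediately from the marginals you already derived; no Markov property is needed.

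The genuine gap is in the supercritical half. Your energy $\mathcal{E}=\sum_{f\in E}\hat\theta(f)^2/\Psi(f)$ can be infinite even when $RT(\cT,\psi)>1$. Take $\cT$ a single ray with $\psi(e_1)=1$, $\psi(e_2)=c\in(0,1)$, $\psi(e_n)=1$ for $n\ge3$; then $\Psi(e_n)=c$ for $n\ge2$, so $RT=\infty$, yet the only unit flow has $\hat\theta\equiv1$ and $\mathcal{E}=\sum_n 1/\Psi(e_n)=\infty$. Your dyadic-shell suggestion cannot rescue this: all edges $e_n$, $n\ge2$, lie in a single shell and the flow through it is infinite. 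The overcount happens when you pass from ``sum over common ancestors $f=e_1\wedge e_2$'' to ``sum over all $f\in E$''; only branching edges actually arise as strict common ancestors.

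The fix is not to bound more cleverly but to use the exact identity. Writing $1/\Psi(f)=1+\sum_{g\le f}(1-\psi(g))/\Psi(g)$ and interchanging sums gives
\[
\sum_{e_1,e_2\in\pi_n}\frac{\hat\theta(e_1)\hat\theta(e_2)}{\Psi(e_1\wedge e_2)}
=1+\sum_{g}\frac{(1-\psi(g))\,\hat\theta(g)^2}{\Psi(g)}\equalscolon\mathcal{E}''.
\]
Now use $\hat\theta(g)\le\Psi(g)^{\gamma}/\delta$ and view $\hat\theta$ as a measure on rays: along any ray $(g_i)$ with $a_i\colonequals\Psi(g_i)$ one has the elementary inequality $(1-r)r^{\gamma-1}\le(\gamma-1)^{-1}(1-r^{\gamma-1})$ for $r=a_i/a_{i-1}\in(0,1]$, which telescopes to $\sum_i(1-\psi(g_i))\Psi(g_i)^{\gamma-1}\le(\gamma-1)^{-1}$. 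Hence $\mathcal{E}''\le 1+(\delta(\gamma-1))^{-1}<\infty$, and Paley--Zygmund concludes as you intended. This is the computation that underlies \cite{Collevecchio2019branching}; your outline was on the right track but stumbled on the bookkeeping of the second moment.
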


\begin{cor}\label{cor:subcritical}
 Let $\cT$ be a locally finite, infinite tree with $\alpha:=\IBN(\cT) \in (0,1)$. Fix a parameter $z \in (0, \infty)$ and perform a percolation on the edges (not necessarily independent) satisfying \eqref{percprob:openedge}. Assume there exists $n_0 \in \bbN$ and $c>0$ such that any $e\in E$ with $\vert e \vert >n_0$ satisfies
\begin{equation}\label{upbdasump:openprob}
\psi(e) \le 1-c \vert e \vert^{-z}.
\end{equation}
If $z \in (0, 1-\ga) $,  then the percolation is subcritical.
\end{cor}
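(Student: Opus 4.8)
\textbf{Proof plan for Corollary \ref{cor:subcritical}.}

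The plan is to reduce the statement to Proposition \ref{prop:CHK4.2} by showing that the hypothesis $z \in (0, 1-\ga)$ forces $RT(\cT, \psi) < 1$, where $\Psi(e) = \prod_{g \le e} \psi(g)$. Once this is established, the ``subcritical'' conclusion (i.e. $C(\varrho)$ finite almost surely) is immediate from the first bullet of Proposition \ref{prop:CHK4.2}, and no quasi-independence is needed. So the entire content is an estimate on $RT(\cT, \psi)$.

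First I would convert the pointwise bound \eqref{upbdasump:openprob} into a bound on $\Psi(e)$. For $|e| = n > n_0$, taking logarithms and using $\log(1-x) \le -x$,
\begin{equation*}
\log \Psi(e) = \sum_{g \le e} \log \psi(g) \le -\sum_{k=n_0+1}^{n} c\, k^{-z} + O(1) \le -c'\, n^{1-z} + O(1),
\end{equation*}
since $\sum_{k=1}^n k^{-z} \asymp n^{1-z}$ for $z \in (0,1)$; here the $O(1)$ absorbs the contributions of the generations $\le n_0$ (bounded by a constant depending only on $n_0$, $\cT$, and the values $\psi(g)$ on that finite initial part — these are positive by \eqref{percprob:openedge}, hence bounded below). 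Thus there is a constant $c' > 0$ with $\Psi(e) \le C_0 \exp(-c'\, |e|^{1-z})$ for all $e \in E$.

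Next I would compare this with the definition of $\IBN(\cT)$. Fix any exponent $\gl$ with $z < 1-\gl < 1-\ga$, which is possible precisely because $z < 1-\ga$; equivalently $\ga < \gl < 1-z$. Then $1-z > \gl$, so for every $\gamma \ge 1$ and every $e$,
\begin{equation*}
\Psi(e)^{\gamma} \le C_0^{\gamma} \exp\left(-c'\, \gamma\, |e|^{1-z}\right) \le C_0^{\gamma}\exp\left(-|e|^{\gl}\right)
\end{equation*}
for all $e$ with $|e|$ large enough (depending on $\gamma$), since $|e|^{1-z}/|e|^{\gl} \to \infty$; the finitely many small-$|e|$ edges only change the sum in any fixed cutset $\pi$ by a bounded amount, and in fact one can restrict attention to cutsets contained in $\{|e| \ge N\}$ for $N$ as large as desired without changing the relevant infimum. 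Because $\gl > \ga = \IBN(\cT)$, the definition \eqref{def:IBN} gives $\inf_{\pi \in \Pi} \sum_{e \in \pi} \exp(-|e|^{\gl}) = 0$, and therefore $\inf_{\pi \in \Pi} \sum_{e \in \pi} \Psi(e)^{\gamma} = 0$ for this $\gamma \ge 1$. Hence $RT(\cT, \psi) \le \gamma$... — wait, I need strict inequality below $1$, so instead I choose $\gamma = 1$ from the start: the display above with $\gamma = 1$ already gives $\Psi(e) \le \exp(-|e|^{\gl})$ for large $|e|$, whence $\inf_\pi \sum_{e \in \pi} \Psi(e)^{1} = 0$, and more generally $\inf_\pi \sum_{e \in \pi} \Psi(e)^{\gamma} = 0$ for every $\gamma \le 1$ as well by monotonicity once we also note $\Psi(e) \le 1$. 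Actually the cleanest route: for any $\gamma > 0$, $\Psi(e)^\gamma \le C_0^\gamma \exp(-c'\gamma |e|^{1-z}) \le \exp(-|e|^{\gl})$ for $|e|$ large; so $RT(\cT,\psi) \le \gamma$ would follow if the infimum over cutsets were $0$, which it is — but this shows $RT \le \gamma$ for all $\gamma>0$, i.e. $RT(\cT,\psi) = 0 < 1$. That is even stronger than needed.

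The one point requiring a little care — and the only real obstacle — is the handling of the finitely many generations $g$ with $|g| \le n_0$ (or more precisely, the edges in a cutset that lie below level $N$): one must argue that replacing a general cutset by one that agrees with it above level $N$ does not increase the relevant sum by more than a controlled factor, or simply observe that $\inf_{\pi \in \Pi} \sum_{e\in\pi} f(e) = \inf_{\pi \in \Pi,\ \pi \subset \{|e|\ge N\}} \sum_{e\in\pi} f(e)$ for any nonnegative $f$ with $f(e) \le 1$, because refining a cutset past level $N$ can only... this needs the right monotonicity statement, already implicit in \cite[Chapter 3]{Lyons2016trees}. Granting that, the estimate $RT(\cT,\psi) < 1$ holds, and Proposition \ref{prop:CHK4.2} finishes the proof. \qed
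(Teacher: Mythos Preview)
Your proposal is correct and follows the paper's route exactly: bound $\Psi(e) \le C\exp(-c'|e|^{1-z})$ via $\log(1-x)\le -x$ and the integral comparison $\sum_{k\le n} k^{-z}\asymp n^{1-z}$, then use $1-z > \alpha$ to get $\inf_{\pi}\sum_{e\in\pi}\Psi(e)^{\gamma}=0$ for every $\gamma>0$, hence $RT(\cT,\psi)<1$, and invoke Proposition~\ref{prop:CHK4.2}. The one point you flag is resolved in the paper just as you suggest: any cutset $\pi$ containing an edge at depth $\le N$ satisfies $\sum_{e\in\pi}\exp(-|e|^{\lambda})\ge\exp(-N^{\lambda})>0$, so the overall infimum (which is $0$ since $\lambda>\alpha$) is already attained on cutsets lying entirely beyond depth $N$, where your uniform estimate applies.
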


\begin{proof}

For a cutset $\pi$, define $\vert \pi \vert \colonequals \inf \{\vert e \vert: \ e \in \pi \}.$ For $\gamma>0$, we have
$$ \inf_{\pi \in \Pi: \ \vert \pi \vert  \le n_0} \sum_{e \in \pi} \exp \left( - \vert e \vert^{\gamma} \right)  \ge \exp \left( -n_0^{\gamma} \right)>0.$$
Thus, for all $\gamma > \ga$,
$$ \inf_{\pi \in \Pi:\ \vert \pi \vert > n_0} \sum_{e \in \pi} \exp \left( - \vert e \vert^{\gamma} \right)=\inf_{\pi \in \Pi} \sum_{e \in \pi} \exp \left( - \vert e \vert^{\gamma} \right)=0.   $$

Moreover, for any $\gb>0$ and $z \in (0, 1- \ga)$, there exists a constant $C>0$ such that for all $e \in E$ with $\vert e\vert = n_0$
$$ \prod_{g \le e} \psi(g)  \le C \prod_{g \le e} \left( 1-c \vert  g\vert^{-z} \right). $$
We have
\begin{multline}
\inf_{\pi \in \Pi: \ \vert \pi \vert>n_0} \sum_{e \in \pi} \prod_{g \le e} \psi(e)^{\gb}
\le C^{\gb}  \inf_{\pi \in \Pi: \ \vert \pi \vert>n_0} \sum_{e \in \pi} \prod_{g \le e} \left( 1-c \vert  g\vert^{-z} \right)^{\gb} \\
\le \inf_{\pi \in \Pi: \ \vert \pi \vert>n_0} C^{\gb}  \sum_{e \in \pi} \exp \left( - c \gb\sum_{g \le e} \vert g \vert^{-z}  \right)
\le  \inf_{\pi \in \Pi: \ \vert \pi \vert>n_0} C' \sum_{e \in \pi} \exp \left( -\frac{c \gb}{1-z} \vert e \vert^{1-z} \right)=0
\end{multline}
where the first inequality uses \eqref{upbdasump:openprob}, the second inequality uses $1-x \le e^{-x}$ for all $x \ge 0$, and the last inequality is by integral approximation and $1-z >\ga$.
Therefore, $RT(\cT, \psi)<1$, and then we conclude the proof by Proposition \ref{prop:CHK4.2}.

\end{proof}

\begin{proposition}\label{prop:prepercolate}
Let $\cT$ be a locally finite, infinite tree with $\IBN(\cT)= \ga \in (0,1)$, and perform a quasi-independent percolation on the edges of $\cT$ such that \eqref{percprob:openedge} holds. Fix $z>1-\alpha$, and assume there exists $c>0$ and $n_0 \in \bbN$ such that for all $e$ with $ \vert e \vert > n_0$,
$$ \psi(e) \ge 1- c \vert e \vert^{-z}.$$
Then we have:
\begin{itemize}
\item[(1)] $RT(\cT, \psi)>1$ and $\cC(\varrho)$ is infinite with positive $\bbP-$probability.
\item[(2)] $\IBN(\cC(\varrho)) \ge 1-z$  with positive $\bbP-$probability.
\end{itemize}
\end{proposition}

\begin{proof}
We first deal with $(1)$. For $\pi\in \Pi$, let $\vert \pi \vert  \colonequals \inf \{ \vert  e\vert: \  e \in \pi \}$.
For any $\gb>1$, since $\psi(e)>0$ for all $e \in E$, we have
\begin{equation}
\inf_{\pi \in \Pi: \ \vert \pi \vert \le n_0} \sum_{e \in \pi} \prod_{g \le e} \psi(g)^{\gb}>0,
\end{equation}
and
\begin{multline}
\inf_{\pi \in \Pi: \ \vert \pi \vert>n_0} \sum_{e \in \pi} \prod_{g \le e} \psi(g)^{\gb}
\ge
c'\inf_{\pi \in \Pi: \ \vert \pi \vert>n_0} \sum_{e \in \pi} \prod_{g \le e} \left(1- c \vert g\vert^{-z} \right)^{\gb}
\\ \ge
c'\inf_{\pi \in \Pi: \ \vert \pi \vert>n_0} \sum_{e \in \pi} \exp \left( -2 c \gb \sum_{g \le e}  \vert g \vert^{-z} \right) \ge
c''\inf_{\pi \in \Pi: \ \vert \pi \vert>n_0} \sum_{e \in \pi} \exp \left( -\frac{  2c \gb}{1-z} \vert e \vert^{1-z} \right)>0
\end{multline}
where  the second inequality is by $1-x \ge e^{-2x}$ for all $x \in [0,1/2]$, and the last inequality is by $1-z<\ga$. Therefore, $RT(\cT, \psi)>1$. As the percolation is quasi-independent,  we conclude the proof for (1) by Proposition \ref{prop:CHK4.2}.\\

Now we move to (2). On the event $\{ \cC(\varrho) \text{ is infinite}\}$ which has positive $\bbP-$probability, we perform an independent percolation on the edges of $\cC(\varrho)$ on which an edge $e$ is open with probability $1-p \vert e\vert^{-z}$ independently. Let $\cC'(\varrho)$ denote the resulting cluster containing $\varrho$. Thus, it is a quasi-independent percolation on $\cT$ and
\begin{equation}
\bP \left( e \in \cC'(\varrho) \ \vert \ \cC'(\varrho)\right)
= \psi(e) (1- p\vert  e\vert^{-z}) \ge 1- (c+p) \vert e \vert^{-z}.
\end{equation}
We apply (1) to obtain that $\cC'(\varrho)$ is infinite with positive probability. By Corollary \ref{cor:subcritical}, we have
$\IBN(\cC(\varrho)) \ge 1- z$. Therefore, we conclude the proof.

\end{proof}

\subsection{Percolation linked to transience}\label{subsec:pertrans}
Given a realization of $(C_e)_{e \in E}$,    we define a percolation on the edges of $\cT$ as following:
$\{ e \text{ is open }\}$ holds almost surely for $\vert e \vert=1$, and for $\vert e\vert >1$
\begin{equation} \label{chooseE}
\left\{ e \text{ is open }\right\} \colonequals  \bigcap_{g \le e} \left\{ C_g^{-1} \le \exp \left(  \vert g \vert^{\gl}\right) \right\}.
\end{equation}
Moreover, we define
\begin{equation}\label{def:percomodel}
\psi_C(e) \colonequals
\begin{cases}
1 & \text { if } \vert e \vert=1,\\
\bbP \left( e \in \cC(\varrho) \ \vert \ e_0 \in \cC(\varrho) \right) & \text{ if } \vert e \vert>1,
\end{cases}
\end{equation}
where $e_0 \sim e$ and $e_0<e$.

\begin{proposition}\label{prop:percosupcritical}
The percolation defined in \eqref{def:percomodel} is quasi-independent. Moreover, for all $\gep \in(0, \ga/2)$ and $\gl=\ga-2\gep$, we have  $RT(\cT, \psi_C)>1$ and with positive $\bbP-$probability $\IBN(\cC(\varrho)) \ge \ga -\gep.$
\end{proposition}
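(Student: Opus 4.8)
The plan is to verify two things: quasi-independence of the percolation defined by \eqref{chooseE}–\eqref{def:percomodel}, and the lower bound $\IBN(\cC(\varrho)) \ge \ga - \gep$. For quasi-independence, fix edges $e_1, e_2$ with common ancestor $e^* \colonequals e_1 \wedge e_2$. The key observation is that the event $\{e \text{ is open}\}$ is of the product form $\bigcap_{g \le e}\{C_g^{-1} \le \exp(|g|^{\gl})\}$, so it is an \emph{increasing, cylinder-type} event determined by the conductances along $[\varrho, e]$. Conditioning on $\{e^* \in \cC(\varrho)\}$ fixes exactly the conductances on $[\varrho, e^*]$ to lie below certain thresholds; the events $\{e_1 \in \cC(\varrho)\}$ and $\{e_2 \in \cC(\varrho)\}$, conditionally on this, depend respectively on the (disjoint) edge-sets of the paths from $e^*$ to $e_1$ and from $e^*$ to $e_2$, together with the shared segment $[\varrho, e^*]$. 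The shared segment is the only source of dependence, and conditionally on $\{e^* \in \cC(\varrho)\}$ it contributes the \emph{same} factor to both; since all the conditional probabilities involved are products over thresholds on i.i.d.\ variables, one gets the quasi-independence inequality with constant $C_Q = 1$ (indeed, conditioning on $e^*\in\cC(\varrho)$, the events for $e_1$ and $e_2$ are genuinely independent because they involve disjoint i.i.d.\ variables). I would spell this out by writing $\bbP(e_i \in \cC(\varrho) \mid e^* \in \cC(\varrho)) = \prod_{g: e^* < g \le e_i} \bbP(C_g^{-1} \le \exp(|g|^{\gl}))$ and noting these factor cleanly.

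Next I would establish the lower bound on $\psi_C(e)$ needed to feed into Proposition \ref{prop:prepercolate}. By \eqref{asm:conduct}, for $|g| > n_0$ we have $\bbP(C_g^{-1} \le \exp(|g|^{\gl})) = 1 - L(|g|)|g|^{\gl - 1} \ge 1 - |g|^{\gl - 1 + \gep'}$ for any small $\gep' > 0$ and $|g|$ large, using that $L$ is slowly varying (so $L(t) \le t^{\gep'}$ eventually). Hence, for $|e| > n_0$,
\begin{equation*}
\psi_C(e) = \bbP\bigl(C_{e}^{-1} \le \exp(|e|^{\gl})\bigr) \ge 1 - |e|^{-(1 - \gl) + \gep'} = 1 - |e|^{-z}
\end{equation*}
with $z \colonequals 1 - \gl - \gep' = 1 - \ga + 2\gep - \gep'$. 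Choosing $\gep'$ small enough that $z > 1 - (\ga - \gep)$, i.e.\ $2\gep - \gep' > \gep$, i.e.\ $\gep' < \gep$, we are in the regime $z > 1 - \ga'$ where $\ga' \colonequals \ga - \gep$; note also $z < 1$ so the percolation is nontrivial. Now apply Proposition \ref{prop:prepercolate} with the tree $\cT$ (which has $\IBN(\cT) = \ga > \ga' $, but we only need the statement with parameter $1 - z$): part (1) gives $RT(\cT, \psi_C) > 1$ and $\cC(\varrho)$ infinite with positive probability, and part (2) gives $\IBN(\cC(\varrho)) \ge 1 - z = \ga - 2\gep + \gep' \ge \ga - \gep$ (adjusting constants: we actually get $\ge \ga - 2\gep + \gep'$, which exceeds $\ga - \gep$ precisely when $\gep' > \gep$—so I would instead pick $\gep'$ with $\gep < \gep' < 2\gep$, giving $z = 1 - \ga + 2\gep - \gep' \in (1-\ga, 1-\ga+\gep)$ and $1 - z = \ga - 2\gep + \gep' > \ga - \gep$; one must double-check $z > 1 - (\ga-\gep)$ still holds, which it does since $2\gep - \gep' < \gep$). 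The arithmetic bookkeeping of the $\gep$'s is the fiddly part but is routine.

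I expect the main obstacle to be the quasi-independence verification — specifically, being careful that conditioning on $\{e^* \in \cC(\varrho)\}$ does not secretly couple the two subtrees. The point to get right is that $\{e \in \cC(\varrho)\}$ depends \emph{only} on $\{C_g : g \le e\}$ and is an intersection of events each depending on a single $C_g$; since the edge sets $(e^*, e_1]$ and $(e^*, e_2]$ are disjoint and disjoint from $[\varrho, e^*]$, conditioning on the latter makes the former two conditionally independent, so the inequality holds with $C_Q = 1$. Everything else is bookkeeping with slowly varying functions and an application of the already-proved Proposition \ref{prop:prepercolate}. I would also double check that the hypotheses of Proposition \ref{prop:prepercolate} are met by $\psi_C$: it is a quasi-independent percolation, \eqref{percprob:openedge} holds by the definition \eqref{def:percomodel}, and the lower bound $\psi_C(e) \ge 1 - c|e|^{-z}$ was just shown with the $z$ in the required range.
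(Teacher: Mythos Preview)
Your approach is essentially identical to the paper's: quasi-independence via the product structure of the event $\{e\in\cC(\varrho)\}=\bigcap_{g\le e}\{C_g^{-1}\le\exp(|g|^\gl)\}$ (giving $C_Q=1$), followed by a lower bound on $\psi_C$ from the slowly-varying tail and an application of Proposition~\ref{prop:prepercolate}.

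The only issue is your $\gep$-bookkeeping, which is unnecessarily tangled and contains a small error. The hypothesis of Proposition~\ref{prop:prepercolate} is $z>1-\IBN(\cT)=1-\ga$, not $z>1-(\ga-\gep)$; you impose the latter at first, then drop it, then try to ``double-check'' it at the end and claim it holds ``since $2\gep-\gep'<\gep$'' --- but that inequality is exactly equivalent to $z<1-(\ga-\gep)$, so the check actually fails. Fortunately this constraint was never needed. The paper sidesteps all of this by simply using that $L(n)=o(n^{\gep})$ (i.e.\ taking your $\gep'$ equal to $\gep$), which gives directly
\[
1-\psi_C(e)=L(|e|)\,|e|^{-(1-\gl)}\le C\,|e|^{-(1-\gl)+\gep}=C\,|e|^{-(1-\ga+\gep)},
\]
so $z=1-\ga+\gep>1-\ga$ and $1-z=\ga-\gep$ on the nose. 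With that cleaner choice your argument goes through without the back-and-forth.
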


\begin{proof}[Proof of  Proposition \ref{prop:transience} from Proposition \ref{prop:percosupcritical}]
Let $\cC(\varrho) \colonequals (V_C,E_C)$ denote the graph of the cluster containing $\varrho$. For any cutset $\pi \subset E$, $\pi \cap E_C$ is a cutset of $\cC(\varrho)$, and let $\Pi_C$ denote the set of cutsets of $\cC(\varrho)$. Thus for $\delta>0$, we have
\begin{equation}
\inf_{\pi \in \Pi} \sum_{e \in \pi} \left( \sum_{g \le e} C_g^{-1} \right)^{-(1+\delta)}
 \ge \inf_{\pi \in \Pi_C} \sum_{e \in \pi} \left( \sum_{g \le e} C_g^{-1} \right)^{-(1+\delta)}
  \ge
c \inf_{\pi \in \Pi_C} \sum_{e \in \pi} \exp \left( -(1+\delta) \vert e \vert^{\ga-\frac{3\gep}{2}} \right)>0
\end{equation}
where the second inequality is by \eqref{chooseE} so that
$$ \sum_{g \le e} C_g^{-1} \le \vert e \vert \exp \left( \vert e \vert^{\gl} \right) \le C_1  \exp \left( 2\vert e \vert^{\gl} \right) \le C  \exp \left( \vert e \vert^{\ga- \frac{3\gep}{2}} \right)$$
 and the last inequality is by Proposition \ref{prop:percosupcritical}.
Therefore $RT(\cT, \psi_{RC}) \ge 1+ \delta$, and then we conclude the proof by Proposition \ref{prop:CKSthm} and Kolmogorov's $0-1$ law.

\end{proof}

\begin{proof}[Proof of Proposition \ref{prop:percosupcritical}]
We first show that the percolation is quasi-independent. Let $e_1,e_2 \in E$ with $e=e_1 \wedge e_2$, and we have
\begin{multline}
\bbP \left( e_1,e_2 \in \cC(\varrho) \ \vert \ e \in  \cC(\varrho) \right)= \bbP \left( \bigcap_{i=1}^2 \bigcap_{e<g \le e_i} \left\{ C_g^{-1} \le \exp \left( \vert  g\vert^{\gl} \right) \right\}  \right)\\
=\bbP \left( \bigcap_{e<g \le e_1} \left\{ C_g^{-1} \le \exp \left( \vert  g\vert^{\gl} \right) \right\}  \right) \bbP \left( \bigcap_{e<g \le e_2} \left\{ C_g^{-1} \le \exp \left( \vert  g\vert^{\gl} \right) \right\}  \right)\\
=\bbP \left( e_1 \in \cC(\varrho) \ \vert \ e \in  \cC(\varrho) \right)\bbP \left( e_1 \in \cC(\varrho) \ \vert \ e \in  \cC(\varrho) \right).
\end{multline}
Therefore, the percolation is quasi-independent.\\
Moreover, for $e \sim e_0$ and $e_0< e$, we have
\begin{multline}
1- \psi_C(e)=\bbP \left( e \not\in  \cC(\varrho) \ \vert \ e_0 \in \cC(\varrho) \right)
= \bbP \left( C_e^{-1}> \exp \left( \vert e \vert^{\gl} \right) \right)\\
=L(\vert e\vert) \cdot \vert  e\vert^{-(1-\gl)} \le C \vert e\vert^{-(1-\gl)+\gep}= C \vert e\vert^{-(1-\ga+\gep)}
\end{multline}
where the second last inequality holds for all $\vert e \vert >n_0$ with some $n_0 \in \bbN$ as $L(n)=o(n^{\gep})$. Then we conclude the proof  by Proposition \ref{prop:prepercolate}. 
\end{proof}

Combining Proposition \ref{prop:recurRWRC} with Proposition \ref{prop:transience},  we conclude the proof of Theorem \ref{th:RWRCIBN}.

\section{Proof of Theorem \ref{th:fireintermediate}}
\label{sec:fireIBN}

In this section we prove Theorem \ref{th:fireintermediate} that the Intermediate Branching Number is the critical threshold for the firefighter problem on trees. Our strategy follows the strategy in \cite{Lehner2019firefighting} for the (standard) branching number on exponential growth trees. We first state two Lemmas proved by Lehner. We include their (short) proofs in our notation for sake of completeness.

Let $\varrho \in V$ be rooted, and $\vert v\vert$ is the graph distance between $\varrho$ and $v$. The following lemma says that it is sufficient to consider the following initial fire set
\begin{equation}
B(k) \colonequals \left\{v \in V: \ \vert v\vert \le k \right\}, \quad k \in \bbN.
\end{equation}

\begin{lemma}[Lemma 2.1 \cite{Lehner2019firefighting}]\label{lema:initialball}
 The graph $\cG$ satisfies $g$-containment if and only if for all $k \in \bbN$, there is a containable map $\Upsilon_k$ for $B(k)$.

\end{lemma}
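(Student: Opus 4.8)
\textbf{Proof proposal for Lemma \ref{lema:initialball}.}

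The plan is to prove the two implications separately, with the substantive direction being that containability of all balls $B(k)$ implies $g$-containment of arbitrary finite fires. The forward direction is immediate: if $\cG$ is $g$-containable, then by definition every finite initial fire admits a containable map, and each $B(k)$ is a finite subset of $V$, so we are done.

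For the reverse direction, suppose that for every $k \in \bbN$ there is a containable map $\Upsilon_k$ for $B(k)$, and let $Z_0 \subset V$ be an arbitrary finite initial fire. Since $\cG$ is connected and locally finite, there is some $k$ with $Z_0 \subset B(k)$; I would then use the monotonicity of the fire spread with respect to the initial set. Concretely, one shows by induction on the round number $n$ that if $Z_0 \subset Z_0'$ are two initial fires and we apply the \emph{same} protection map $\Upsilon$ to both, then the fire sets satisfy $Z_n(Z_0,\Upsilon) \subset Z_n(Z_0',\Upsilon)$ at every round $n$: at round $n$ the new burning vertices are the unprotected neighbours of the previous fire, and this operation is monotone in the fire set while the protected set $W_n$ is the same on both sides. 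Hence applying the containable map $\Upsilon_k$ (which works for $B(k)$) to the smaller fire $Z_0$ yields $\bigcup_{n\ge 0} Z_n(Z_0,\Upsilon_k) \subset \bigcup_{n\ge 0} Z_n(B(k),\Upsilon_k)$, and the right-hand side is finite by the choice of $\Upsilon_k$, so the left-hand side is finite as well. Since $\#\Upsilon_k(n) \le g(n)$ for all $n$, this exhibits a containable map for $Z_0$ respecting the budget $g$, proving $g$-containment.

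The only step requiring care — and the ``main obstacle,'' though it is a mild one — is the monotonicity induction itself: one must check that when the same legal map $\Upsilon$ is used for both $Z_0$ and $Z_0'$, the protected sets $W_n$ are genuinely identical for both games (they depend only on $\Upsilon$, not on the fire), and that the spreading step $Z_{n-1}\mapsto Z_n$ is monotone under inclusion of $Z_{n-1}$ for a fixed $W_n$. Both are clear from the update rules in the definition of the firefighting game: $W_n = \bigcup_{i=1}^n S_i$ depends only on $\Upsilon$, and $Z_n = Z_{n-1}\cup\{v\in V\setminus W_n : \exists u\in Z_{n-1},\, u\sim v\}$ is visibly increasing in $Z_{n-1}$. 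A small subtlety is that $\Upsilon_k$ is only required to be \emph{legal} for $B(k)$, i.e.\ $S_n \subset V\setminus(Z_{n-1}(B(k))\cup W_{n-1})$; when replaying it against the smaller fire $Z_0$ some protected vertices may already be protected against vertices that never catch fire, but this does not violate legality for $Z_0$ (one may simply drop redundant protections, which only shrinks the fire further), so the replayed map is still legal and still respects the budget. This completes the argument.
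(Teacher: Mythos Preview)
Your proof is correct and follows the same approach as the paper: both arguments reduce to the monotonicity observation that if $Z_0' \subset Z_0$ and $\Upsilon$ contains $Z_0$, then $\Upsilon$ also contains $Z_0'$ because $Z_n' \subset Z_n$ for all $n$. The paper's proof is a two-line sketch of exactly this, while you have spelled out the induction and the legality check (which, incidentally, holds directly without needing to ``drop redundant protections'': since $Z_{n-1}(Z_0)\subset Z_{n-1}(B(k))$, the constraint $S_n\cap Z_{n-1}(B(k))=\emptyset$ already forces $S_n\cap Z_{n-1}(Z_0)=\emptyset$).
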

\begin{proof}
Observe that if $Z_0' \subset Z_0$ and $\Upsilon$ is containable for $Z_0$, then $\Upsilon$ is also containable for $Z_0'$ since $Z_n' \subset Z_n$. Due to this monotonicity, we conclude the proof.
\end{proof}

When $\cG$ is a tree, we can go one step further due to the structure of trees.
Given $V_0 \subset V$, let $\cG-V_0$ be the (connected/disconnected) graph obtained by deleting all the edges whose further endpoints are in $V_0$.
 For a subset $U \subset V$, we say that $U$ is a surrounding set for $Z_0$ if $U \cap Z_0= \emptyset$, $U$ is a cutset and
 $Z_0$ is a subset of the finite component of $\cG-U$. Observe that a containable map $\Upsilon$ for $Z_0$ provides a surrounding set $U$ which is the set of those vertices with one neighbor in $Z_{\infty}=\cup_{n=0}^{\infty}Z_n$ and $U \cap Z_{\infty}=\emptyset$. Conversely, if there is a legal map $\Upsilon$ protect a sounding set $U$, then $\Upsilon$ is containable for $Z_0$. The following lemma gives a criterion to tell whether $\cG$ is $g$-containable.

\begin{lemma}[Lemma 2.2 \cite{Lehner2019firefighting}]\label{lema:ballcriterial}
 A locally finite, infinite tree $\cT$ is $g$-containable if and only if for each $B(k)$ with $k \in \bbN$ there exists a surrounding set $U$ for $B(k)$ and all $n \in \bbN$ such that
\begin{equation}\label{cond:surround}
\# U_{k+n}  \colonequals \# \left\{ v \in U: \ \vert v \vert  \le k+n \right\}  \le \sum_{i =1}^n g_i.
\end{equation}
\end{lemma}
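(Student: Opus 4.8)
The plan is to show both directions of the equivalence by translating between containable maps and surrounding sets, using the observation recorded just before the statement. For the forward direction, suppose $\cT$ is $g$-containable. By Lemma~\ref{lema:initialball} it suffices to treat the initial fires $B(k)$, so fix $k$ and let $\Upsilon$ be a containable map for $B(k)$ with $\#\Upsilon(n)\le g_n$ for all $n$. Let $Z_\infty=\cup_{n\ge 0}Z_n$, which is finite by containability, and let $U$ be the set of vertices with a neighbour in $Z_\infty$ and $U\cap Z_\infty=\emptyset$; as noted, $U$ is a surrounding set for $B(k)$. The key point is the bound \eqref{cond:surround}: every vertex $v\in U$ at distance $k+n$ from $\varrho$ must already have been protected by time $n$, because otherwise the fire, which reaches every unprotected vertex within graph distance $k+n$ of $B(k)$'s boundary, would reach $v$ — here the tree structure is essential, since in a tree the fire started on $B(k)$ reaches an unprotected vertex $v$ at exactly round $|v|-k$ unless it is blocked strictly before $v$ by a protected vertex, but such a blocking vertex is itself in $U$ and strictly closer to the root. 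Hence $U_{k+n}\subset W_n=\cup_{i=1}^n S_i$, and so $\#U_{k+n}\le\sum_{i=1}^n\#S_i\le\sum_{i=1}^n g_i$.

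For the converse, suppose that for each $k$ there is a surrounding set $U$ for $B(k)$ satisfying \eqref{cond:surround}. I would build a legal containing map $\Upsilon$ directly: process the vertices of $U$ in order of increasing distance to $\varrho$, and at round $n$ protect those vertices of $U$ at distance $k+n$ (more precisely, use \eqref{cond:surround} to schedule the protections greedily so that at most $g_n$ vertices of $U$ are protected in round $n$, deferring leftover ones to later rounds — this is possible exactly because of the inequality $\#U_{k+n}\le\sum_{i=1}^n g_i$, which guarantees the cumulative ``budget'' never falls behind the cumulative ``demand''). One must check the resulting map is legal, i.e.\ that each scheduled vertex is still unburnt when we protect it: since a vertex $v\in U$ at distance $k+n$ can only catch fire at round $|v|-k=n$ or later (the fire advances one step per round from $B(k)$), protecting it no later than round $n$ suffices, provided all of its ancestors in $U$ on the path from $B(k)$ are also protected in time — which holds by the induction on distance. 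Once all of $U$ is protected, $\Upsilon$ surrounds $B(k)$, hence is containable for $B(k)$; by Lemma~\ref{lema:initialball} this gives $g$-containability.

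I expect the main obstacle to be the scheduling/greediness argument in the converse: one needs the precise claim that a sequence of nonnegative integers $(d_n)$ (with $d_n=\#\{v\in U:|v|=k+n\}$) whose partial sums satisfy $\sum_{i\le n}d_i\le\sum_{i\le n}g_i$ can be ``rearranged forward in time'' into a schedule $(s_n)$ with $s_n\le g_n$, $\sum s_n=\sum d_n$, and with the timing constraint that the $d_n$ vertices due at round $n$ are all protected by round $n$. This is an elementary flow/Hall-type fact — define $s_n$ recursively as $\min(g_n,\ \text{outstanding demand})$ — but it is the one place where a genuine (if short) argument is needed rather than a direct translation. The rest is bookkeeping about how fire propagates in a tree, which is routine given the monotonicity already isolated in Lemma~\ref{lema:initialball} and the surrounding-set observation.
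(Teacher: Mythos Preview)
Your proposal is correct and follows essentially the same approach as the paper: in the forward direction you extract a surrounding set from a containable map (your boundary-of-$Z_\infty$ choice is in fact cleaner than the paper's $U=\cup_j W_j$, which would otherwise require the same observation to verify \eqref{cond:surround}), and in the converse you order $U$ by distance and greedily protect $g_n$ vertices per round, exactly as the paper does. One small wording slip: the greedy schedule \emph{advances} vertices to earlier rounds when budget allows rather than ``deferring leftover ones to later rounds'' --- a vertex at distance $k+n$ must be protected no later than round $n$, as your own legality check correctly states, and the partial-sum inequality guarantees there is never anything left over to defer.
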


\begin{proof} If $\cT$ is  $g$-containable, we take $Z_0=B(k)$ and the corresponding containable map $\Upsilon_k$ provides a sounding set $U$ for $B(k)$ by taking the protected vertices $U \colonequals \cup_{j=1}^{\infty}W_j$. From the definition of containable maps, we see that $U$ satisfies \eqref{cond:surround} for all $n \in \bbN$.

For each given $Z_0=B(k)$, now we assume that \eqref{cond:surround} is satisfied for all $n \in \bbN$.
We first order all the vertices in $U$ according to their distance to $\varrho$ such that vertices with smaller distance to $\varrho$ have higher priority (arbitrary order among vertices with the same distance to $\varrho$). The map $\Upsilon$ is defined as follows: $\Upsilon_k(1)$ is the set of the first $g_1$ highest priority vertices in $U$, and  $\Upsilon_k(n)$ is the set of the highest
$g_n$ priority vertices of $U \setminus \cup_{j=1}^{n-1} \Upsilon_k(j)$ for $n >1$. We can see that by \eqref{cond:surround}
 the map $\Upsilon_k$ is legal. Moreover, since $U$ is a surrounding set and protected by $\Upsilon$, then $\Upsilon$ is a containable map.
\end{proof}

\begin{proof}[Proof of Theorem \ref{th:fireintermediate} ]
We first show that $\gl_c \le \ga$. For any fixed $\gl \in (\ga,1)$, we define $g_n \colonequals \lfloor \exp \left( n^{\gl}\right) \rfloor$. For each fixed $B(k)$, we define
$$\gep=\exp \left( -k^{\gl}\right)-\exp \left( -(k+1)^{\gl} \right)>0. $$
Since $\gl>\ga$, by the definition of $\IBN$, there exists a cutset $\pi \subset E$ such that
$$ \sum_{e \in \pi} \exp \left( - \vert e\vert^{\gl} \right)< \gep. $$
Therefore, $\pi$ does not contain any edge in $B(k)$, \textit{i.e.} $\inf_{e \in \pi} \vert e \vert>k$. We define
\begin{equation}
U \colonequals \left\{v \in V: \ \exists e \in \pi, e^+=v \right\}
\end{equation}
which is a surrounding set for $B(k)$.  Recalling $U_n'=\{ v \in U: \ \vert v \vert=n\}$, we have
\begin{equation}\label{upbd:leveln}
\gep> \sum_{e \in \pi} \exp \left( -\vert e \vert^{\gl} \right)=\sum_{v \in U} \exp \left( -\vert v \vert^{\gl} \right)
\ge \sum_{v \in U_n'} \exp \left( -\vert v \vert^{\gl} \right)= \vert  U_n' \vert \exp \left(- n^{\gl} \right).
\end{equation}
Recalling $U_n= \{ v\in U: \ \vert v \vert \le n\}$, since $U_n= \emptyset$ for $n \le k$, by \eqref{upbd:leveln}, for $n \ge k+1$ we have
 $$ \vert U_n \vert =\sum_{j=k+1}^n \vert  U_j'\vert \le   \sum_{j=k+1}^n \left\lfloor \gep  \exp \left( j^{\gl} \right)  \right\rfloor  \le    \sum_{j=k+1}^n \left \lfloor \exp \left( (j-k)^{\gl} \right) \right\rfloor = \sum_{j=1}^{n-k} g_j$$
where we have used  $x^{\gl}+(1-x)^{\gl} \ge 1$ for all $x \in [0,1]$ in the  last inequality.
Since $k$ is arbitrary, we conclude the proof by Lemma \ref{lema:ballcriterial} and Lemma \ref{lema:initialball}.

\medskip

 Now we move to prove $\gl_c \ge \ga$. For any fixed $\gl \in (0, \ga)$, and any fixed $K>0$, we show that $\cT$ is not $g$-containable with $g_n \colonequals K \exp(n^{\gl})$.
 Taking $\gga \in (\gl, \ga)$, by the definition of $\IBN$ in \eqref{def:IBN}, we have
 \begin{equation}\label{def:gep}
 \gep \colonequals \inf_{\pi \in \Pi} \sum_{e \in \pi} \exp \left(- \vert e\vert^{\gga} \right)>0.
 \end{equation}
Moreover, we choose $k \in \bbN$ such that
\begin{equation}
 \sum_{n=k+1}^{\infty} n K \exp \left(-n^{\gga}+n^{\gl} \right)< \gep,
\end{equation}
which is possible as $\gl < \gga$.
For the chosen $k$ above, we claim that there is no surrounding set $U$ for $B(k)$ satisfying \eqref{cond:surround}. We argue by contradiction. Suppose there is one such $U$, and let
$\pi_U \colonequals \{ e \in E: \ e^+ \in U\}$. By \eqref{cond:surround} and $U_n'=\{v \in U: \ \vert v \vert=n \}$, we have
\begin{equation}\label{upbd:Un}
\#  U_n' \le \#  U_n  \le \sum_{j=1}^{n-k} g_j \le  n K \exp \left( n^{\gl} \right).
\end{equation}
As $\pi_U$ is a cutset, combining \eqref{def:gep} and \eqref{upbd:Un}, we have

\begin{multline}
\gep \le \sum_{e \in \pi_U} \exp \left(- \vert e\vert^{\gga} \right)= \sum_{v \in U} \exp \left(- \vert v\vert^{\gga} \right)= \sum_{n=k+1}^{\infty} \vert U_n' \vert \exp \left( - n^{\gga}\right)
\le \sum_{n=k+1}^{\infty} n K \exp\left( n^{\gl} \right)\exp \left( - n^{\gga}\right)< \gep
\end{multline}
which is a contradiction. By Lemma \ref{lema:ballcriterial}, we conclude the proof.

\end{proof}

\end{appendix}

\bibliographystyle{alpha}
\bibliography{library.bib}
\end{document}